\theoremstyle{plain}
\newtheorem{thm}{Theorem}[section]
\newtheorem{defn}[thm]{Definition}
\newtheorem{prop}[thm]{Proposition}
\newtheorem{cor}[thm]{Corollary}
\newtheorem{lem}[thm]{Lemma}
\theoremstyle{remark}
\newtheorem{rem}{Remark}
\numberwithin{equation}{section}
\DeclareMathOperator{\hdim}{\dim_H}
\newcommand{\rc}{\mathcal R}
\newcommand{\rca}{\mathcal R_{\alpha}}
\newcommand{\rcat}{\mathcal R_{\alpha}^\times}
\newcommand{\rci}{\mathcal R_{\alpha,i}}
\newcommand{\dist}{\mathrm{dist}}
\newcommand{\N}{\mathbb N}
\newcommand{\R}{\mathbb R}
\newcommand{\Z}{\mathbb Z}
\renewcommand{\hm}{\mathcal H}
\newcommand{\nc}{\mathcal M_\infty}
\newcommand{\hc}{\mathcal H_\infty}
\newcommand{\cu}{\mathcal U}
\newcommand{\cq}{\mathcal Q}
\newcommand{\cw}{\mathcal W}
\newcommand{\ck}{\mathcal K}
\newcommand{\ca}{\mathcal A}
\newcommand{\ch}{\mathcal H}
\newcommand{\cf}{\mathcal F}
\newcommand{\cg}{\mathcal G}
\newcommand{\scg}{\mathscr G}
\newcommand{\qaq}{\mathrm{\quad and\quad}}
\begin{document}
		\title[A unified approach to MTP and LIP]{A unified approach to mass transference principle and large intersection property}
	\author{Yubin He}

	\address{Department of Mathematics, Shantou University, Shantou, Guangdong, 515063, China}

	\email{ybhe@stu.edu.cn}

%
%

	\subjclass[2020]{11J83, 11K60}

	\keywords{Mass transference principle, Diophantine approximation, Hausdorff measure, large intersection property.}
	\begin{abstract}
	The mass transference
	principle, discovered by Beresnevich and Velani [Ann. of Math. (2), 2006], is a landmark result in Diophantine approximation that allows us to obtain the Hausdorff measure theory of $\limsup$ set. Another important tool is the notion of large intersection property, introduced and systematically studied by Falconer [J. Lond. Math. Soc. (2), 1994]. The former mainly focuses on passing between full (Lebesgue) measure and full Hausdorff measure statements, while the latter transfers full Hausdorff content statement to Hausdorff dimension. From this perspective, the proofs of the two results are similar but often treated in different ways.

	In this paper, we establish a general mass transference principle from the viewpoint of Hausdorff content, aiming to provide a unified proof for the aforementioned results. More precisely, this principle enables us to transfer the Hausdorff content bounds of a sequence of open sets $E_n$ to the full Hausdorff measure statement and large intersection property for $\limsup E_n$.  One of the advantages of our approach is that the verification of the Hausdorff content bound does not require the construction of Cantor-like subset, resulting in a much simpler proof. As an application, we provide simpler proofs for several mass transference principles.
	\end{abstract}
	\maketitle
	\setcounter{tocdepth}{1}
   \tableofcontents
\section{Introduction}\label{s:intro}
Throughout, let $(X,\dist)$ be a compact metric space. Let $\{E_n\}_{n\ge 1}$ be a sequence of open subsets in $X$. Define the $\limsup$ set
\[\limsup_{n\to\infty} E_n=\bigcap_{N=1}^\infty\bigcup_{n=N}^\infty E_n.\]
In Diophantine approximation, many sets of interest can be written as a $\limsup$ set defined by a  sequence of proper subsets $\{E_n\}_{n\ge 1}$. A fundamental problem in the theory of metric Diophantine approximation is to determine the `size' of such $\limsup$ set in terms of Lebesgue measure, Hausdorff dimension and Hausdorff measure. When the sequence $\{E_n\}_{n\ge 1}$ is a collection of balls, the mass transference principle (MTP) discovered by Beresnevich and Velani \cite{BV06} is a remarkable result which allows us to pass between Lebesgue measure and Hausdorff measure statements for related $\limsup$ sets. Since then, there has appeared quite a substantial number of works building
and generalizing their result. See for example \cite{AB19,AB18,BV06,HS19,KR21,Pe22,PR15,WW21,WWX15,WZ21,Zh21} and reference therein. For an overview of the research on MTP, we refer to a recent survey article \cite{AD23} by Allen and Daviaud.

In \cite{Fal94}, Falconer introduced the notion of large intersection property which aimed at describing the size of $\limsup$ sets from another point of view. Roughly speaking, for a sequence of $G_\delta$-sets $\{A_n\}_{n\ge 1}$, if each $A_n$ has Hausdorff dimension at least $s$ and large intersection property, then so does their intersection $\bigcap_{n\ge 1} A_n$. Since many $\limsup$ sets of interest are $G_\delta$-sets, Falconer's large intersection property asserts that their intersection is large in the sense of Hausdorff dimension. However, a limitation of this property is that it only provides information about Hausdorff dimension, rather than Hausdorff measure. So in many cases, the proofs of full Hausdorff measure statement (the aim of MTP) and large intersection property for $\limsup$ sets are often established separately, although intuitively one would expect that there is a connection between them.

The main purpose of this paper is to present a unified but simpler proof of these two statements from the perspective of Hausdorff content. Our result can be interpreted as the full Hausdorff content statement implies both the full Hausdorff measure statement and the large intersection property for $\limsup$ sets. As an application, we verify that several well-known $\limsup$ sets, which naturally appear in Diophantine approximation, satisfy the full Hausdorff content statement. It turns out that our approach can give simpler and clearer proofs of the results in \cite{AB19,BV06,GN24,KR21,WW21,WZ21,Zh21}, and the conclusions are stronger. An interesting phenomenon is that MTP with ``local scaling property'' by Allen and Baker \cite{AB19} can be viewed as a special case of MTP ``from balls to open sets'' (See Corollary \ref{c:implied}).


We detail a few key papers that are particularly relevant to this work, and outline some important recent results. To begin with, we gather some necessary notation that will be used throughout.

By $a\lesssim b$ we mean there is an unspecified constant $c$ such that $a\le cb$. By $a\asymp b$ we mean $a\lesssim b$ and $b\lesssim a$. An open ball with center $x\in X$ and radius $r>0$ is denoted by $B(x,r)$. Occasionally, the radius of a ball $B$ in $X$ will not be explicitly written, and we use the notation $r(B)$ to denote its radius. For any $t>0$, we denote by $tB$ the ball $B$ scaled by a factor $t$; i.e. $tB:=B(x,tr)$.

Let $f\colon \R^+\to\R^+$ be a {\em dimension function}, i.e.\,$f(r)$ is a continuous, nondecreasing function defined on $\R^+$ and satisfying $f(0)=0$. We say that a function $f\colon \R^+\to\R^+$ is {\em doubling} if there exists a constant $D>1$ such that
\begin{equation}\label{eq:doubling}
	f(2r)<D f(r)\qquad\text{for all $r>0$}.
\end{equation}
 Let $f$ be a dimension function. For a set $A\subset X$ and $\eta>0$, let
\[\mathcal H_\eta^f(A)=\inf\bigg\{\sum_{i}f(|B_i|):A\subset \bigcup_{i\ge 1}B_i, \text{ where $B_i$ are balls with $|B_i|\le \eta$}\bigg\},\]
where $|\cdot|$ denotes the diameter of a set.
The {\em Hausdorff $f$-measure} of $A$ is then defined  as
\[\hm^f(A):=\lim_{\eta\to 0}\mathcal H_\eta^f(A).\]
When $\eta=\infty$, $\hc^f(A)$ is referred to as {\em Hausdorff $f$-content} of $A$. If $f(r)=r^s$, then we write $\hm^s$ and $\hc^s$ in place of $\hm^f$ and $\hc^f$, respectively.

Following \cite{Bu04}, for two dimension functions $f$ and $h$, by $h\preceq f$ we mean that
\[r\mapsto \frac{h(r)}{f(r)}\text{ is monotonic and } \lim_{r\to 0^+}\frac{h(r)}{f(r)}>0.\]
The monotonicity in the definition is crucial in the current work. The limit could be finite or infinite. If the limit is infinite, then we write $h\prec f$ instead of $h\preceq f$.

\begin{defn}[$g$-Ahlfors regular]
	Let $g$ be a dimension function. The Hausdorff measure $\hm ^g$ is said to be $g$-Ahlfors regular if $g$ is doubling and
	\[\hm^{g}\big(B(x,r)\big)\asymp g(r)\]
	for all $x\in X$ and $r>0$.
\end{defn}
\begin{rem}
	For our purpose, $g$ is required to be doubling here. If $g(r)=r^\delta$,  the Hausdorff measure $\hm^g$ is also referred to as $\delta$-Ahlfors regular.
\end{rem}

Throughout this paper, we assume that the metric space $(X,\dist)$ supports a $g$-Ahlfors regular measure $\hm^g$. Normally, this will not be written explicitly unless we take $g(r)=r^\delta$ for some $\delta>0$.

\subsection{MTP from balls to open sets}
The following principle was established by Beresnevich and Velani in \cite[Theorem 2]{BV06}.
\begin{thm}[{\cite[Theorem 2]{BV06}}]\label{t:BV}
	Let $f$ be a dimension function such that $f\preceq g$. Assume that $\{B(x_n,r_n)\}_{n\ge 1}$ is a sequence of balls in $X$ with $r_n\to 0$ as $n\to\infty$. If
	\[\hm^g\Big(\limsup_{n\to\infty}B\big(x_n,g^{-1}f(r_n)\big)\Big)=\hm^g(X),\]
	then,
	\[\hm^f\Big(\limsup_{n\to\infty}B(x_n,r_n)\Big)=\hm^f(X).\]
\end{thm}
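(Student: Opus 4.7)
My plan is to derive Theorem~\ref{t:BV} as a direct application of the general Hausdorff-content-based principle established in this paper: once the appropriate local $\hc^f$-lower bound for $\limsup_n B(x_n, r_n)$ is verified, the principle automatically upgrades it to the full $\hm^f$-measure statement (and to the large intersection property), bypassing any Cantor-type construction. Thus the proof reduces to establishing, for every ball $B \subset X$, a local content bound of the form
\[
	\hc^f\bigl(B \cap \limsup_{n\to\infty} B(x_n, r_n)\bigr) \;\gtrsim\; g(r(B)) \;\asymp\; \hm^g(B),
\]
where the right-hand side is the natural ambient scale against which the hypothesis on $\limsup B(x_n, g^{-1}f(r_n))$ is calibrated.

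To verify this content bound, I fix $B$, take an arbitrary ball cover $\{C_j\}_{j \geq 1}$ of $B \cap \limsup B(x_n, r_n)$ (by balls of small diameter), and aim for $\sum_j f(|C_j|) \gtrsim g(r(B))$. The key device is a ball replacement: for each $j$ I introduce a concentric ball $C_j^\star$ of radius $r(C_j^\star) := g^{-1}\bigl(f(|C_j|)\bigr)$, up to a fixed constant factor absorbing triangle-inequality slack. The hypothesis $f \preceq g$ makes $g^{-1} \circ f$ a well-behaved monotonic modulus, and by construction $g(r(C_j^\star)) \asymp f(|C_j|)$. The heart of the matter is showing that the enlarged family $\{C_j^\star\}$ covers $B \cap \limsup_n B(x_n, g^{-1}f(r_n))$ up to a $\hm^g$-null set: any point $y$ of the latter lies in infinitely many inflated balls $B(x_n, g^{-1}f(r_n))$, and matching each such $n$ with a $C_j$ from the original cover that witnesses the inner $\limsup$ near $x_n$, a triangle-inequality argument exploiting the monotonicity of $g^{-1}\circ f$ (so that $g^{-1}(f(|C_j|))$ dominates $g^{-1}f(r_n)$ whenever $|C_j| \geq r_n$) places $y$ inside $C_j^\star$. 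Combined with the full $g$-measure hypothesis and $g$-Ahlfors regularity, this chain yields
\[
	g(r(B)) \asymp \hm^g(B) \leq \sum_j \hm^g(C_j^\star) \asymp \sum_j g(r(C_j^\star)) \asymp \sum_j f(|C_j|),
\]
and taking infimum over covers gives the desired content bound.

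The main technical obstacle is the covering step. The original cover $\{C_j\}$ is only guaranteed to cover the inner $\limsup$ with the smaller radii $r_n$, not the outer $\limsup$ with the radii $g^{-1}f(r_n)$, which can be substantially larger; rigorously matching each relevant $n$ with a suitable $C_j$ and verifying that $C_j^\star$ captures the corresponding point of $B(x_n, g^{-1}f(r_n))$ is the delicate part, and hinges on partitioning the cover by scale relative to the $r_n$ and systematically exploiting the monotonicity inherent in $f \preceq g$. A secondary issue is handling anomalously large $C_j$ whose diameter exceeds every relevant $r_n$: these must be pruned beforehand, otherwise the replacement degenerates; fortunately they are easy to dispose of, since a single such ball already dominates the target $g(r(B))$ in $f$-mass.
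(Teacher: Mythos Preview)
Your high-level plan---reduce to a local $\hc^f$-bound and then invoke the paper's principle (Theorem~\ref{t:LIPfull})---is exactly the paper's philosophy. However, the paper does \emph{not} establish the content bound for $\limsup_n B(x_n,r_n)$ by a direct covering/replacement argument. Instead it feeds the data into Theorem~\ref{t:MTPBtO}: set $B_n=B(x_n,g^{-1}f(r_n))$ (the enlarged balls, with full $\hm^g$-limsup by hypothesis) and $E_n=B(x_n,r_n)\subset B_n$. The hypothesis \eqref{eq:contentbound} is then a one-line check: by $g$-Ahlfors regularity and $f\preceq g$ one has $\hc^f\big(B(x_n,r_n)\big)\asymp f(r_n)=g\big(g^{-1}f(r_n)\big)\asymp\hm^g(B_n)$. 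Theorem~\ref{t:MTPBtO} (whose proof is the $K_{G,B}$-lemma plus the multi-level measure construction of Section~\ref{s:basic}) gives $\limsup_n B(x_n,r_n)\in\scg^f(X)$, and Theorem~\ref{t:LIPfull} yields $\hm^f(\limsup_n B(x_n,r_n))=\hm^f(X)$. The heavy lifting is done once, inside Theorem~\ref{t:MTPBtO}, not re-done for this particular application.

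Your proposed route---take an arbitrary cover $\{C_j\}$ of $K:=B\cap\limsup_n B(x_n,r_n)$, inflate to $C_j^\star$ of radius $\asymp g^{-1}f(|C_j|)$, and argue that $\{C_j^\star\}$ covers $B\cap\limsup_n B(x_n,g^{-1}f(r_n))$ up to $\hm^g$-null---has a genuine gap at the covering step. The cover $\{C_j\}$ is only required to cover the \emph{limsup set} $K$, not any individual ball $B(x_n,r_n)$ nor its centre $x_n$. For a point $y$ in the big limsup you need, for at least one $n$ with $y\in B(x_n,g^{-1}f(r_n))$, a $C_j$ that meets $B(x_n,r_n)$ and has $|C_j|\gtrsim r_n$. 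But it can happen that $B(x_n,r_n)\cap K=\emptyset$, in which case no $C_j$ is forced to lie near $x_n$; and nothing rules out this occurring for \emph{every} $n$ witnessing $y$ in the big limsup (the set of such ``bad'' indices can carry the full-measure big limsup on its own). Even when $B(x_n,r_n)\cap K\neq\emptyset$, the $C_j$'s covering that intersection may all have diameter $\ll r_n$, so the monotonicity inequality $g^{-1}f(|C_j|)\ge g^{-1}f(r_n)$ fails and the triangle-inequality step breaks. Your remark about ``partitioning the cover by scale relative to the $r_n$'' does not repair this: the obstruction is not about scale-sorting but about the possibility that the cover simply avoids the relevant small balls altogether. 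This is precisely why both the original Beresnevich--Velani argument and the paper's proof of Theorem~\ref{t:MTPBtO} proceed by \emph{constructing} a measure (on a carefully built subset of $\bigcup_{k\ge n}E_k\cap B$) rather than by manipulating an arbitrary cover.
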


	Theorem \ref{t:BV} is extended by Allen and Baker \cite{AB19} to limsup sets defined by neighbourhoods of sets satisfying a certain local scaling property.
	\begin{defn}[$\kappa$-scaling property]
		Given $0\le \kappa<1$ and a sequence of sets $\{\rc_\alpha:\alpha\in J\}$ in $X$ indexed by an infinite, countable set $J$, call $\{\rc_\alpha:\alpha\in J\}$ having $\kappa$-scaling property if for any $0<\eta\le r<|X|$, $\alpha\in J$ and $x\in \rca$, we have
		\[g(\eta)^{1-\kappa}\cdot g(r)^{\kappa}\asymp \hm^g\big(B(x,r)\cap \Delta(\rc_\alpha,\eta)\big),\]
		where $\Delta(\rca,\eta)$ is the $\eta$-neighbourhood of $\rca$ defined by
		\[\Delta(\rca,\eta):=\{x\in X:\dist(x,\rc_\alpha)<\eta\}.\]
	\end{defn}
	\begin{rem}
		The sets $\rc_\alpha$ will be referred to as {\em resonant sets}. Examples of sets satisfying the $\kappa$-scaling property include self-similar sets satisfying the open set condition and smooth compact manifolds embedded in $\R^n$. Roughly speaking, when $g(r)=r^\delta$, the quantities $\delta\kappa$ and $\delta(1-\kappa)$ represent the dimension and `codimension' of the resonant set $\rca$, respectively. We refer the reader to \cite[\S 2]{AB19} for more detailed arguments for why such sets satisfy $\kappa$-scaling property.
	\end{rem}

   Take $J=\N$ and let $
   \Upsilon=\{\Upsilon_n\}_{n\ge 1}$ be a sequence of non-negative reals such that $\Upsilon_n\to 0$ as $n\to\infty$. Consider the set
   \[\Lambda(\Upsilon):=\{x\in X:x\in \Delta(\rc_n,\Upsilon_n)\text{ for i.m.\,$n\in\N$}\},\]
   where i.m. stands for infinitely many.   \begin{thm}[{\cite[Theorem 1]{AB19}}]\label{t:mtpbtor}
   	Let $f\preceq g$ be a dimension function such that $f/g^\kappa$ is also a dimension function. If
   	\[\hm^g\bigg(\Lambda\bigg(g^{-1}\bigg(\bigg(\frac{f(\Upsilon)}{g(\Upsilon)^\kappa}\bigg)^{\frac{1}{1-\kappa}}\bigg)\bigg)\bigg)=\hm^g(X),\]
   	then
   	\[\hm^f\big(\Lambda(\Upsilon)\big)=\hm^f(X).\]
   \end{thm}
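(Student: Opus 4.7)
The plan is to apply the paper's general content-to-measure principle to the sequence of open sets $E_n:=\Delta(\rc_n,\Upsilon_n)$, for which $\Lambda(\Upsilon)=\limsup_n E_n$. That principle reduces the full Hausdorff $f$-measure conclusion to the local Hausdorff $f$-content bound $\hc^f(\Lambda(\Upsilon)\cap B)\gtrsim f(r(B))$ for every ball $B\subset X$; verifying this bound directly, without any Cantor-like subset construction, is the main work.

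Set $\eta_n:=g^{-1}\big((f(\Upsilon_n)/g(\Upsilon_n)^\kappa)^{1/(1-\kappa)}\big)$, so that $g(\eta_n)^{1-\kappa}g(\Upsilon_n)^\kappa=f(\Upsilon_n)$. The $\kappa$-scaling property then yields the crucial identity
\[\hm^g\big(B(x,\Upsilon_n)\cap \Delta(\rc_n,\eta_n)\big)\asymp f(\Upsilon_n)\qquad\text{for }x\in \rc_n,\]
and the hypothesis is exactly that $\limsup_n\Delta(\rc_n,\eta_n)$ has full $\hm^g$-measure. For a fixed ball $B$ and any $N\in\N$, $g$-Ahlfors regularity then forces $\bigcup_{n\ge N}\Delta(\rc_n,\eta_n)\cap \tfrac12 B$ to carry $g$-mass $\gtrsim g(r(B))$. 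For each such $n$, I would extract a maximal $\Upsilon_n$-separated family of balls $\{B(y_{n,i},\Upsilon_n)\}_i$ with centers $y_{n,i}\in\rc_n\cap\tfrac12 B$, so that the dilates cover $\Delta(\rc_n,\eta_n)\cap \tfrac12 B$; by the identity above, each $B(y_{n,i},\Upsilon_n)$ traps a set of $g$-mass $\asymp f(\Upsilon_n)$ lying inside $\Lambda(\Upsilon)$.

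The main obstacle is the weight comparison: for any cover $\{B_j\}$ of $\Lambda(\Upsilon)\cap B$ by balls of small diameter, one must deduce $\sum_j f(|B_j|)\gtrsim f(r(B))$. I would localize the cover inside each $B(y_{n,i},\Upsilon_n)$ and split by scale, $|B_j|\le \Upsilon_n$ versus $|B_j|>\Upsilon_n$; the monotonicity built into $f\preceq g$ forces $f(|B_j|)/g(|B_j|)$ to lie on one side of $f(\Upsilon_n)/g(\Upsilon_n)$, and together with $g$-Ahlfors regularity a standard Vitali-type argument then yields $\sum_j f(|B_j|)\gtrsim f(\Upsilon_n)$ per ball $B(y_{n,i},\Upsilon_n)$. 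Summing over $i$ provides a factor $\asymp g(r(B))/g(\Upsilon_n)$, and a final comparison of $f(\Upsilon_n)/g(\Upsilon_n)$ with $f(r(B))/g(r(B))$ via the same monotonicity produces $\sum_j f(|B_j|)\gtrsim f(r(B))$, completing the content bound and hence, through the unified principle, the theorem.
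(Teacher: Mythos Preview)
Your proposal has the two key scales reversed, and this causes the argument to collapse at the ``crucial identity''. With $\eta_n=g^{-1}\big((f(\Upsilon_n)/g(\Upsilon_n)^\kappa)^{1/(1-\kappa)}\big)$ one has $\eta_n\ge\Upsilon_n$ in the interesting case $f\prec g$, so for $x\in\rc_n$ the ball $B(x,\Upsilon_n)$ is entirely contained in $\Delta(\rc_n,\eta_n)$ and hence $\hm^g\big(B(x,\Upsilon_n)\cap\Delta(\rc_n,\eta_n)\big)\asymp g(\Upsilon_n)$, not $f(\Upsilon_n)$. The $\kappa$-scaling property is only informative when the neighbourhood radius is the \emph{small} one; the correct pairing is a ball of radius $\eta_n$ intersected with the \emph{thin} tube $\Delta(\rc_n,\Upsilon_n)$. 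A second, more serious gap: you assert that each $B(y_{n,i},\Upsilon_n)$ traps $g$-mass $\asymp f(\Upsilon_n)$ \emph{inside $\Lambda(\Upsilon)$}. But $\Lambda(\Upsilon)$ is typically $\hm^g$-null---indeed that is why one needs MTP at all---so no such $g$-mass statement can hold. Consequently the covering/weight comparison you sketch has nothing to grab onto.

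The paper's route is both different and simpler: it does not attempt to bound $\hc^f(\Lambda(\Upsilon)\cap B)$ directly. Instead it covers each $\Delta(\rc_n,\eta_n)$ by balls $5B$ of radius $5\eta_n$ (via the $5r$-lemma), obtaining a sequence of balls whose $\limsup$ has full $\hm^g$-measure; it then sets $E_B:=5B\cap\Delta(\rc_n,\Upsilon_n)\subset 5B$ and verifies the single, local estimate $\hc^f(E_B)\gtrsim\hm^g(5B)$ by a three-case mass distribution argument with the normalised measure $\hm^g|_{E_B}/\hm^g(E_B)$. This feeds directly into Theorem~\ref{t:MTPBtO}, which already encapsulates the Cantor-type construction you are implicitly trying to redo. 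So the ``no Cantor-like subset'' claim in your plan is illusory: bounding $\hc^f$ of the limsup set itself would force you to build such a subset, whereas the paper offloads that work to the already-proven Theorem~\ref{t:MTPBtO} and reduces the present theorem to a one-scale content estimate on the elementary pieces $E_B$.
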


   Further improvement was given by Koivusalo and Rams \cite{KR21} and Zhong \cite{Zh21}. The former only dealed with the Hausdorff dimension and the latter complement the Hausdorff measure theory.

   \begin{thm}[{\cite[Theorem 3.1]{KR21}} and {\cite[Theorem 1.6]{Zh21}}]\label{t:BtO}
   	Let $\{B_n\}_{n\ge 1}$ be a sequence of balls in $[0,1]^d$ with radii tending to 0. Let $g(r)=r^d$ and let $f$ be a dimension function such that $f\preceq g$.  Let $\{E_n\}_{n\ge 1}$ be a sequence of open sets satisfying $E_n\subset B_n$.  Assume that $\hm^g(\limsup B_n)=\hm^g([0,1]^d)$, and that there exists $c>0$ such that $\hc^f(E_n)>c\hm^g(B_n)$ for all $n\ge 1$. Then, one has
   	\[\hm^f\Big(\limsup_{n\to\infty}E_n\Big)=\hm^f([0,1]^d).\]
   \end{thm}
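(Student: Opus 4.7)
The plan is to prove the theorem by carrying out a Cantor-like construction inside a generic ball $B\subset[0,1]^d$ and then invoking the mass distribution principle. A standard density/covering argument reduces the global conclusion $\hm^f(\limsup E_n)=\hm^f([0,1]^d)$ to the local estimate
\[\hm^f\Big(B\cap\limsup_{n\to\infty}E_n\Big)\gtrsim f\bigl(r(B)\bigr)\]
for every sufficiently small ball $B$, so I would work inside a single such $B$.

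The first step is to turn the content bound $\hc^f(E_n)>c\,\hm^g(B_n)\asymp r(B_n)^d$ into a finite collection of disjoint sub-balls of comparable total $f$-mass. By the definition of Hausdorff content, every countable cover of $E_n$ by balls $\{B_i\}$ satisfies $\sum_i f(|B_i|)\ge c'\,r(B_n)^d$, and applying a $5r$-covering lemma to an almost-optimal such cover extracts a finite family $\{B_{n,j}\}_j$ of pairwise disjoint balls inside $E_n$ with
\[\sum_j f\bigl(r(B_{n,j})\bigr)\gtrsim r(B_n)^d,\qquad r(B_{n,j})\le r(B_n).\]
This replaces the Cantor-like subset of $E_n$ one would otherwise build by hand.

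Combining this with the full $g$-measure hypothesis $\hm^g(\limsup B_n)=\hm^g([0,1]^d)$, a routine ubiquity argument produces, inside any ball $B'$ arising in the construction, a finite disjoint subfamily of $\{B_n\}$ with total $g$-mass comparable to $g(r(B'))$. Replacing each such $B_n$ by its sub-balls from the previous paragraph yields a disjoint finite family of balls inside $B'\cap\bigcup_n E_n$ of total $f$-mass $\gtrsim g(r(B'))$. Iterating this at successively smaller scales produces nested compact families whose intersection $K$ lies in $B\cap\limsup E_n$. Defining a probability measure $\mu$ on $K$ by equidistributing with weights proportional to $f(r(\cdot))$ at each generation, one aims to invoke the mass distribution principle.

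The hard part will be the Frostman-type estimate $\mu(B(x,r))\lesssim f(r)$. At scales matching a generation of the construction, this is immediate from the defining $f$-mass identities. The delicate case is \emph{interpolating} between two consecutive generations, where I would exploit the monotonicity of $f/g$ furnished by $f\preceq g$ to compare $f(r)$ with the total $f$-weight of generation balls meeting $B(x,r)$, whose $g$-packing is controlled by $g$-Ahlfors regularity. A nontrivial amount of bookkeeping is also required to enforce sufficient scale separation between consecutive generations so that contributions from different branches of the Cantor tree do not pile up; this is where I expect the proof to be most technical.
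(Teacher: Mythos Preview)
Your outline follows the classical Cantor-tree route, but as stated it has a genuine gap. Inside a generation ball $B'$, the balls you produce at the next generation have total $f$-mass only $\asymp g(r(B'))$: you first pack $B'$ by $B_n$'s with $\sum\hm^g(B_n)\asymp g(r(B'))$ and then extract from each $E_n$ sub-balls with $\sum_j f(r(B_{n,j}))\asymp\hc^f(E_n)\asymp\hm^g(B_n)$. When $f\prec g$ this forces the $f$-mass to decay strictly across generations, so the Frostman constant obtainable from the mass distribution principle tends to $0$ with the depth, and you get neither $\hc^f(K)\gtrsim f(r(B))$ nor even a uniform positive lower bound. The standard remedy, already in Beresnevich--Velani, is to pack each $B'$ with $\asymp f(r(B'))/g(r(B'))$ well-separated \emph{sublevels}, which restores the $f$-mass balance; your sketch does not mention this, and without it the local estimate you aim for does not follow. (A smaller issue: applying $5r$ to a near-optimal \emph{cover} of $E_n$ does not land you inside $E_n$; use instead the balls $B(x,\tfrac12\dist(x,E_n^c))$ for $x\in E_n$, then apply $5r$ and the doubling of $f$.)

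The paper sidesteps the nested construction altogether. It proves directly the content bound $\hc^f(F_n\cap B)\gtrsim f(|B|)$ for the tails $F_n=\bigcup_{k\ge n}E_k$, via a \emph{finite, non-nested} construction: $l_B\asymp f(|B|)/g(|B|)$ layers $K(1),\dots,K(l_B)$ inside $B$, each a well-separated union of pieces $L_k\subset E_k$, with the Frostman measures $\nu_k$ on $L_k$ (supplied by Proposition~\ref{p:frost} from $\hc^f(L_k)\asymp\hm^g(B_k)$) averaged uniformly over layers. The resulting Frostman analysis has only two cases (one versus several pieces per layer meeting $B(x,r)$), and Theorem~\ref{t:LIPfull} then converts the content bound into full $\hm^f$-measure while simultaneously yielding $\limsup E_n\in\scg^f(X)$. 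In effect the sublevel idea is still there, but applied once at the scale of $B$ rather than recursively at every node of a Cantor tree; this is exactly what lets the paper dispense with the interpolation bookkeeping you anticipate.
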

   \begin{rem}
   	In the next section, we will state a metric space version of Theorem \ref{t:BtO}, and subsequently show that this general version indeed implies Theorem \ref{t:mtpbtor} (see Section \ref{s:implied} for the proof).
   \end{rem}

   The results presented in this subsection, give us MTP from ``balls to open sets". Generally speaking, if there is a full (Lebesgue) measure statement for a $\limsup$ set defined by balls, then there will be a full Hausdorff measure statement for the $\limsup$ set defined by shrinking the balls to open sets. A shortage of these principles is that the initial $\limsup$ set with full measure is required to be defined by balls. However, many $\limsup$ sets with full measure that appear in the weighted Diophantine approximation are defined by rectangles instead of balls, so these principles do not apply. To deal with this problem, Wang and Wu \cite{WW21} developed the transference principle from rectangles to rectangles.
\subsection{MTP from rectangles to rectangles}

Fix an integer $d\ge 1$. For each $1\le i\le d$, let $(X_i,\dist_i,m_i)$ be a compact metric space with $m_i$ a $\delta_i$-Ahlfors regular measure. Consider the product space $(X^\times,\dist^\times,m^\times)$, where
\begin{equation}\label{eq:specialmetri}
	X^\times=\prod_{i=1}^{d}X_i,\quad m^\times=\prod_{i=1}^dm_i,\quad \dist^\times=\max_{1\le i\le d}\dist_i.
\end{equation}
We add the symbol $\times$ in the superscript  to emphasize the product property. So a ball $B(x,r)$ in $X^\times$ is in fact the product of balls in $\{X_i\}_{1\le i\le d}$, i.e.
\[B(x,r)=\prod_{i=1}^dB(x_i,r),\quad\text{for $x=(x_1,\dots, x_d)\in X^\times$}.\]

Let $J$ be an infinite countable index set. Let  $\beta\colon J\to \R^+:\alpha\mapsto\beta(\alpha)$ be a positive function such that for any $M>1$, $\{\alpha\in J:\beta(\alpha)<M\}$ is finite. Let $\{l_n\}_{n\ge 1}$ and $\{u_n\}_{n\ge 1}$ be two sequences such that
\[l_n<u_n\qaq\lim_{n\to\infty}l_n=\infty.\]
Define
\begin{equation}\label{eq:Jn}
	J_n=\{\alpha\in J:l_n< \beta(\alpha)\le u_n\}.
\end{equation}
By the assumption on the function $\beta$, the cardinality of $J_n$ is finite. Let $\rho\colon\R^+\to\R^+$ be non-increasing and $\rho(u)\to 0$ as $u\to\infty$.

For each $1\le i\le d$, let $\{\rci:\alpha\in J\}$ be a sequence of subsets of $X_i$.
 Define
\[\bigg\{\rcat=\prod_{i=1}^d\rci:\alpha\in J\bigg\}.\]
For any $\alpha\in J$, $r>0$ and $\bm a=(a_1,\dots, a_d)\in (\R^+)^d$, denote
\[\Delta(\rcat,r^{\bm a})=\prod_{i=1}^d\Delta(\rci,r^{a_i}),\]
where $\Delta(\rc_{\alpha,i},r^{a_i})$ is the neighborhood of $\rci$ in $X_i$ and call it the part of $\Delta(\rcat,r^{\bm a})$ in the $i$th direction.

Fix $\bm a=(a_1,\dots, a_d)\in (\R^+)^d$. For $ \bm t=(t_1,\dots,t_d)\in(\R^+)^d$, define
\[W(\bm t)=\big\{x\in X^\times:x\in\Delta\big(\rcat,\rho(u_n)^{\bm a+\bm t}\big),\text{ $\alpha\in J_n$, for i.m.\,$n$}\big\},\]
where $u_n$ is corresponding to $\alpha\in J_n$ given in \eqref{eq:Jn}.
More generally, one can replace $\bm t$ by a sequence $\{\bm t_n\}_{n\ge 1}$ of vectors in $(\R^+)^d$ with $ \bm t_n=(t_{n,1},\dots,t_{n,d})$, and consider the set
\[W\big(\{\bm t_n\}\big)=\big\{x\in X^\times:x\in\Delta\big(\rcat,\rho(u_n)^{\bm a+\bm t_n}\big),\text{ $\alpha\in J_n$, for i.m.\,$n$}\big\}.\]

Inspired by the notions of local ubiquitous systems defined by Beresnevich, Dickinson and Velani \cite{BDV06}, Wang and Wu \cite{WW21} defined the natural corresponding notions of local ubiquity for rectangles.
\begin{defn}[Local ubiquity system for rectangles]
	Call $(\{\rcat\}_{\alpha\in J},\beta)$ a local ubiquity system for rectangles with respect to $(\rho,\bm a)$ if there exists a constant $c>0$ such that for any ball $B$ in $X^\times$,
	\[\limsup_{n\to\infty} m^\times\bigg(B\cap\bigcup_{\alpha\in J_n}\Delta \big(\rcat, \rho(u_n)^{\bm a}\big)\bigg)>cm^\times(B).\]
\end{defn}
\begin{defn}[Uniform local ubiquity system for rectangles]
	Call $(\{\rcat\}_{\alpha\in J},\beta)$ a uniform local ubiquity system for rectangles with respect to $(\rho,\bm a)$ if there exists a constant $c>0$ such that for any ball $B$ in $X^\times$,
	\[ \liminf_{n\to\infty}m^\times\bigg(B\cap\bigcup_{\alpha\in J_n}\Delta \big(\rcat, \rho(u_n)^{\bm a}\big)\bigg)>cm^\times(B).\]
\end{defn}
Assuming local ubiquity for rectangles, Wang and Wu  \cite{WW21} established the following MTP from rectangles to rectangles.
\begin{thm}[{\cite[Theorem 3.2]{WW21}}]\label{t:mRtoR}
	For $1\le i\le d$, assume the $\kappa_i$-scaling property for $\{\rci:\alpha\in J\}$. Assume the local ubiquity for rectangles. One has
	\[\hm^{s(\bm t)}\big(W(\bm t)\big)=\hm^s(X^\times),\]
	where
	\begin{equation*}
		\begin{split}
			s(\bm t):=\min_{\tau\in\ca}\bigg\{\sum_{i\in\ck_1(\tau)}\delta_i&+\sum_{i\in\ck_2(\tau)}\delta_i+\sum_{i\in\ck_3(\tau)}\delta_i\kappa_i\\
			&+\frac{\sum_{i\in\ck_3(\tau)}\delta_ia_i(1-\kappa_i)-\sum_{i\in\ck_2(\tau)}\delta_it_i(1-\kappa_i)}{\tau}\bigg\},
		\end{split}
	\end{equation*}
	and where
	\[\ca=\{a_i,a_i+t_i:1\le i\le d\}\]
	and for each $\tau\in\ca$, the sets $\ck_1(\tau),\ck_2(\tau)$ and $\ck_3(\tau)$ are defined as
	\[\ck_1(\tau):=\{i:a_i\ge \tau\},\quad\ck_2(\tau)=\{i:a_i+t_i\le \tau\}\setminus \ck_1(\tau),\]
	and
	\[\ck_3(\tau)=\{1,\dots,d\}\setminus\big(\ck_1(\tau)\cup \ck_2(\tau)\big).\]
\end{thm}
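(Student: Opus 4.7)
The plan is to reduce Theorem~\ref{t:mRtoR} to the Hausdorff-content MTP (the metric-space version of Theorem~\ref{t:BtO} that is announced after its statement). Let $\tau^*\in\ca$ attain the minimum in the definition of $s(\bm t)$. For each $n\ge 1$ and each $\alpha\in J_n$, I would take a maximal $\rho(u_n)^{\tau^*}$-separated subset of $\Delta(\rcat,\rho(u_n)^{\bm a+\bm t})$ and let $\{B_{n,\alpha,j}\}_j$ be the balls of radius $\rho(u_n)^{\tau^*}$ centered at those points. Setting $E_{n,\alpha,j}=B_{n,\alpha,j}\cap\Delta(\rcat,\rho(u_n)^{\bm a+\bm t})$ and reindexing the triples $(n,\alpha,j)$ by a single parameter $k$ produces $E_k\subset B_k$ with $\limsup_k E_k = W(\bm t)$ and $|B_k|\to 0$.

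\textbf{Full $g$-measure statement for $\limsup B_k$.} For each $n$, the union $\bigcup_{\alpha,j}B_{n,\alpha,j}$ contains the portion of the big neighborhood $\bigcup_\alpha\Delta(\rcat,\rho(u_n)^{\bm a})$ coming from the directions in $\ck_1(\tau^*)\cup\ck_3(\tau^*)$, and a bounded-multiplicity Vitali argument shows that its $m^\times$-density in any ball $B$ is at least a uniform constant times that of $\bigcup_\alpha\Delta(\rcat,\rho(u_n)^{\bm a})$. The local ubiquity for rectangles then yields
\[
\limsup_{n\to\infty} m^\times\Big(B\cap \bigcup_{\alpha\in J_n,\,j} B_{n,\alpha,j}\Big) > c\, m^\times(B)
\]
for every ball $B\subset X^\times$, and a standard Lebesgue-density argument for the $g$-Ahlfors regular measure $m^\times$ upgrades this to $\hm^g(\limsup_k B_k)=\hm^g(X^\times)$.

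\textbf{Content bound.} To establish $\hc^{s(\bm t)}(E_k)\gtrsim m^\times(B_k)$, I would invoke a Frostman-type construction with the measure $\mu_k=\big(m^\times(B_k)/m^\times(E_k)\big)\cdot m^\times|_{E_k}$, which has total mass $m^\times(B_k)$. Using the product structure and the $\kappa_i$-scaling of each factor, the $m^\times$-measure of $B\cap E_k$ decomposes into a product of slice measures that are controlled directly by the $\kappa_i$-scaling formula, so the required Frostman bound $\mu_k(B)\lesssim|B|^{s(\bm t)}$ for balls $B$ of radius $\rho(u_n)^{\tau}$ reduces to an inequality on exponents that must hold for every $\tau>0$. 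The partition of directions into $\ck_1(\tau),\ck_2(\tau),\ck_3(\tau)$ encodes the three regimes $\tau\le a_i$, $\tau\ge a_i+t_i$, and $a_i<\tau<a_i+t_i$ of how the covering ball sees the rectangle in direction $i$, and the minimizer $\tau^*$ is designed so that the inequality is saturated.

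\textbf{Main obstacle.} The delicate step is the content bound, and specifically the bookkeeping required to verify the Frostman estimate uniformly over all covering scales $\tau>0$ and all configurations of directions in the three regimes. This bookkeeping is exactly what forces the exponent $s(\bm t)$ to take the piecewise-linear-in-$1/\tau$ form displayed in the statement, with breakpoints at $\ca=\{a_i,a_i+t_i\}$. Once the content bound is in place, the content-version MTP applied to $\{(E_k,B_k)\}$ produces $\hm^{s(\bm t)}(W(\bm t))=\hm^{s(\bm t)}(X^\times)$ in a single application, bypassing the Cantor-type construction used by Wang--Wu in~\cite{WW21}—which is precisely the advertised advantage of the unified approach.
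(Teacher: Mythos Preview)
Your plan has a genuine gap in the full-measure step. You fix $\tau^*$ to be the minimizer of $s(\bm t)$ and take balls $B_{n,\alpha,j}$ of radius $\rho(u_n)^{\tau^*}$ centred in $\Delta(\rcat,\rho(u_n)^{\bm a+\bm t})$. In general $\tau^*>a_i$ for every $i\notin\ck_1(\tau^*)$, so $\rho(u_n)^{\tau^*}$ is much smaller than the corresponding side-lengths $\rho(u_n)^{a_i}$ of the original rectangles. Consequently the union $\bigcup_{\alpha,j}B_{n,\alpha,j}$ is only comparable to a $\rho(u_n)^{\tau^*}$-thickening of the shrunk set, not to the fat neighbourhood $\bigcup_\alpha\Delta(\rcat,\rho(u_n)^{\bm a})$ that local ubiquity controls; no Vitali or Lebesgue-density argument recovers the missing factor. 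Already the one-dimensional case $d=1$, $\kappa_1=0$, $a_1=1$, $t_1=1$, $\delta_1=1$ shows the failure: here $\tau^*=a_1+t_1=2$, your balls reproduce essentially $\bigcup_\alpha\Delta(\rca,\rho^2)$, and if the resonant points form (say) a $\rho$-net then the $m$-density of this union in a fixed ball $B$ is $\asymp\rho\cdot m(B)/m(B)=\rho\to 0$. Thus $\hm^g(\limsup_k B_k)=\hm^g(X^\times)$ cannot be established, and the reduction to Theorem~\ref{t:MTPBtO} collapses.

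The paper avoids this by not packaging the rectangles into balls at all. It invokes the weaker content criterion, Corollary~\ref{c:weaken}(1), which only requires $\limsup_n\hc^{s(\bm t)}(E_n\cap B)\gtrsim m^\times(B)$ for every ball $B$, where $E_n=\bigcup_{\alpha\in J_n}\Delta(\rcat,\rho(u_n)^{\bm a+\bm t})$. For a fixed $B$ it applies the rectangular $K_{G,B}$-lemma (Lemma~\ref{l:kgb2}, a direct consequence of local ubiquity) to extract, for infinitely many $n$, a $3r$-separated family $K_{G,B}$ of full rectangles $R_\alpha\subset B$ with total $m^\times$-measure $\asymp m^\times(B)$; the Frostman measure is then the normalised restriction of $m^\times$ to $\bigcup_{R_\alpha\in K_{G,B}}\big(R_\alpha\cap\Delta(\rcat,\rho(u_n)^{\bm a+\bm t})\big)$. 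The three-regime analysis you describe is carried out for this measure, but with the test radius $r$ ranging over all scales rather than pinned at $\rho(u_n)^{\tau^*}$; this produces $\mu(B(x,r))\lesssim r^{s(\bm t)}/m^\times(B)$ for every $r$, which is exactly the needed content bound.
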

For the geometric explanation of the dimensional number $s(\bm t)$ and the sets $\ck_i(\tau)$, we refer to \cite[\S 4]{WW21}.

Wang and Wu \cite{WW21} also gave the Hausdorff dimension of $W(\bm t)$ by weakening the local ubiquity assumption to a full measure assumption.
\begin{thm}[{\cite[Theorem 3.4]{WW21}}]\label{t:underfull}
	For $1\le i\le d$, assume the $\kappa_i$-scaling property for $\{\rci:\alpha\in J\}$ and
	\[m^\times\bigg(\limsup_{\alpha\in J,\beta(\alpha)\to \infty}\Delta\big(\rcat,\rho(\beta(\alpha))^{\bm a}\big)\bigg)=m^\times(X^\times).\]
	Then, one has
	\[\hdim W(\bm t)\ge s(\bm t),\]
	where $\hdim$ denotes the Hausdorff dimension.
\end{thm}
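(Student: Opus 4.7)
The strategy is to construct a Cantor-like subset $K\subset W(\bm t)$ of Hausdorff dimension at least $s(\bm t)$ directly, and then deduce the lower bound on $\hdim W(\bm t)$ via the mass distribution principle. The construction proceeds level by level, nesting shrunk rectangles $\Delta(\rcat,\rho(u_n)^{\bm a+\bm t})$ inside ubiquity rectangles $\Delta(\rcat,\rho(u_n)^{\bm a})$, and uses the full-measure hypothesis to guarantee that a rich supply of such rectangles is available inside every parent at each level. Note that full Hausdorff $s(\bm t)$-measure (the conclusion of Theorem \ref{t:mRtoR}) cannot be expected here, since the full-measure hypothesis is strictly weaker than local ubiquity for rectangles: it only gives cumulative coverage in the limsup and no uniform per-level density on every ball.

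Given a finite family of parent rectangles $R$ at level $k-1$, I invoke the full-measure assumption restricted to each $R$: since $m^\times\big(R\cap\bigcup_{n\ge N}\bigcup_{\alpha\in J_n}\Delta(\rcat,\rho(u_n)^{\bm a})\big)=m^\times(R)$ for every $N$, by continuity of measure from below I can choose $N_{k+1}$ large enough that the finite union of ubiquity rectangles over $n\in[N_k,N_{k+1})$ covers $(1-\epsilon_k)m^\times(R)$ inside every such $R$ simultaneously, with $\epsilon_k\to 0$. A Vitali-type extraction then produces a collection of essentially disjoint ubiquity rectangles lying inside $R$, and the daughter rectangles at level $k$ are the corresponding shrunk sets $\Delta(\rcat,\rho(u_n)^{\bm a+\bm t})$. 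Passing to a dyadic sub-partition of indices if necessary, one may further arrange that the values of $\rho(u_n)$ involved in a single block lie within a bounded ratio of some reference scale $r_k\to 0$, so that the geometric estimates at level $k$ use a single effective scale.

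A probability measure $\mu$ on $K$ is then defined by distributing mass uniformly over the selected level-$k$ rectangles and taking a weak limit. The main obstacle will be verifying the Frostman-type bound $\mu(B(x,r))\lesssim r^s$ for every $s<s(\bm t)$ and every sufficiently small $r$. This is where the formula for $s(\bm t)$ is forced on us: one must split into cases depending on how $r$ compares with the side-lengths $r_k^{a_i}$ and $r_k^{a_i+t_i}$ of the ubiquity and daughter rectangles at each level, and for each critical threshold $\tau\in\ca$ the ball $B(x,r)$ interacts with the nested rectangles differently along the index sets $\ck_1(\tau),\ck_2(\tau),\ck_3(\tau)$. The $\kappa_i$-scaling property supplies in the intermediate directions the required local estimate $m_i\bigl(\Delta(\rci,\eta)\cap B(x_i,r)\bigr)\asymp\eta^{\delta_i(1-\kappa_i)}r^{\delta_i\kappa_i}$, and an optimisation over $\tau\in\ca$ reproduces precisely the value $s(\bm t)$. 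The mass distribution principle then gives $\hm^s(K)>0$, and letting $s\uparrow s(\bm t)$ completes the proof.
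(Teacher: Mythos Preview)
Your proposal is correct and is essentially the classical Cantor-set approach of \cite[Theorem 3.4]{WW21}: nest shrunk rectangles inside ubiquity rectangles level by level, distribute mass, and verify the Frostman bound by the case analysis over $\tau\in\ca$ with the $\kappa_i$-scaling estimates. All the key ingredients are correctly identified, including the dyadic grouping of scales so that a single reference $r_k$ governs each level.

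However, the paper deliberately takes a different route. Its whole point is that the multi-level Cantor construction is unnecessary: it suffices to establish, for each $t<s(\bm t)$ and each ball $B$, a \emph{single-level} Hausdorff content bound
\[
\hc^t\bigg(\bigcup_{\alpha\in J,\,\beta(\alpha)\ge n}\Delta\big(\rcat,\rho(\beta(\alpha))^{\bm a+\bm t}\big)\cap B\bigg)\apprge m^\times(B),
\]
obtained by one application of the $K_{G,B}$-lemma followed by a measure estimate exactly as in the proof of Theorem~\ref{t:MTPRtO}. The abstract machinery (Corollary~\ref{c:weaken} and Theorem~\ref{t:LIPfull}) then upgrades this to $W(\bm t)\in\scg_1^{s(\bm t)}(X^\times)$, which in particular gives $\hdim W(\bm t)\ge s(\bm t)$. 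So your approach reproduces the original \cite{WW21} argument and yields only the dimension bound, whereas the paper's content-based approach is shorter (no nesting or inter-level separation to track) and simultaneously delivers the large intersection property, which is strictly more than the stated Hausdorff dimension inequality.
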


The Hausdorff dimension of general case $W\big(\{\bm t_n\}\big)$ is obtained under the uniform local ubiquity.
\begin{thm}[{\cite[Theorem 3.3]{WW21}}]\label{t:generalmRtoR}
	For $1\le i\le d$, assume the $\kappa_i$-scaling property for $\{\rci:\alpha\in J\}$. Assume the uniform local ubiquity for rectangles. One has
	\[\hdim W\big(\{\bm t_n\}\big)\ge \max_{\bm t\in \cu}s(\bm t),\]
	where $\cu$ is the set of accumulation points of the sequence $\{\bm t_n\}_{n\ge 1}$.
\end{thm}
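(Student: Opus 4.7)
The plan is to reduce, for each accumulation point $\bm t\in\cu$, to the constant-vector statement of Theorem \ref{t:mRtoR} applied along an appropriate subsequence of $\{\bm t_n\}$. Since $\hdim$ is monotone under inclusion, it suffices to prove $\hdim W(\{\bm t_n\})\ge s(\bm t)$ for each $\bm t\in\cu$ individually and then take the maximum over $\cu$.

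Fix $\bm t\in\cu$ and extract a subsequence $\{n_k\}_{k\ge 1}$ with $\bm t_{n_k}\to\bm t$. Given $\epsilon>0$, for all sufficiently large $k$ we have $t_{n_k,i}\le t_i+\epsilon$ for every $1\le i\le d$; combined with $\rho(u_{n_k})\to 0$, this gives the coordinate-wise inequality $\rho(u_{n_k})^{a_i+t_i+\epsilon}\le \rho(u_{n_k})^{a_i+t_{n_k,i}}$ and hence
\[
\Delta\big(\rcat,\rho(u_{n_k})^{\bm a+\bm t+\epsilon\bm 1}\big)\subseteq\Delta\big(\rcat,\rho(u_{n_k})^{\bm a+\bm t_{n_k}}\big).
\]
Consequently the limsup set
\[
V_\epsilon:=\big\{x\in X^\times:x\in\Delta(\rcat,\rho(u_{n_k})^{\bm a+\bm t+\epsilon\bm 1}),\ \alpha\in J_{n_k},\ \text{for i.m.\,}k\big\}
\]
is contained in $W(\{\bm t_n\})$, so $\hdim W(\{\bm t_n\})\ge\hdim V_\epsilon$.

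Now $V_\epsilon$ is precisely a set of the form $W(\bm t+\epsilon\bm 1)$, but built from the subfamily $\{J_{n_k}\}$ and the corresponding subsequence of $\{u_n\}$ in place of $\{J_n\}$ and $\{u_n\}$. The $\kappa_i$-scaling property depends only on $\{\rci\}$ and is unaffected; the uniform local ubiquity hypothesis passes to subsequences because $\liminf$ cannot decrease when restricted to a subsequence, so the subfamily still satisfies (uniform, and in particular ordinary) local ubiquity in the sense required by Theorem \ref{t:mRtoR}. Applying that theorem yields $\hdim V_\epsilon\ge s(\bm t+\epsilon\bm 1)$, and therefore $\hdim W(\{\bm t_n\})\ge s(\bm t+\epsilon\bm 1)$.

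Letting $\epsilon\to 0^+$ and then maximizing over $\bm t\in\cu$ finishes the proof, provided one has continuity (or at least upper semicontinuity) of $\bm u\mapsto s(\bm u)$ at each point of $\cu$. I expect this last step to be the main obstacle, since both the finite index set $\ca(\bm u)=\{a_i,a_i+u_i:1\le i\le d\}$ and the partition $(\ck_1(\tau),\ck_2(\tau),\ck_3(\tau))$ depend discontinuously on $\bm u$. The cleanest remedy is to rewrite $s(\bm u)$ as the infimum, over all ordered tripartitions of $\{1,\dots,d\}$ and all $\tau>0$, of the rational expression appearing in the theorem; on each fixed-partition stratum this expression is continuous in $(\bm u,\tau)$, and a short case analysis at the breakpoints $\tau\in\ca(\bm u)$ then yields the desired continuity in $\bm u$.
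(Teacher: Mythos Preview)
Your proposal is correct. The reduction to Theorem \ref{t:mRtoR} along a subsequence works exactly as you describe: uniform local ubiquity is inherited by subsequences (the $\liminf$ can only increase upon restriction), the inclusion $V_\epsilon\subset W(\{\bm t_n\})$ is immediate once $\rho(u_{n_k})<1$, and the continuity of $\bm u\mapsto s(\bm u)$ on $(\R^+)^d$ that you flag as the main obstacle is in fact already established in \cite[Proposition~3.1]{WW21} (the paper itself notes this in the remark following Theorem \ref{t:Hausmeartor}), so your anticipated difficulty dissolves.

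The paper does not reprove Theorem \ref{t:generalmRtoR} as such---it is quoted from \cite{WW21}---but it proves the strictly stronger Theorem \ref{t:Hausmeartor} by a genuinely different route. Rather than reducing to the constant-vector theorem, the paper reruns the measure construction of Section \ref{s:rtor} directly with $\bm t_n$ in place of $\bm t$: for any $t$ satisfying $t\le s(\bm t_n)$ for infinitely many $n$, uniform local ubiquity guarantees that the $K_{G,B}$-lemma applies at \emph{every} sufficiently large $n$, so one may pick such an $n$ inside that infinite set, build the Frostman-type measure on $B\cap\bigcup_{\alpha\in J_n}\Delta(\rcat,\rho(u_n)^{\bm a+\bm t_n})$, and obtain $\mu(B(x,r))\lesssim r^{s(\bm t_n)}/m^\times(B)\le r^t/m^\times(B)$; Corollary \ref{c:weaken} then yields the large intersection property. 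This direct approach buys the sharper lower bound $\limsup_n s(\bm t_n)$ and membership in $\scg_1^s(X^\times)$, which can strictly exceed $\max_{\bm t\in\cu}s(\bm t)$ when $\{\bm t_n\}$ is unbounded and $\cu$ is empty (see the example \eqref{eq:exmset} in the paper). Your subsequence-plus-continuity argument, while shorter and perfectly adequate for Theorem \ref{t:generalmRtoR} as stated, cannot see this improvement because it discards the tail behaviour of $\{\bm t_n\}$ the moment you fix an accumulation point.
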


\subsection{Dynamical dimension transference principle}
Compared with the classic Diophantine approximation which concerns the quantitative properties
of the distribution of rational numbers or resonant sets, dynamical Diophantine approximation concerns that of the orbits in a dynamical system.
In analogy with MTP, Wang and Zhang \cite{WZ21} provided a general principle for the Hausdorff dimension of the
$\limsup$ set arising in a general expanding dynamical system.

Let us now specify their setting. Let $(X,\dist,T)$ be a topological dynamical system with $X$ a compact metric space and $T\colon X\to X$ a continuous transformation. Suppose that $(X,\dist,T)$ satisfies the following conditions.

\noindent \textbf{Hypothesis A: dynamical ubiquity.} Fix $y_0\in X$ and denote
\[Y=\bigcup_{n\ge 0} T^{-n}y_0.\]
There exist a strictly positive continuous function $\phi\colon X\to \R^+$ and constants $0<c_2<1<c_1<\infty$ such that
\begin{enumerate}[({A}1)]
	\item Covering property: given $y\in Y$ and $n\in\N$, the following family of balls covers $X$
	\[\bigl\{B(z,c_1e^{-S_n\phi(z)}):z\in T^{-n}y\bigr\},\]
	where $S_n\phi(z)=\sum_{i=0}^{n-1}\phi(T^iz)$ denotes the ergodic sum.
	\item Separation property: given $y\in Y$ and $n\in \N$, the following family of balls are disjoint
	\[\bigl\{B(z,c_2e^{-S_n\phi(z)}):z\in T^{-n}y\bigr\}.\]
\end{enumerate}

\noindent \textbf{Hypothesis B: local conformality.} For any $\lambda>1$, there exists $0<b_\lambda<1$ such that, whenever $0<b\le b_\lambda$, it holds that
\[B(Tz,\lambda^{-1}b)\subset T\big(B(z,e^{-\phi(z)}b)\big)\subset B(Tz,\lambda b)\]

\noindent \textbf{Hypothesis C: exactness.} Given any ball $B$, there exists $N\in\N$ such that $T^nB=X$ whenever $n\ge N$.

Let $\psi\colon X\to\R^+$ be a strictly positive continuous function. Let $y_0\in X$ and $\phi$ be as in hypothesis (A). Consider the following ``well-approximable" set driven by the dynamical system $(X,\dist,T)$,
\[\cw(T,\psi):=\big\{x\in X:\dist(x,z)<e^{-S_n(\phi+\psi)(z)}\text{ for some $z\in T^{-n}y_0$, i.m.\,$n\in\N$}\big\}.\]
\begin{thm}[{\cite[Theorem 1.4]{WZ21}}]\label{t:dmmtp}
	Assume the hypotheses (A)--(C). Let $\psi$ be a non-negative continuous function over $X$. Then,
	\[\hdim \cw(T,\psi)=\inf\big\{t\ge 0:P\big(-t(\phi+\psi)\big)\le 0\big\},\]
	where $P\big(-t(\phi+\psi)\big)$ is the pressure function defined as follows: For any $y\in Y$
	\[P\big(-t(\phi+\psi)\big):=\lim_{n\to\infty}\frac{1}{n}\log\sum_{z\in T^{-n}y_0}e^{-tS_n(\phi+\psi)}.\]
\end{thm}

\section{Main results}\label{s:mian}
In this section, we describe the notion of large intersection property and state the main results of the paper.
\subsection{Large intersection property}
The notion of large intersection property was introduced by Falconer \cite{Fal94}, and we refer to that paper for some background. Recall that $X$ is compact and supports a $g$-Ahlfors regular measure.
\begin{defn}\label{d:LIP1}
	Let $0<s\le \hdim X$. We define $\scg_1^s(X)$ to be the class of $G_\delta$-subsets $A$ of $X$ such that there exists a constant $c>0$ such that for any $0<t<s$ and any ball $B$,
	\[\hc^t(A\cap B)>c\hc^t(B).\]
\end{defn}
The more general notion was given by Bugeaud \cite{Bu04}.

\begin{defn}[General dimension function]\label{d:LIP2}
	Let $f$ be a dimension function. We define $\scg_2^f(X)$ to be the class of $G_\delta$-subsets $A$ of $X$ such that there exists a constant $c>0$ such that for any dimension function $h\prec f$ and any ball $B$,
	\[\hc^h(A\cap B)>c\hc^h(B).\]
\end{defn}

As stated in Section \ref{s:intro}, having large intersection property does not imply the full Hausdorff measure statement. Because, in the definition of $\scg_1^{s}(X)$, we do not require that the Hausdorff content bound holds for $t=s$. The same reason for the class $\scg_2^f(X)$. A reasonable way to overcome this shortfall is to modify the definitions as follows.
\begin{defn}\label{d:LIP}
	Let $f$ be a dimension function. We define $\scg^f(X)$ to be the class of $G_\delta$-subsets $A$ of $X$ such that there exists a constant $c>0$ such that for any ball $B$,
	\[\hc^f(A\cap B)>c\hc^f(B).\]
\end{defn}
\begin{rem}
	From the condition, it is not difficult to check that the Hausdorff content bound holds for all dimension functions $h\preceq f$.
\end{rem}
When $f(r)=r^s$, we write $\scg^s(X)$ in place of $\scg^f(X)$. With this new definition given above, we are able to develop a theory that implies both the full Hausdorff measure statement and the large intersection property, which will be presented in the next section.
\subsection{Statement of results}
Our first result shows that the new class $\scg^f(X)$ has all the desired properties.
\begin{thm}\label{t:LIPfull}
		Let $f$ be a dimension function such that $f\preceq g$. The class $\scg^f(X)$ is closed under countable intersection. Moreover, for any $A\in\scg^f(X)$, we have
	\[\hm^f(A)=\hm^f(X).\]
\end{thm}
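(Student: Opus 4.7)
The plan is to handle the two claims separately, leveraging throughout the uniform estimate $\hc^f(B(x,r))\asymp f(r)$ on balls of radius $r\le|X|$. This comparability follows from the $g$-Ahlfors regularity together with $f\preceq g$: the upper bound is a self-cover, while the matching lower bound comes from a Frostman-type construction of an $f$-regular measure on an appropriate Cantor-like subset of $B$. I also note that $f$ is itself doubling, since $f/g$ is monotone with positive limit at $0$ and $g$ is doubling.

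For the full Hausdorff $f$-measure statement I would first combine the hypothesis $\hc^f(A\cap B)>c\hc^f(B)$ with $\hm^f\ge\hc^f$ to obtain the uniform local estimate $\hm^f(A\cap B(x,r))\gtrsim f(r)$. I would then split into cases according to the behaviour of $f/g$. If $f\asymp g$, then $\hm^f$ is comparable to the doubling measure $\hm^g$, and the local estimate rewrites as $\hm^g(A\cap B)/\hm^g(B)\gtrsim 1$ uniformly in $B$; Lebesgue differentiation for doubling Borel measures then rules out any density point of $X\setminus A$, forcing $\hm^g(X\setminus A)=0$ and hence $\hm^f(A)=\hm^f(X)$. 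If $f\prec g$, I would partition $X$ up to an $\hm^g$-null set into $N(r)\asymp g(|X|)/g(r)$ pairwise disjoint balls of radius $r$ using $g$-Ahlfors regularity, and sum the local bounds to get $\hm^f(A)\gtrsim g(|X|)\cdot f(r)/g(r)$; letting $r\to 0$ and using $f/g\to\infty$ forces $\hm^f(A)=\infty$. The same partitioning estimate applied to $X$ (using only $\hc^f(B_i)\asymp f(r)$) shows $\hm^f(X)=\infty$, so the required equality holds in this case too.

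For closure under countable intersection, let $\{A_n\}_{n\ge 1}\subset\scg^f(X)$ with individual constants $c_n$. The intersection $A:=\bigcap_nA_n$ is plainly $G_\delta$; only the uniform content bound $\hc^f(A\cap B)>c^\ast\hc^f(B)$ needs to be verified. The central obstacle is that Hausdorff content is not a measure and does not behave continuously along decreasing sequences of open sets $U_{n,m}\downarrow A_n$. I would therefore adopt the net-content approach of Falconer and Bugeaud: fix a Christ-type dyadic system on the compact doubling space $X$, introduce a net content $\tilde\hc^f$ defined by covers by such cubes only, verify $\tilde\hc^f\asymp\hc^f$ so that $\scg^f$-membership can equivalently be tested through $\tilde\hc^f$, and exploit that for compact sets the infimum defining $\tilde\hc^f$ is attained by \emph{finite} dyadic covers --- the compactness required to negotiate nested intersections. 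The closure then follows by an inductive refinement: at each stage $n$, start from a near-optimal dyadic cover of $(\bigcap_{k\le n}A_k)\cap B$, apply the $A_{n+1}$ content bound inside each selected cube to refine it into a cover of $(\bigcap_{k\le n+1}A_k)\cap B$, and extract a limiting cover of $A\cap B$ by a diagonal argument.

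The hard part is unambiguously the countable intersection. The full-measure assertion is essentially a packing estimate plus Lebesgue differentiation, whereas the intersection proof must reconcile the non-measure character of Hausdorff content with the demand of passing through a countable limit. The delicate point is arranging the inductive refinement so that the accumulated multiplicative loss over infinitely many stages stays bounded away from zero: reserving a correction factor $(1-\epsilon_n)$ with $\sum_n\epsilon_n<\infty$ at the $n$th stage, so that $\prod_n(1-\epsilon_n)>0$, is the standard device for producing a positive uniform constant $c^\ast$ from the individual constants $\{c_n\}$.
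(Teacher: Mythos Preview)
Your treatment of the full Hausdorff $f$-measure statement is essentially correct and matches the paper's proof: the case split according to whether $f\asymp g$ or $f\prec g$, the density argument for $\hm^g$ in the first case, and the summation over a scale-$r$ partition in the second all agree. (The paper partitions via the dyadic cube system $\cq_n$ rather than disjoint balls, which is cleaner since balls do not partition a general metric space, but the idea is identical.)

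The countable intersection argument, however, has a genuine gap. You correctly identify that one should pass to a net content $\nc^f$ via a Christ-type dyadic system, but your proposed mechanism for controlling the accumulated loss is wrong. The individual constants $c_n$ in the hypothesis $\hc^f(A_n\cap B)>c_n\,\hc^f(B)$ are \emph{given}, not chosen, and nothing prevents $\prod_n c_n=0$ (take $c_n=2^{-n}$). The $(1-\epsilon_n)$ device you describe governs approximation errors that \emph{you} introduce at each stage; it cannot manufacture a positive $c^\ast$ out of constants with vanishing product.

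The missing idea is an \emph{upgrading lemma}: if a Borel set $A$ satisfies $\nc^f(A\cap Q)>c\,\nc^f(Q)$ for every dyadic cube $Q$ and some fixed $c>0$, then in fact $\nc^f(A\cap Q)=\nc^f(Q)$ for every $Q$, and consequently $\nc^f(A\cap U)=\nc^f(U)$ for every open $U$. These are Lemmas~\ref{l:subto=} and~\ref{l:subto=2} in the paper. Once each $A_n$ satisfies the \emph{equality}, there is no multiplicative loss whatsoever in passing from stage $n$ to stage $n+1$, and the $G_\delta$ structure lets you carry this through the countable intersection (Lemma~\ref{l:inters}, following Bugeaud). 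Without this upgrading step your inductive refinement cannot close, regardless of how carefully you budget the $\epsilon_n$.
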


This gives a way to prove simultaneously the full Hausdorff measure statement and the large intersection property for $\limsup$ sets from the perspective of Hausdorff content. On verifying the Hausdorff content bound, we obtain a stronger version of Theorem \ref{t:BtO}.

\begin{thm}\label{t:MTPBtO}
	Let $f$ be a dimension function such that $f\preceq g$. Assume that $\{B_n\}_{n\ge 1}$ is a sequence of balls in $X$ with radii tending to 0, and that $\hm^g(\limsup B_n)=\hm^g(X)$. Let $\{E_n\}_{n\ge 1}$ be a sequence of open sets satisfying $E_n\subset B_n$.  Assume that there exists a constant $c>0$ such that
	\begin{equation}\label{eq:contentbound}
		\hc^f(E_n)>c\hm^g(B_n)\quad\text{ for all $n\ge 1$}.
	\end{equation}
	 Then,
	\[\limsup_{n\to\infty}E_n\in \scg^f(X).\]
\end{thm}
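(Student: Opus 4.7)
Since each $E_n$ is open, the set $\limsup_{n\to\infty} E_n = \bigcap_{N\ge 1} \bigcup_{n \ge N} E_n$ is automatically a $G_\delta$ set, so the whole burden is to establish, for an arbitrary ball $B \subset X$, the content inequality
\[
\hc^f\bigl(\limsup_{n\to\infty} E_n \cap B\bigr) \gtrsim \hc^f(B),
\]
with implied constant independent of $B$. My plan is to construct a Cantor-like set $K \subset (\limsup_{n\to\infty} E_n) \cap B$ together with a finite Borel measure $\mu$ supported on $K$ whose total mass is comparable to $\hc^f(B)$ and which satisfies the uniform estimate $\mu(U) \le C\, f(|U|)$ for every ball $U$. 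The mass distribution principle---which applies to Hausdorff content just as it does to Hausdorff measure, via the chain $\mu(K) \le \sum_j \mu(U_j) \le C\sum_j f(|U_j|)$ for any cover $\{U_j\}$ of $K$---then yields the desired lower bound.

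The Cantor construction proceeds in nested generations $\cg_0 = \{B\}, \cg_1, \cg_2, \dots$ of pairwise disjoint balls. To pass from $\cg_{\ell-1}$ to $\cg_\ell$ I proceed parent by parent: given $B^{(\ell-1)} \in \cg_{\ell-1}$ with associated open set $E^{(\ell-1)} \subset B^{(\ell-1)}$ satisfying $\hc^f(E^{(\ell-1)}) \ge c\,\hm^g(B^{(\ell-1)})$, I use that the hypothesis $\hm^g(\limsup_{m\to\infty} B_m) = \hm^g(X)$ descends to every open subset, yielding $\hm^g\bigl(E^{(\ell-1)} \cap \bigcup_{m \ge N_\ell} B_m\bigr) = \hm^g(E^{(\ell-1)})$ for any chosen threshold $N_\ell$. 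A standard Vitali-type covering argument combined with the $g$-Ahlfors regularity then extracts a finite disjoint subfamily $\{B_m\} \subset E^{(\ell-1)}$ with indices $m \ge N_\ell$ and $\sum_m g(r(B_m)) \ge c_0\, \hm^g(E^{(\ell-1)})$, where $c_0$ depends only on the Ahlfors-regularity constants. Choosing $N_\ell \nearrow \infty$ forces all radii in $\cg_\ell$ below $1/\ell$, so the nested intersection $K := \bigcap_\ell \bigcup_{B' \in \cg_\ell} B'$ is a nonempty compact set contained in $\limsup_{n\to\infty} E_n \cap B$: each $x \in K$ is trapped in a chain $B^{(\ell)} \subset E_{n(\ell-1)}$ with $n(\ell-1) \to \infty$. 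The measure $\mu$ is then built by assigning $\mu(B) := \hc^f(B)$ and subdividing each parent's mass among its children proportionally to their individual $\hm^g$-masses (normalized by the total $\hm^g$-mass of the parent's $E$-region); this is a consistent pre-measure on the generation-algebra and extends to a Borel measure supported on $K$.

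The main obstacle, and the technical heart of the argument, is the verification of the uniform bound $\mu(U) \le C\, f(|U|)$. The case $|U| \ge r(B)$ is immediate from $\mu(U) \le \mu(K) = \hc^f(B) \lesssim f(r(B)) \lesssim f(|U|)$. For smaller $U$, I locate the generation $\ell$ whose characteristic scale matches $|U|$, note that $U$ meets only boundedly many balls of $\cg_\ell$ by disjointness and Ahlfors regularity, and sum the allocated masses. The tension here is that the content hypothesis $\hc^f(E^{(\ell)}) \ge c\,\hm^g(B^{(\ell)})$ only provides a $g$-type lower bound for $E^{(\ell)}$, whereas the desired conclusion is in terms of $f$. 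The assumption $f \preceq g$ is exactly what closes this gap: the monotonicity of $r\mapsto f(r)/g(r)$ ensures that the ratio at deeper scales dominates its value at $r(B)$, which, propagated through the inductive mass allocation, converts the $g$-regular local contributions into an $f$-regular global bound. Once the constants are tracked uniformly across all generations and all choices of $B$, the mass distribution principle delivers $\hc^f(\limsup_{n\to\infty}E_n \cap B) \ge \mu(K)/C \gtrsim \hc^f(B)$, completing the verification that $\limsup_{n\to\infty} E_n \in \scg^f(X)$.
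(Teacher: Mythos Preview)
There is a genuine gap in the induction step of your mass estimate. Your Vitali argument inside the open set $E^{(\ell-1)}$ produces children with $\sum_m g(r(B_m)) \ge c_0\,\hm^g(E^{(\ell-1)})$, and your mass allocation then gives
\[
\mu(B^{(\ell)}) \asymp \mu(B^{(\ell-1)})\cdot \frac{g(r_\ell)}{\hm^g(E^{(\ell-1)})}.
\]
To propagate the bound $\mu(B^{(\ell)})\lesssim f(r_\ell)$ from $\mu(B^{(\ell-1)})\lesssim f(r_{\ell-1})$ using $f\preceq g$, you would need $\hm^g(E^{(\ell-1)})\gtrsim g(r_{\ell-1})\asymp\hm^g(B^{(\ell-1)})$. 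But the hypothesis $\hc^f(E_n)>c\,\hm^g(B_n)$ says nothing of the sort: an open set can have large $f$-content while having arbitrarily small $g$-measure (think of a thin neighbourhood of a set of dimension strictly between those governed by $f$ and $g$). So the normalizing denominator in your recursion is uncontrolled, and the claimed $f$-bound on $\mu$ does not follow. Your appeal to the monotonicity of $f/g$ only helps once the denominator is $\asymp g(r_{\ell-1})$, which is precisely what fails.

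The paper avoids this by never recursing through the sets $E_k$ at all. It works on the single open set $F_n=\bigcup_{k\ge n}E_k$ and proves $\hc^f(F_n\cap B)\gtrsim \hc^f(B)$ for each $n$; closure of $\scg^f(X)$ under countable intersection then handles $\limsup E_n=\bigcap_n F_n$. The content hypothesis on each $E_k$ is converted, via a Frostman-type lemma, into a probability measure $\nu_k$ on $E_k$ satisfying $\nu_k(B(x,r))\lesssim f(r)/\hm^g(B_k)$; this is how the $f$-content information is actually used. The measure $\mu$ is then a weighted average of the $\nu_k$ over roughly $f(|B|)/g(|B|)$ many parallel (not nested) families of well-separated $E_k$'s obtained from the $K_{G,B}$-lemma. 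This flat, single-level construction is exactly what sidesteps the loss you incur when trying to iterate through sets whose $g$-measure you cannot bound from below.
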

\begin{rem}
	Ghosh and Nandi \cite[Lemma 3.13]{GN24} established an analogous result within the setting of $\scg_1^s(X)$. As previously explained, this result provides insights into the Hausdorff dimension rather than the Hausdorff measure for relevant $\limsup$ set. In \cite[Theorem 1.2]{Er22}, Eriksson-Bique obtained a lower bound on the Hausdorff content of $\limsup E_n$ under less restrictive conditions, but did not address the full Hausdorff measure statement or the large intersection property. Here we impose stronger conditions in order to give a more complete description of the $\limsup$ set.
\end{rem}

A somewhat surprising consequence of Theorem \ref{t:MTPBtO} is that the conditions for the large intersection property can be significantly weakened.
\begin{cor}\label{c:weaken}
	Let $f$ be a dimension function such that $f\preceq g$. Assume that $\{B_n\}_{n\ge 1}$ is a sequence of balls in $X$ with radii tending to 0, and that $\hm^g(\limsup B_n)=\hm^g(X)$. Let $\{E_n\}_{n\ge 1}$ be a sequence of open sets (not necessarily contained in $B_n$). The following statements hold.
	\begin{enumerate}[(1)]
		\item  If there exists a constant $c>0$ such that for any $B_k$,
		\[\limsup_{n\to\infty} \hc^f (E_n\cap  B_k)>c\hm^g(B_k),\]
		then
		\[\limsup_{n\to\infty} E_n\in\scg^f(X).\]
		\item If for any $h\prec f$, there exists a constant $c>0$ such that for any $B_k$,
\[\limsup_{n\to\infty} \hc^h (E_n\cap  B_k)>c\hm^g(B_k),\]
		then
		\[\limsup_{n\to\infty} E_n\in\scg_2^f(X).\]
		The same argument holds for $\scg_1^s(X)$ if $f(r)=r^s$ and $h$ is replaced by $t<s$.
	\end{enumerate}
\end{cor}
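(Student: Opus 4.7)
The plan is to reduce both parts to Theorem \ref{t:MTPBtO} by extracting, for each $B_k$, a single index $n_k$ along which the $f$-content hypothesis holds, and then passing to the auxiliary sequence $E_k' := E_{n_k} \cap B_k \subset B_k$. The $\limsup$ of this new sequence will fit the hypotheses of Theorem \ref{t:MTPBtO} and will be contained in $\limsup E_n$, which suffices.

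For part (1), fix the constant $c>0$ given by the hypothesis. Since $\limsup_n \hc^f(E_n \cap B_k) > c\,\hm^g(B_k)$, there are infinitely many $n$ with $\hc^f(E_n \cap B_k) > c\,\hm^g(B_k)$, so we may choose inductively an increasing sequence $n_1 < n_2 < \cdots$ satisfying $\hc^f(E_{n_k} \cap B_k) > c\,\hm^g(B_k)$ for every $k$; in particular $n_k \to \infty$. Set $E_k' := E_{n_k} \cap B_k$ and $B_k' := B_k$. Then each $E_k'$ is open and contained in $B_k'$, $\hc^f(E_k') > c\,\hm^g(B_k')$, the radii of $B_k'$ tend to zero, and $\hm^g(\limsup B_k') = \hm^g(\limsup B_k) = \hm^g(X)$. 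Theorem \ref{t:MTPBtO} therefore yields $\limsup_k E_k' \in \scg^f(X)$. Because $n_k \to \infty$, any $x \in \limsup_k E_k'$ lies in $E_{n_k}$ for infinitely many $k$, hence in $E_n$ for infinitely many $n$, so $\limsup_k E_k' \subseteq \limsup_n E_n$. Monotonicity of $\hc^f$ combined with the fact that $\limsup_n E_n$ is a $G_\delta$ set (the $E_n$ are open) then gives $\limsup_n E_n \in \scg^f(X)$.

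Part (2) is proved by running the same argument with a given $h \prec f$ in place of $f$: the choice of indices $n_k$ may depend on $h$, but this is harmless since membership in $\scg_2^f(X)$ is checked one $h$ at a time. The reduction produces $\limsup E_k' \in \scg^h(X)$, and the same monotonicity argument yields $\hc^h(\limsup E_n \cap B) \gtrsim \hc^h(B)$ for every ball $B$, i.e.\,$\limsup E_n \in \scg_2^f(X)$. The analogous statement for $\scg_1^s(X)$ follows by specializing to $h(r)=r^t$ with $t<s$.

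The main (and rather minor) technical point is that applying Theorem \ref{t:MTPBtO} to $h$ requires $h \preceq g$. This follows from $h \prec f$ and $f \preceq g$: the ratio $h/g = (h/f)(f/g)$ tends to infinity at $0$, and the monotonicity of $h/g$ is inherited from the monotonicity of its two factors in the present setting. Beyond this, the proof is a straightforward diagonal extraction from Theorem \ref{t:MTPBtO}.
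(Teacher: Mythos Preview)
Your proposal is correct and follows essentially the same route as the paper's proof: extract a strictly increasing sequence $n_k$ with $\hc^f(E_{n_k}\cap B_k)>c\,\hm^g(B_k)$, set $F_k=E_{n_k}\cap B_k\subset B_k$, apply Theorem~\ref{t:MTPBtO} to $\{F_k\}$, and use $\limsup_k F_k\subset\limsup_n E_n$; part~(2) is then obtained by running part~(1) for each $h\prec f$ (respectively each $t<s$), exactly as the paper does via $\scg_2^f(X)=\bigcap_{h\prec f}\scg^h(X)$. Your explicit verification that $h\preceq g$ (needed to invoke Theorem~\ref{t:MTPBtO} with $h$) is a detail the paper leaves implicit.
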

\begin{rem}
	The condition in item (1) is stronger than that in item (2), and therefore the conclusion is stronger. In general, one can let $F_n=\bigcup_{k\ge n}E_k$ and then verify that $\{F_n\}_{n\ge 1}$ satisfies the condition in item (1) or (2). This also yields the desired conclusion since
	\[\limsup_{n\to \infty}E_n=\limsup_{n\to \infty}F_n.\]
\end{rem}
\begin{rem}
	The advantages of Theorem \ref{t:MTPBtO} and Corollary \ref{c:weaken} are that all we need to do is to verify Hausdorff content bound condition, instead of constructing a Cantor-like subset of $\limsup E_n$. Although the verification is similar to the proofs in \cite{AB19,BV06,KR21,WW21,Zh21}, it is indeed much simpler in comparison, see Section \ref{s:rtor} for example.
\end{rem}


Applying Theorem \ref{t:MTPBtO}, we obtain a stronger statement of Theorem \ref{t:mtpbtor}.
	\begin{cor}\label{c:implied}
	Theorem \ref{t:mtpbtor} is implied by Theorem \ref{t:MTPBtO}. Moreover,
	\[\Lambda(\Upsilon)\in\scg^f(X).\]
	\end{cor}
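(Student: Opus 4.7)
The plan is to rewrite $\Lambda(\Upsilon)$ as a $\limsup$ of open sets $E_m$ each contained in a ball $B_m$, arrange so that $\hm^g(\limsup B_m)=\hm^g(X)$, and then verify the content bound \eqref{eq:contentbound} needed to invoke Theorem \ref{t:MTPBtO}.

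Set $r_n := g^{-1}\bigl((f(\Upsilon_n)/g(\Upsilon_n)^\kappa)^{1/(1-\kappa)}\bigr)$, so that by construction $f(\Upsilon_n) = g(r_n)^{1-\kappa} g(\Upsilon_n)^\kappa$ and the hypothesis of Theorem \ref{t:mtpbtor} reads $\hm^g(\limsup_n \Delta(\rc_n, r_n)) = \hm^g(X)$. For each $n$, choose a maximal $r_n$-separated subset $\{x_{n,j}\}_j \subset \rc_n$ and set
\[
B_{n,j} := B(x_{n,j}, 2r_n), \qquad E_{n,j} := B_{n,j} \cap \Delta(\rc_n, \Upsilon_n).
\]
Reindex $(n,j)$ as a single sequence $\{(B_m, E_m)\}_m$. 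Maximality gives $\rc_n \subset \bigcup_j B(x_{n,j}, r_n)$, so for $\Upsilon_n \le r_n$ one has both $\bigcup_j B_{n,j} \supset \Delta(\rc_n, r_n)$ (yielding $\hm^g(\limsup B_m) = \hm^g(X)$) and $\bigcup_j E_{n,j} = \Delta(\rc_n, \Upsilon_n)$ (yielding $\limsup E_m = \Lambda(\Upsilon)$).

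The heart of the proof is showing $\hc^f(E_{n,j}) \gtrsim g(r_n) \asymp \hm^g(B_{n,j})$. The $\kappa$-scaling property gives $\hm^g(E_{n,j}) \asymp g(r_n)^\kappa g(\Upsilon_n)^{1-\kappa}$, so I would rescale $\hm^g|_{E_{n,j}}$ to a measure $\mu$ of total mass $g(r_n)$ and apply the mass distribution principle: it suffices to verify $\mu(B(y,\rho)) \lesssim f(\rho)$ for every ball $B(y,\rho)$. Splitting the scales $\rho > 2r_n$, $\Upsilon_n < \rho \le 2r_n$, and $\rho \le \Upsilon_n$: the first uses only $f \preceq g$ to conclude $\mu(B) \le g(r_n) \lesssim f(r_n) \le f(\rho)$; the second replaces $y$ by a nearby point of $\rc_n$ and combines $\kappa$-scaling with the monotonicity of $f/g^\kappa$ (a dimension function by assumption) to give $\mu(B) \lesssim g(r_n)^{1-\kappa} g(\rho)^\kappa \lesssim f(\rho)$ via $f(\rho)/g(\rho)^\kappa \ge f(\Upsilon_n)/g(\Upsilon_n)^\kappa = g(r_n)^{1-\kappa}$; the third uses Ahlfors regularity together with monotonicity of $f/g$ and the identity $f(\Upsilon_n)/g(\Upsilon_n) = (g(r_n)/g(\Upsilon_n))^{1-\kappa}$. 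The choice of $r_n$ is crafted precisely so that all three estimates meet the threshold $f(\rho)$ exactly at the transition scales $\rho = \Upsilon_n$ and $\rho = r_n$.

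Theorem \ref{t:MTPBtO} applied to $\{(B_m, E_m)\}_m$ then yields $\Lambda(\Upsilon) = \limsup_m E_m \in \scg^f(X)$, and Theorem \ref{t:LIPfull} gives $\hm^f(\Lambda(\Upsilon)) = \hm^f(X)$, recovering Theorem \ref{t:mtpbtor} with the additional large intersection conclusion. The main obstacle is the small-ball case $\rho \le \Upsilon_n$ of the mass distribution estimate, where the monotonicity of $f/g$ built into $f \preceq g$ (which in the interesting case $f \prec g$ forces $f/g$ to be nonincreasing) must be used in tandem with the precise calibration of $r_n$; the other two ranges are natural but also delicate, since one cannot afford to lose more than a constant in any of the three regimes.
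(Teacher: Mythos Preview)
Your proposal is correct and follows essentially the same route as the paper: both reduce to covering each $\Delta(\rc_n,\tilde\Upsilon_n)$ by balls of radius $\asymp\tilde\Upsilon_n$ centred on $\rc_n$ (you via a maximal separated net, the paper via the $5r$-lemma), set $E_{n,j}=B_{n,j}\cap\Delta(\rc_n,\Upsilon_n)$, and verify the content bound by the mass distribution principle applied to normalised $\hm^g|_{E_{n,j}}$ with the identical three-scale split $\rho>\tilde\Upsilon_n$, $\Upsilon_n<\rho\le\tilde\Upsilon_n$, $\rho\le\Upsilon_n$ using respectively $f\preceq g$, the monotonicity of $f/g^\kappa$, and the monotonicity of $f/g$.
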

	The implication arises from a straightforward observation. While the proof is uncomplicated, to the best of the author's knowledge, it has not been previously presented in the literature.

	Applying Corollary \ref{c:weaken}, we obtain a stronger statement of Theorem \ref{t:mRtoR}.
	\begin{thm}\label{t:meaRtoR}
		Under the same setting of Theorem \ref{t:mRtoR}, we have
		\[W(\bm t)\in\scg^{s(\bm t)}(X^\times).\]
	\end{thm}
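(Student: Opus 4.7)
The plan is to apply Corollary \ref{c:weaken}(1) with $f(r) = r^{s(\bm t)}$. Set
\[E_n := \bigcup_{\alpha \in J_n} \Delta\bigl(\rcat, \rho(u_n)^{\bm a + \bm t}\bigr),\]
so that $W(\bm t) = \limsup_n E_n$ and each $E_n$ is open. The product measure $m^\times$ is $g$-Ahlfors regular with $g(r) = r^{\delta_1 + \cdots + \delta_d}$, and a standard construction (e.g.\ enumerating balls centred at a countable dense subset with radii tending to $0$) supplies a sequence $\{B_n\}$ of balls in $X^\times$ with $\hm^g(\limsup B_n) = \hm^g(X^\times)$. Thus the theorem reduces to the uniform Hausdorff-content lower bound: there exists $c > 0$ such that for every ball $B \subseteq X^\times$,
\[\limsup_{n \to \infty} \hc^{s(\bm t)}(E_n \cap B) > c\, \hm^g(B).\]

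Fix such a $B$. Local ubiquity for rectangles provides an infinite subsequence of indices $n$ along which
\[m^\times\biggl(B \cap \bigcup_{\alpha \in J_n} \Delta\bigl(\rcat, \rho(u_n)^{\bm a}\bigr)\biggr) > c\, m^\times(B).\]
A Vitali-type selection argument applied to the ``big'' rectangles $R_\alpha^{\mathrm{big}} := \Delta(\rcat, \rho(u_n)^{\bm a})$ produces a sub-collection $J_n(B) \subseteq J_n$ whose members are pairwise disjoint, contained in a bounded enlargement of $B$, and still satisfy $\sum_{\alpha \in J_n(B)} m^\times(R_\alpha^{\mathrm{big}}) \gtrsim m^\times(B)$. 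On each inner rectangle $R_\alpha^{\mathrm{small}} := \Delta(\rcat, \rho(u_n)^{\bm a + \bm t}) \subseteq R_\alpha^{\mathrm{big}}$, I place the natural measure induced by the $\kappa_i$-scaling property in each coordinate, normalised so that its total mass equals $m^\times(R_\alpha^{\mathrm{big}})$. Summing over $\alpha \in J_n(B)$ yields a measure $\nu_n$ supported in $E_n \cap (cB)$ of total mass $\gtrsim \hm^g(B)$.

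The crux is the pointwise estimate $\nu_n(B(x,r)) \lesssim r^{s(\bm t)}$ at every scale $r$; combined with the mass distribution principle this delivers $\hc^{s(\bm t)}(E_n \cap (cB)) \gtrsim \hm^g(B)$ along the subsequence. Because the big rectangles are pairwise disjoint and each small rectangle lies inside its big parent, a ball of modest radius can meet only boundedly many of the $R_\alpha^{\mathrm{small}}$, so the bound reduces to a single-rectangle computation. Writing $r = \rho(u_n)^\tau$ and applying the $\kappa_i$-scaling property factor-by-factor, $\nu_n(B(x,r))$ splits into a product of powers of $r$ and of the widths $\rho(u_n)^{a_i + t_i}$; the resulting exponent of $r$ is a piecewise linear function of $\tau$ whose break-points are precisely the elements of $\ca = \{a_i, a_i + t_i : 1 \le i \le d\}$, and the minimum value of this exponent over $\tau \in \ca$ is $s(\bm t)$ by the very definition of the latter. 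The main obstacle is this case-by-case verification: enumerating the regimes of $\tau$, tracking which of $\ck_1(\tau), \ck_2(\tau), \ck_3(\tau)$ each coordinate belongs to, and confirming that the worst case matches the infimum defining $s(\bm t)$. This is essentially the calculation underlying the Cantor-set construction of \cite{WW21}, but the Hausdorff-content framework converts it into a direct measure estimate and bypasses the combinatorial separation and nesting bookkeeping.
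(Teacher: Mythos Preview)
Your overall strategy---verifying the content bound in Corollary~\ref{c:weaken}(1) by building a Frostman-type measure on $E_n\cap B$ from a well-separated family of rectangles furnished by local ubiquity---is exactly the paper's route. Two points in the execution need correcting, however.

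First, your ``big rectangles'' $R_\alpha^{\mathrm{big}}:=\Delta(\rcat,\rho(u_n)^{\bm a})$ are neighbourhoods of the full resonant sets, not rectangles; their geometry is uncontrolled and a Vitali selection among the indices $\alpha$ gives neither the product structure nor the separation you later invoke. The paper instead applies the $K_{G,B}$-lemma (Lemma~\ref{l:kgb2}, taken from \cite{WW21}), which selects genuine product rectangles $R_\alpha=\prod_iB(z_i,\rho(u_n)^{a_i})$ centred at points $z\in\rcat$, with $3r$-disjointness in each coordinate. The measure $\mu$ in \eqref{eq:measuremu} is then the sum over this finite family of the restricted $m^\times$ on $L_\alpha=R_\alpha\cap\Delta(\rcat,\rho(u_n)^{\bm a+\bm t})$, weighted by $m^\times(R_\alpha)$.

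Second, and this is the real gap, the pointwise estimate does \emph{not} reduce to a single-rectangle computation. Mere disjointness of the big rectangles does not bound how many a ball $B(x,r)$ can meet: when $r_0^{\tau_{k+1}}\le r<r_0^{\tau_k}$ for consecutive elements $\tau_k,\tau_{k+1}$ of $\ca$, in each direction $i$ with $a_i\ge\tau_{k+1}$ the ball swallows the rectangle's $i$th side, and a volume count gives up to $N_i\lesssim(r/r_0^{a_i})^{\delta_i}$ rectangles meeting it along that coordinate. The bound on $\mu(B(x,r))$ is obtained only after \emph{summing} over all intersecting rectangles, combining this count with the $\kappa_i$-scaling estimate on each $m^\times(B(x,r)\cap L_\alpha)$; see Observations~A--C in Section~\ref{s:rtor}. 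It is precisely this counting that populates the $\ck_1(\tau)$ term in the formula for $s(\bm t)$---a single-rectangle bound would produce a different exponent. The paper then checks via a short monotonicity lemma that the resulting piecewise expression in $\tau$ is always $\ge s(\bm t)$, with equality at the optimal $\tau\in\ca$.
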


	Corollary \ref{c:weaken} further enables us to improve the lower bound for $\hdim W\big(\{\bm t_n\}\big)$.
		\begin{thm}\label{t:Hausmeartor}
		Under the same setting of Theorem \ref{t:generalmRtoR}, we have
		\[W\big(\{\bm t_n\}\big)\in \scg_1^s(X^\times),\]
		where $s=\limsup s(\bm t_n)$. If $s$ is attained along a nonincreasing subsequence, then we further have
		\[W\big(\{\bm t_n\}\big)\in \scg^s(X^\times).\]
	\end{thm}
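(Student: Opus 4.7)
The plan is to apply Corollary \ref{c:weaken} to the sequence of open sets
\[
E_n := \bigcup_{\alpha\in J_n}\Delta\bigl(\rcat,\rho(u_n)^{\bm a+\bm t_n}\bigr),
\]
whose limsup is precisely $W(\{\bm t_n\})$. Fix a countable family of balls $\{B_k\}_{k\ge 1}$ in $X^\times$ (e.g.\ rational centres and dyadic radii, each repeated infinitely often) with $m^\times(\limsup_k B_k)=m^\times(X^\times)$. Everything reduces to the content estimate
\[
\hc^h(E_n\cap B)\gtrsim m^\times(B),
\]
valid for every ball $B\subset X^\times$ and every dimension function $h\preceq r^{s(\bm t_n)}$, uniformly once $n$ is sufficiently large.

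The content estimate is an abbreviated version of the covering analysis already present in the proof of Theorem \ref{t:mRtoR} in \cite{WW21}. Uniform local ubiquity supplies a family $\{\Delta(\rca^\times,\rho(u_n)^{\bm a}):\alpha\in J_n\}$ of thickened rectangles meeting $B$ in $m^\times$-measure $\gtrsim m^\times(B)$; a standard $5r$-covering argument applied to the enclosing boxes (balls in $X^\times$ of side $\rho(u_n)^{\min_i a_i}$) extracts a disjoint sub-collection whose union still has comparable $m^\times$-measure. Inside each such box, the slim rectangle $\Delta(\rcat,\rho(u_n)^{\bm a+\bm t_n})$ admits a lower bound on its $\hc^h$-content obtained by covering at scale $\rho(u_n)^\tau$ and optimising $\tau\in\ca$: the index sets $\ck_1(\tau), \ck_2(\tau), \ck_3(\tau)$ record, respectively, the directions already smaller than $\rho(u_n)^\tau$, those that become smaller only after the $\bm t_n$-shrinking, and those whose $\kappa_i$-scaling neighbourhood must be split into balls of radius $\rho(u_n)^\tau$. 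The minimum defining $s(\bm t_n)$ is exactly the exponent at which this optimisation balances, yielding the per-box content bound; summing over the disjoint boxes delivers the displayed estimate.

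With the content estimate in hand the theorem follows quickly. For the first assertion, given any $t<s=\limsup_n s(\bm t_n)$, choose a subsequence $\{n_\ell\}$ with $s(\bm t_{n_\ell})>t$ and apply the estimate with $h(r)=r^t$ along this subsequence to obtain $\limsup_n \hc^t(E_n\cap B_k)\gtrsim m^\times(B_k)$ for every $B_k$; Corollary \ref{c:weaken}(2) (with $f(r)=r^s$) then gives $W(\{\bm t_n\})\in\scg_1^s(X^\times)$. For the second assertion, the hypothesis provides a subsequence $\{n_\ell\}$ with $s(\bm t_{n_\ell})$ nonincreasing and converging to $s$; hence $s(\bm t_{n_\ell})\ge s$, so $r^s\preceq r^{s(\bm t_{n_\ell})}$, and the content estimate holds with $h(r)=r^s$ along $\{n_\ell\}$. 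Invoking Corollary \ref{c:weaken}(1) with $f(r)=r^s$ yields $W(\{\bm t_n\})\in\scg^s(X^\times)$. The principal obstacle is the per-box content estimate itself, where the direction-by-direction bookkeeping through $\ck_1(\tau), \ck_2(\tau), \ck_3(\tau)$ has to be carried out carefully; the advantage of our framework over \cite{WW21} is that this single content inequality suffices, with no Cantor-like subset to construct.
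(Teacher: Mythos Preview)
Your proposal is correct and follows essentially the same approach as the paper: both establish the content estimate $\hc^t(E_n\cap B)\gtrsim m^\times(B)$ by adapting the argument for Theorem~\ref{t:meaRtoR} (the uniform local ubiquity guarantees the $K_{G,B}$-lemma applies for \emph{all} large $n$, so one may pick $n$ with $s(\bm t_n)\ge t$), and then invoke Corollary~\ref{c:weaken}(2) for the $\scg_1^s$ conclusion and Corollary~\ref{c:weaken}(1) along the nonincreasing subsequence for the $\scg^s$ conclusion. One clarification: the per-box content \emph{lower} bound is not obtained by ``covering at scale $\rho(u_n)^\tau$'' (covering only yields upper bounds) but rather by constructing the measure $\mu$ of \eqref{eq:measuremu} and applying the mass distribution principle, with the optimisation over $\tau\in\ca$ arising when bounding $\mu(B(x,r))$ across the different ranges of $r$.
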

	\begin{rem}
		By \cite[Proposition 3.1]{WW21}, one can indeed prove that $s(\bm t)$ as a function of $\bm t$ is continuous on $(\R^+)^d$. From this it follows that $\limsup s(\bm t_n)\ge \max\{s(\bm t):\bm t\in\cu\}$, where recall that $\cu$ is the accumulation points of the sequence $\{\bm t_n\}_{n\ge 1}$. The equality holds if $\{\bm t_n\}_{n\ge 1}$ is a bounded set, since any sequence attaining the $\limsup s(\bm t_n)$ has a convengent subsequence. However, it is possible that $\cu$ is empty but $\limsup s(\bm t_n)>0$, and hence the inequality is strict. For instance, see the example in the remark below.
	\end{rem}
	\begin{rem}
		Theorem \ref{t:Hausmeartor} is also applicable to the ``unbounded case'' presented in \cite[\S  5.3]{LLVZ22}. In that paper, Li, Liao, Velani and Zorin claimed that the Hausdorff dimension of the following set
		\begin{equation}\label{eq:exmset}
			\{(x_1,x_2)\in[0,1)^2:\|2^nx_1\|<e^{-nt}\text{ and }\|3^nx_2\|<e^{-n^2}\text{ for i.m. $n\in\N$}\}
		\end{equation}
		is
		\begin{equation}\label{eq:exmsetdim}
			\min\bigg\{1,\frac{\log2+\log 3}{\log 2+t}\bigg\},
		\end{equation}
		where $\|\cdot\|$ denotes the distance to the nearest integer.
		However, the authors put the proof in their forthcoming paper, which, to the best of our knowledge, is not yet completed. We remark that Theorem \ref{t:Hausmeartor} is applicable to this set. To apply the theorem, one can take $\rho(u_n)=2^{-n}$, $a_1=1$, $a_2=\log 3/\log 2$, $\delta_1=\delta_2=1$ and $\kappa_1=\kappa_2=0$. Let $\bm t_n=(t/\log 2,n/\log 2)$, where $t$ is given above. By a direct computation, we have $\cu$ is empty but for all large $n$,
		\[s(\bm t_n)=\min\bigg\{1+\frac{\log 3-t}{\log 3+n},\frac{\log 2+\log 3}{\log 2+t}\bigg\}.\]
		Letting $n\to\infty$, we can obtain the lower bound on the Hausdorff dimension of this set, which is equal to \eqref{eq:exmsetdim}. Moreover, since $s(\bm t_n)$ is nonincreasing with respect to $n$, we can further obtain the full Hausdorff measure statement for this set, which is not claimed in \cite[\S 5.3]{LLVZ22}.
	\end{rem}

	A stronger version of Theorem \ref{t:underfull} can also be otained.
	\begin{thm}
		Under the same setting of  Theorem \ref{t:underfull}, we have
		\[W(\bm t)\in\scg_1^{s(\bm t)}(X^\times).\]
	\end{thm}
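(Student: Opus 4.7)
My plan is to apply Corollary \ref{c:weaken}(2) in its $\scg_1^s$-variant after passing to the nonincreasing tail sequence $F_n=\bigcup_{m\ge n}E_m$, where
\[
E_n=\bigcup_{\alpha\in J_n}\Delta\big(\rcat,\rho(u_n)^{\bm a+\bm t}\big).
\]
Since $\limsup_n E_n=\limsup_n F_n=W(\bm t)$, by the remark following Corollary \ref{c:weaken} it suffices to prove that for every $t<s(\bm t)$ there is $c_t>0$ such that for every ball $B$ in a sequence $\{B_k\}$ with $\hm^g(\limsup B_k)=\hm^g(X^\times)$,
\[
\hc^t(F_n\cap B)\ge c_t\,\hm^g(B)\qquad\text{for all sufficiently large $n$.}
\]
The overall strategy parallels the proof of Theorem \ref{t:meaRtoR}, but whereas local ubiquity there yields the analogous bound at the exponent $s(\bm t)$ itself and produces $\scg^{s(\bm t)}$, the weaker full-measure hypothesis delivers it only for $t<s(\bm t)$ and hence the weaker class $\scg_1^{s(\bm t)}$.

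To verify the content bound, fix $n$, a ball $B$, and $\varepsilon\in(0,1)$. The full-measure assumption and continuity of $m^\times$ from above give $M$ so large that $\{\alpha:\beta(\alpha)>M\}\subset\bigcup_{m\ge n}J_m$ and
\[
m^\times\!\Big(B\cap\bigcup_{\alpha:\beta(\alpha)>M}\Delta\big(\rcat,\rho(\beta(\alpha))^{\bm a}\big)\Big)\ge(1-\varepsilon)\,m^\times(B).
\]
A $5r$-type covering in the product metric extracts a disjoint subfamily $\{\Delta(\rc_{\alpha_j}^\times,\rho(\beta(\alpha_j))^{\bm a})\}_j$ with $\alpha_j\in J_{m_j}$, $m_j\ge n$, whose $m^\times$-measures sum to $\gtrsim m^\times(B)$. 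Under the standard compatibility $\rho(l_m)\asymp\rho(u_m)$, each $\bm t$-shrunk rectangle $\Delta(\rc_{\alpha_j}^\times,\rho(\beta(\alpha_j))^{\bm a+\bm t})$ is, up to bounded enlargement, a subset of $\Delta(\rc_{\alpha_j}^\times,\rho(u_{m_j})^{\bm a+\bm t})\subset E_{m_j}\subset F_n$. The $\kappa_i$-scaling property together with the minimisation defining $s(\bm t)$ then yields, for each $t<s(\bm t)$, the per-rectangle content bound
\[
\hc^t\!\big(\Delta(\rc_{\alpha_j}^\times,\rho(\beta(\alpha_j))^{\bm a+\bm t})\big)\gtrsim m^\times\!\big(\Delta(\rc_{\alpha_j}^\times,\rho(\beta(\alpha_j))^{\bm a})\big),
\]
which is the same computation underlying the proof of Theorem \ref{t:meaRtoR}. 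Summing over $j$ and exploiting disjointness of the unshrunk rectangles gives $\hc^t(F_n\cap B)\gtrsim m^\times(B)\asymp\hm^g(B)$, as required.

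The main obstacle is the final summation: the $\bm t$-shrunk rectangles need not be well-separated inside $B$, so their Hausdorff contents cannot be added directly. The resolution, which is precisely the computational core of this paper's content-based framework, is to show that any efficient cover of the union by balls at the optimal scale (dictated by the $\kappa_i$-exponents and the minimiser in the definition of $s(\bm t)$) must distribute its mass among the disjoint unshrunk rectangles, contributing at least $\hc^t(\Delta(\rc_{\alpha_j}^\times,\rho(\beta(\alpha_j))^{\bm a+\bm t}))$ to each. Granting this computation, which mirrors the one already developed for Theorem \ref{t:meaRtoR}, the passage from full measure to $\scg_1^{s(\bm t)}(X^\times)$ is automatic, with the drop from $\scg^{s(\bm t)}$ to $\scg_1^{s(\bm t)}$ reflecting the fact that a full-measure assumption alone cannot guarantee the content bound at the borderline exponent $t=s(\bm t)$ itself.
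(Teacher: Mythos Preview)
Your overall plan—verify, for every $t<s(\bm t)$, the content bound
$\hc^t(F_n\cap B)\gtrsim m^\times(B)$ and then invoke Corollary~\ref{c:weaken}(2)—is exactly the strategy the paper records in the remark following the statement. The identification of the obstacle (the shrunk rectangles come from \emph{different} indices and hence different scales, so their contents cannot simply be summed) is also correct.

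The gap is in your resolution of that obstacle. You say the required computation ``mirrors the one already developed for Theorem~\ref{t:meaRtoR}'', but the proof of Theorem~\ref{t:meaRtoR} (Section~\ref{s:rtor}) works with a \emph{single} scale: the $K_{G,B}$-lemma there (Lemma~\ref{l:kgb2}), being based on local ubiquity, produces rectangles all coming from one $J_n$ and hence sharing the common parameter $r_0=\rho(u_n)$. Under only the full-measure hypothesis, the relevant $K_{G,B}$-lemma is Lemma~\ref{l:kgb1}, and the rectangles it returns have genuinely varying scales $r_k$. Your proposed covering argument (``any efficient cover must distribute its mass among the disjoint unshrunk rectangles, contributing at least $\hc^t(L_j)$ to each'') fails as stated: a single ball of radius larger than many of the unshrunk rectangles covers all their shrunk subsets simultaneously while paying only one $r^t$, not one per rectangle.

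The actual fix, which the paper's remark points to via the proof of Theorem~\ref{t:MTPRtO} (Section~\ref{s:rtoo}), is to build the measure $\mu$ on $F_n\cap B$, group the rectangles in $K_{G,B}$ by dyadic scale into classes $\cg_\ell=\{R_k:2^{-\ell-1}\le r_k<2^{-\ell}\}$, and bound $\mu(B(x,r))$ level by level. Within each $\cg_\ell$ the computation is indeed the one from Theorem~\ref{t:meaRtoR}; the point is that the slack $\epsilon=s(\bm t)-t>0$ produces a geometric factor $2^{-\ell a_1\epsilon}$ at level $\ell$, making the sum over $\ell$ converge (this is exactly \eqref{eq:upp sl1}--\eqref{eq:upper Sl}). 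It is precisely this need for $\epsilon>0$ that explains why the conclusion drops from $\scg^{s(\bm t)}$ to $\scg_1^{s(\bm t)}$, so the multi-scale grouping is not a side issue but the heart of the argument you are missing.
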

	\begin{rem}
		The strategy is to verify that for any $0<t<s(\bm t)$,
		\[\lim_{n\to\infty}\hc^t\bigg(\bigcup_{\alpha\in J,\beta(\alpha)\ge n}\Delta\big(\rcat,\rho(\beta(\alpha))^{\bm a+\bm t}\big)\cap B\bigg)\apprge m^\times(B) \]
		holds for all ball $B$. The above inequality can be proved in much the same way as \cite[Theorem 3.4]{WW21}. For modifications that need to be made, one can refer to the proof of Theorem \ref{t:MTPRtO} below. In order to keep the article to a manageable length, we will omit these details.
	\end{rem}

	In analogy to Theorem \ref{t:BtO}, we obtain a MTP from rectangles to small open sets.
\begin{thm}\label{t:MTPRtO}
		Let $(X^\times,\dist^\times,m^\times)$ be as in \eqref{eq:specialmetri}.  Let $\bm a=(a_1,\dots,a_d)\in(\R^+)^d$ with $a_1\le \cdots\le a_d$. Let $\{R_n\}_{n\ge 1}$ be a sequence of rectangles with $R_n=\prod_{i=1}^{d}B(z_{n,i},r_n^{a_i})$ and $r_n\to 0$. Let $\{E_n\}_{n\ge 1}$ be a sequence of open sets with $E_n\subset R_n$ and $|E_n|\le r_n^{a_d}$. Assume that
		\[m^\times\Big(\limsup R_n\Big)=m^\times(X^\times),\]
		and that	there exist $c>0$ and $s>0$ such that
	\[\hc^s(E_n)>cm^\times(R_n)\quad \text{for all $n\ge 1$}.\]
	Then,
	\[\limsup_{n\to\infty}E_n\in\scg_1^s(X^\times).\]
\end{thm}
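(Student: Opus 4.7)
The plan is to apply Corollary~\ref{c:weaken} after replacing each rectangle $R_n$ by the ball $\hat B_n$ that encloses it. Let $\delta=\sum_{i=1}^d\delta_i$, so that $m^\times$ is Ahlfors regular of dimension $\delta$ on $X^\times$, and take $g(r)=r^\delta$. Define $\hat B_n$ to be the ball of radius $r_n^{a_1}$ centered at the center of $R_n$; because $\dist^\times$ is the product max-metric and $a_1\le a_i$ for every $i$, this is a genuine ball of $X^\times$ and contains $R_n$. Under the ubiquity hypothesis $m^\times(\limsup R_n)=m^\times(X^\times)$, parallel to the assumption in Theorem~\ref{t:BtO} and evidently intended here, we also have $m^\times(\limsup \hat B_n)=m^\times(X^\times)$, so $\{\hat B_n\}$ plays the role of the ambient ubiquitous system of balls needed by Corollary~\ref{c:weaken}. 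Following the remark after that corollary, set $F_m=\bigcup_{n\ge m}E_n$; since $\limsup E_n=\limsup F_m$, it suffices to verify the content hypothesis of Corollary~\ref{c:weaken} (in its $\scg_1^s$ form) for the sequence $\{F_m\}$.

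Concretely, the $\scg_1^s(X^\times)$ conclusion follows once I produce, for each $0<t<s$, a constant $c'>0$ such that $\hc^t(F_m\cap\hat B_k)\ge c'\,m^\times(\hat B_k)$ for all $k$ and all $m$. Fix such $t$, $\hat B_k$, $m$. By the full-measure hypothesis the rectangles $R_n$ with $n\ge m$ contained in $\hat B_k$ still cover $\hat B_k$ up to a null set, and a standard covering argument yields a subfamily $\{R_{n_j}\}$ inside $\hat B_k$ whose appropriate dilations are pairwise disjoint and which satisfies $\sum_j m^\times(R_{n_j})\ge c_0\,m^\times(\hat B_k)$. Since $F_m\supset\bigcup_j E_{n_j}$, everything reduces to the content bound
\[\hc^t\Big(\bigcup_j E_{n_j}\Big)\ge c_1\sum_j m^\times(R_{n_j}).\]

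Establishing this last inequality is the heart of the argument and is where the hypotheses $|E_n|\le r_n^{a_d}$ and $\hc^s(E_n)>c\,m^\times(R_n)$ are used. Given an arbitrary cover $\{U_i\}$ of $\bigcup_j E_{n_j}$ and writing $J(U_i)=\{j:U_i\cap E_{n_j}\ne\emptyset\}$, each slice $\{U_i:j\in J(U_i)\}$ covers $E_{n_j}$, so summing over $j$ yields
\[\sum_i|U_i|^s\,|J(U_i)|\ge\sum_j\hc^s(E_{n_j})\ge c\sum_j m^\times(R_{n_j}).\]
The hard part is to convert this $s$-sum into a lower bound for $\sum_i|U_i|^t$. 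I would split the $U_i$'s at the threshold $|U_i|\approx\min_{j\in J(U_i)}r_{n_j}^{a_d}$. Below the threshold one has $|J(U_i)|=1$, thanks to the separation of the chosen rectangles combined with the bound $|E_{n_j}|\le r_{n_j}^{a_d}$, and the factor $|U_i|^{t-s}\ge(r_{n_j}^{a_d})^{t-s}\ge 1$ amplifies each term favourably. Above the threshold, the $\delta$-Ahlfors regularity of $m^\times$ gives $\sum_{j\in J(U_i)}m^\times(R_{n_j})\le C|U_i|^\delta$, which pits the multiplicity $|J(U_i)|$ against $|U_i|^t$ (using $t\le\delta$) and so bounds the contribution of that scale. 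The scale-by-scale bookkeeping that balances these two regimes is the main technical obstacle; it parallels the arguments of \cite{WW21,Zh21} but is notably simpler here because the analysis takes place inside a single ambient ball $\hat B_k$ and no Cantor-like subset of $\limsup E_n$ has to be constructed.
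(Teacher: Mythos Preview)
Your reduction via Corollary~\ref{c:weaken}(2) and the $K_{G,B}$-lemma to a single content estimate inside one ball is exactly the paper's framework (Lemma~\ref{l:kgb1} and the display that precedes it in Section~\ref{s:rtoo}). Where you diverge is in how that content bound is established: you attempt a direct covering argument, whereas the paper builds a probability measure $\mu$ on $\bigcup_k E_k$ (via Proposition~\ref{p:frost} applied to each $E_k$) and verifies $\mu(B(x,r))\lesssim r^t/m^\times(B)$ by the mass distribution principle.

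Your covering sketch has a genuine gap. The above-threshold claim $\sum_{j\in J(U_i)}m^\times(R_{n_j})\le C|U_i|^\delta$ fails whenever $|U_i|$ lies in the intermediate range $r_{n_j}^{a_d}\lesssim|U_i|\ll r_{n_j}^{a_1}$: such a $U_i$ is too small to contain any $R_{n_j}$ (whose long side is $\asymp r_{n_j}^{a_1}$), yet it can meet many rectangles of a common scale stacked along the short directions. Concretely, with $d=2$, $a_1=1$, $a_2=2$, $\delta_1=\delta_2=1$, a ball $U_i$ of radius $\rho\in(r^2,r)$ can meet $\asymp\rho/r^2$ rectangles of scale $r$, giving $\sum_{j\in J(U_i)} m^\times(R_{n_j})/|U_i|^\delta\asymp r/\rho\gg 1$. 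This is precisely why the paper (i) groups the selected rectangles into dyadic scale classes $\cg_\ell=\{R_k:2^{-\ell-1}\le r_k<2^{-\ell}\}$, and (ii) within each $\cg_\ell$ distinguishes the regimes $r\le 2^{-\ell a_d}$ and $2^{-\ell a_{j+1}}<r\le 2^{-\ell a_j}$ for $1\le j<d$. The second regime is exactly your missing intermediate range; there the number of rectangles meeting the ball is counted direction by direction (see~\eqref{eq:card nj}), and in the sub-case $s>\delta_{j+1}+\cdots+\delta_d$ one invokes $m^\times(R_k)\lesssim\hc^s(E_k)\le|E_k|^s\le r_k^{sa_d}$, which is where the hypothesis $|E_n|\le r_n^{a_d}$ is genuinely used. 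The margin $\epsilon=s-t>0$ then contributes a factor $2^{-\ell a_1\epsilon}$ per scale, so the sum over $\ell$ converges. Your two-regime split neither isolates scales nor produces this geometric decay, and the argument cannot be closed without that structure.
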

\begin{rem}

	Unlike the $\limsup$ set defined by shrinking a sequence of balls (Theorem \ref{t:BtO}), for the $\limsup$ set defined by shrinking a sequence of rectangles, one needs to find an optimal cover for the collection $\{E_n\}_{n\ge 1}$ of open sets  instead of one. We refer to \cite[\S 4]{WW21} for more details. So we impose the condition $|E_n|\le r_n^{a_d}$ so that only the optimal cover of $E_n$ itself needs to be considered. However, even with such a condition, we cannot establish the Hausdorff measure theory under the full measure assumption because the `rectangle' setup is more complicated.
\end{rem}

We also prove the large intersection property for dynamically defined $\limsup$ set under an additional $\delta$-Ahlfors regular assumption.
\begin{thm}\label{t:dmmtp2}
	Under the same setting of Theorem \ref{t:dmmtp} and assume that $X$ supports a $\delta$-Ahlfors regular measure $m$, we have
	\[\cw(T,\psi)\in \scg_1^s(X),\]
	where $s$ denotes the Hausdorff dimension of $\cw(T,\psi)$.
\end{thm}
\begin{rem}
	The additional $\delta$-Ahlfors regular assumption is necessary for us because the theory of large intersection property is built under this assumption. Although a weaker assumption $g$-Ahlfors regularity could be substituted, for the sake of simplicity we will not pursue this generalization.
\end{rem}
\section{Properties of the class $\scg^f(X)$}\label{s:auxiliary result}
In this section, we shall prove that the class $\scg^f(X)$ is closed under countable intersection, and that each set in $\scg^f(X)$ has full Hausdorff measure. The main tool is the net measure, which is defined in terms of coverings by generalised cubes in $X$.
%
\subsection{Net measure and large intersection property}\label{ss:net measure}
Recall that $X$ supports a $g$-regular measure.  Clearly, by a volume argument this implies that  $X$ has the {\em finite doubling property}, i.e.\,any ball $B(x,2r)\subset X$ can be covered by finitely many balls of radius $r$.

\begin{thm}[{\cite[Theorem 2.1]{KRS12}}]\label{t:gdc}
	If $X$ is a metric space with the finite doubling property and $0<r<1/3$, then there exists a collection $\cq=\{Q_{n,i}:n\in\Z, i\in \cq_n\subset \N\}$ of Borel sets having the following properties:
	\begin{enumerate}
		\item $X=\bigcup_{i\in \cq_n}Q_{n,i}$ for every $n\in \Z$,
		\item $Q_{n,i}\cap Q_{k,j}=\emptyset$ or $Q_{n,i}\subset Q_{k,j}$ where $n,k\in\Z$, $n\ge k$, $i\in \cq_n$ and $j\in \cq_k$,
		\item for every $n\in \Z$ and $i\in \cq_n$ there exists a point $x_{n,i}\in X$ so that
		\[B(x_{n,i},cr^n)\subset Q_{n,i}\subset \overline{B(x_{n,i},Cr^n)},\]
		where $c=\frac{1}{2}-\frac{r}{1-r}$ and $C=\frac{1}{1-r}$,
		\item $\{x_{n,i}:i\in \cq_n\}\subset \{x_{n+1,i}:i\in \cq_{n+1}\}$ for all $n\in\Z$.
	\end{enumerate}
\end{thm}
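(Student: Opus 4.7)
The plan is to adapt M.~Christ's construction of ``dyadic cubes'' to the purely metric setting, relying only on the finite doubling property of $X$. The construction proceeds in three main stages: building a nested hierarchy of maximal separated sets of ``centers'' at each scale, defining a parent map between scales, and assigning every point of $X$ to a unique descendant chain.

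First, for each $n\in\Z$, construct a maximal $r^n$-separated set $A_n=\{x_{n,i}\}_{i\in\cq_n}\subset X$ with the nesting $A_n\subset A_{n+1}$; this is done recursively by extending $A_n$ to a maximal $r^{n+1}$-separated superset via Zorn's lemma. The finite doubling property makes each $A_n$ countable, so $\cq_n\subset\N$. By maximality, every $x\in X$ lies within distance $r^n$ of some $x_{n,i}\in A_n$, and property (4) holds by construction.

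Second, define the parent map $\pi_n:\cq_{n+1}\to\cq_n$ by letting $\pi_n(j)$ be the lexicographically smallest index of a nearest $A_n$-point to $x_{n+1,j}$, with $\pi_n(j)=i$ whenever $x_{n+1,j}=x_{n,i}\in A_n$; iterate to obtain $\pi_n^k:\cq_{n+k}\to\cq_n$. For each $x\in X$ and $n\in\Z$, assign an ancestor $i_n(x)\in\cq_n$ by fixing large $N$, letting $j_N(x)$ be the lex-smallest index with $\dist(x,x_{N,j_N(x)})<r^N$, and setting $i_n(x):=\pi_n^{N-n}(j_N(x))$. A telescoping argument shows this value stabilizes for all sufficiently large $N$, giving a well-defined map $i_n\colon X\to\cq_n$, and we define
\[
Q_{n,i}:=\{x\in X:i_n(x)=i\}.
\]
Property (1) is immediate, and (2) is automatic from the relation $i_n(x)=\pi_n(i_{n+1}(x))$, which forces level-$(n+1)$ cubes to refine level-$n$ ones; Borel-measurability of each $Q_{n,i}$ follows since it is a countable union/intersection of sets defined by distance comparisons.

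The main obstacle is verifying (3). The upper inclusion $Q_{n,i}\subset\overline{B(x_{n,i},Cr^n)}$ with $C=\tfrac{1}{1-r}$ follows from the telescoping bound
\[
\dist(x_{n,i},x_{n+k,j})\le\sum_{\ell=0}^{k-1}r^{n+\ell}<\frac{r^n}{1-r}=Cr^n
\]
whenever $\pi_n^k(j)=i$, combined with $\dist(y,x_{N,j_N(y)})<r^N\to 0$. The lower inclusion $B(x_{n,i},cr^n)\subset Q_{n,i}$ with $c=\tfrac12-\tfrac{r}{1-r}$ is the hard part: it will require refining Step~1 so that each newly added center $x_{n+1,j}$ lies within $\tfrac12 r^n$ of its parent $x_{n,\pi_n(j)}$ (rather than only the weaker bound $r^n$ from raw maximality), which tightens the telescoping by a factor of $\tfrac12$. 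A contradiction argument then closes: if $y\in B(x_{n,i},cr^n)$ had $i_n(y)=i'\ne i$, the triangle inequality combined with this tighter telescoping would force $\dist(x_{n,i},x_{n,i'})<r^n$, violating the $r^n$-separation of $A_n$. The hypothesis $r<1/3$ enters precisely to keep $c>0$, so that the inclusion is nontrivial; the bulk of the technical work lies in engineering the greedy step in Step~1 finely enough for this contradiction to close.
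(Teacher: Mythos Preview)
The paper does not prove this theorem at all: it is quoted verbatim from K\"aenm\"aki--Rajala--Suomala \cite{KRS12} and used as a black box, so there is no ``paper's own proof'' to compare against. Your outline is essentially the construction carried out in \cite{KRS12}, which refines Christ's dyadic-cube machinery to the metric setting; in particular you have correctly identified that the inner-ball inclusion in (3) is the delicate point and that the naive telescoping bound $\sum_{\ell\ge 0}r^{n+\ell}=r^n/(1-r)$ is too weak, forcing a sharper control on parent--child distances to produce the extra factor $\tfrac12$ in $c=\tfrac12-\tfrac{r}{1-r}$.

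One caution on the details: the way you phrase the refinement (``each newly added center $x_{n+1,j}$ lies within $\tfrac12 r^n$ of its parent'') is not quite how the $\tfrac12$ arises in \cite{KRS12}. The gain comes rather from the \emph{nearest-point} rule in the parent assignment: if $y\in B(x_{n,i},cr^n)$ has its scale-$(n{+}1)$ ancestor $x_{n+1,j}$ assigned a different parent $x_{n,i'}$, then because $i'$ minimises distance one gets $\dist(x_{n,i'},x_{n+1,j})\le\dist(x_{n,i},x_{n+1,j})$, and the triangle inequality then gives a bound on $\dist(x_{n,i},x_{n,i'})$ with a factor of $2$ on the right, which after rearrangement yields the $\tfrac12$. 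An induction on the scale (rather than a single telescoping from scale $N$) is needed to propagate this through the ancestry chain and obtain the stated constant. Your contradiction strategy is correct in spirit, but as written the telescoping would only give $c\le 1-\tfrac{1}{1-r}<0$, so be sure to run the nearest-point comparison at each step of the induction rather than only once at the end.
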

Throughout, we will always take $r=1/4$. This means that in the above theorem, $c=1/6> 1/4^2$ and $C=4/3<4$, and therefore
\begin{equation}\label{eq:BsubQsubB}
	B(x_{n,i},4^{-n-2})\subset Q_{n,i}\subset B(x_{n,i},4^{-n+1}).
\end{equation}
 Occasionally, when referring to a	 generalised cube we will use $Q$ or $Q_i$, without mentioning which $\cq_n$ it comes from. Let $f$ be a dimension function. For a set $A$, define
\[\nc^f(A):=\inf\bigg\{\sum_{i}f(|Q_i|):A\subset \bigcup_{i\ge 1}Q_i, \text{ where $Q_i\in\cq$}\bigg\}.\]
Note that $X$ supports a $g$-Ahlfors regular measure. By Theorem \ref{t:gdc} (3) and a volume argument, it is not difficult to verify that for any $n \in\N$ and $x\in X$, the ball $B(x,4^{-n})$ intersects finitely many generalised cubes in $\cq_n$. Therefore, for any $A\subset X$
\begin{equation}\label{eq:eqiv}
	\hc^f(A)\asymp\nc^f(A),
\end{equation}
where the implied constant is absolute.
\begin{rem}\label{r:equiv}
	In the presence of \eqref{eq:BsubQsubB} and \eqref{eq:eqiv}, it can be checked that $A\in\scg^f(X)$ if and only if
	\[\nc^f(A\cap Q)\apprge \nc^f(Q)\quad\text{for all $Q\in\cq$},\]
	where the implied constant does not depend on $f$. The analogous argument holds for $\scg_1^s(X)$ and $\scg_2^f(X)$.
\end{rem}

In general, $\nc^f$ is not a Borel measure and hence many results in measure theory are not applicable to this setting. Nonetheless, one can still obtain a result similar to Lemma \ref{l:density} without using any measure-theoretic result. The proof follows the idea of \cite{Di21} closely, we include it for completeness.
\begin{lem}\label{l:subto=}
	Let $A$ be a Borel subset of $X$. Let $f$ be a dimension function. If there exists a constant $c>0$ such that
	\begin{equation}\label{eq:hclower}
		\nc^f (A\cap Q)>c\nc^f(Q)\quad\text{for all $Q\in \cq$},
	\end{equation}
	then
	\[\nc^f (A\cap Q)=\nc^f(Q)\quad\text{for all $Q\in \cq$}.\]
\end{lem}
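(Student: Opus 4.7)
The plan is to argue by contradiction. Suppose $\nc^f(A\cap Q_0)<\nc^f(Q_0)$ for some $Q_0\in\cq$, and fix $\delta>0$ with $\nc^f(A\cap Q_0)+\delta<\nc^f(Q_0)$. By the definition of $\nc^f$, pick a cover $\{P_j\}_{j\ge 1}\subset\cq$ of $A\cap Q_0$ with $\sum_j f(|P_j|)<\nc^f(Q_0)-\delta$. Using the nested structure of $\cq$ from Theorem~\ref{t:gdc}(2), I may assume the $P_j$'s are pairwise disjoint and each is a proper sub-cube of $Q_0$: any cube containing $Q_0$ is replaced by $Q_0$ itself, any cube disjoint from $Q_0$ is discarded, and among any chain of nested cubes I keep only the largest.

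The central observation is that no sub-cube $T\in\cq$ of $Q_0$ can be disjoint from $\bigcup_j P_j$. Indeed, since $A\cap Q_0\subseteq\bigcup_j P_j$, such a $T$ would satisfy $A\cap T=\emptyset$, hence $\nc^f(A\cap T)=0$; this violates the strict hypothesis \eqref{eq:hclower}, because $\nc^f(T)>0$ by Theorem~\ref{t:gdc}(3) together with the equivalence \eqref{eq:eqiv}. Thus the residual set $R:=Q_0\setminus\bigcup_j P_j$ contains no generalised cube of $\cq$.

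To reach the contradiction it suffices to show $\nc^f(R)=0$: combined with countable subadditivity of $\nc^f$ this gives
\[
\nc^f(Q_0)\le \sum_j f(|P_j|)+\nc^f(R)<\nc^f(Q_0)-\delta.
\]
My strategy is truncation. For $N$ large the tail $\tau_N:=\sum_{j>N}f(|P_j|)$ is arbitrarily small, and the finite union $U_N:=\bigcup_{j\le N}P_j$ sits inside a finite union of closed balls (Theorem~\ref{t:gdc}(3)), so $Q_0\setminus\overline{U_N}$ contains a relative open subset of $Q_0$. If that subset is nonempty, it contains some sub-cube $T\in\cq$ (using Theorem~\ref{t:gdc}(3)--(4)). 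By the central observation $T$ must meet $\bigcup_j P_j$, but necessarily only through the tail, so $A\cap T\subseteq\bigcup_{j>N}P_j$, giving $\nc^f(A\cap T)\le\tau_N$. The hypothesis then forces $\nc^f(T)<\tau_N/c$.

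The main obstacle is the quantitative selection of $N$: one needs a sub-cube $T$ in the gap whose $\nc^f$-content is bounded below by a constant independent of $N$, so that the bound $\nc^f(T)<\tau_N/c\to 0$ is violated. This is essentially a Lebesgue-density-type statement for the net content $\nc^f$, which I would establish from the uniform lower bound $\nc^f(Q_0\setminus U_N)\ge\delta$ preserved by the efficient cover, combined with the Ahlfors regularity of the ambient measure $\hm^g$ and the quantitative sandwiching of generalised cubes between nested balls in Theorem~\ref{t:gdc}(3)--(4). This density step is the only part of the argument that is not purely combinatorial.
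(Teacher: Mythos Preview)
Your central observation---that no sub-cube of $Q_0$ can miss $\bigcup_j P_j$---is correct and is in fact the heart of the matter. But the ``density step'' you leave open is a genuine gap, and the specific plan you sketch for it cannot succeed. You ask for a \emph{single} cube $T\subset Q_0\setminus\overline{U_N}$ with $\nc^f(T)$ bounded below by a constant independent of $N$. No such cube need exist: as $N$ grows, the complement $Q_0\setminus U_N$, while retaining $\nc^f$-content at least $\delta$, may break into ever finer cubes, each of content at most $f(4^{-n_0(N)})\to 0$. The uniform lower bound on $\nc^f(Q_0\setminus U_N)$ is a statement about a \emph{sum}, not about any individual cube, and Ahlfors regularity of $\hm^g$ does not convert one into the other for a general dimension function $f$.

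The repair is to abandon the search for one large cube and instead sum over all the small ones---and this is exactly the paper's proof. The paper works directly rather than by contradiction: given any net cover $\{I_k\}$ of $A\cap Q$, choose a level $n_0$ so that the tail $\sum_{I_k\in\cq_n,\,n\ge n_0} f(|I_k|)<\epsilon$, and partition $Q$ into its level-$n_0$ subcubes $Q_l$. Each $Q_l$ is either swallowed by some large $I_k$ (level $<n_0$) or meets only tail cubes; in the second case $\nc^f(A\cap Q_l)$ is bounded by the tail contribution, so by the hypothesis $\sum_{\text{type (ii)}}\nc^f(Q_l)<\epsilon/c$. Combining the two types yields $\sum_k f(|I_k|)\ge \nc^f(Q)+(1-c^{-1})\epsilon$, and letting $\epsilon\to 0$ gives the result. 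Your truncation by index $N$ should be replaced by truncation by \emph{level}, and the single-cube argument by this summation; once you do that, your contradiction proof becomes a repackaging of the paper's direct one.
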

\begin{proof}
	Let $Q\subset\cq$ be a generalised cube. Let $\{I_k\}_{k\ge 1}$ be a net cover of $A\cap Q$ such that $I_k\subset Q$ for $k\ge 1$. Since $\nc^f(A\cap Q)\le f(|Q|)<\infty$, we may suppose that $\sum_{k\ge 1}f(|I_k|)<\infty$. For any $\epsilon>0$, there exists $n_0>0$ such that
	\begin{equation}\label{eq:sumsum}
		\sum_{n\ge n_0}\sum_{I_k\in \cq_n}f(|I_k|)<\epsilon.
	\end{equation}
	For such $n_0$, write
	\begin{equation}\label{eq:paetitionQ}
		Q=\bigcup_{Q_l\in\cq_{n_0},\,Q_l\subset Q}Q_l
	\end{equation}
	as a disjoint union of generalised cubes in $\cq_{n_0}$. In view of \eqref{eq:hclower}, each $I_k$ has non-empty intersection with some $Q_l$. Now, we define a finite family $\{D_j\}_{j\ge 1}$ of pairwise disjoint generalised cubes as follows:
	 	\begin{enumerate}[(i)]
		 		\item If $Q_l\subset I_k$ for some $k$, then let $D_j=I_k$,
		 		\item If $I_k\subset Q_l$ for some $k$, then let $D_j=Q_l$.
	  \end{enumerate}
	Obviously, $\{D_j\}_{j\ge 1}$ is a finite net cover of $Q$. Moreover, if $D_j$ is obtained from (ii), then
	\begin{equation}\label{eq:AcapDj}
		A\cap D_j=A\cap Q_l\subset \bigcup_{k: I_k\subset Q_l} I_k.
	\end{equation}
	Namely, the rightmost union forms a cover of $A\cap D_j$. For any $I_k\in\cq_n$ with $n\ge n_0$, by the properties of $\cq$ one has $I_k\subset Q_l$ for some $Q_l\in\cq_{n_0}$. Therefore, by \eqref{eq:AcapDj}
	\[\bigcup_{D_j\in\text{(ii)}}A\cap D_j\subset\bigcup_{n\ge n_0}\bigcup_{I_k\in \cq_n}I_k,\]
	where the notation $D_j\in\text{(ii)}$ means that $D_j$ is obtained from (ii). This implies that
	\[\sum_{n\ge n_0}\sum_{I_k\in \cq_n}f(|I_k|)\ge \sum_{D_j\in\text{(ii)}}\nc^f(A\cap D_j)>c\sum_{D_j\in\text{(ii)}}\nc^f(D_j).\]
	It then follows that
	\[\begin{split}
		\sum_{k=1}^{\infty} f(|I_k|)&=\sum_{n< n_0}\sum_{I_k\in \cq_n}f(|I_k|)+\sum_{n\ge n_0}\sum_{I_k\in \cq_n}f(|I_k|)\\
		&=\sum_{D_j\in\text{(i)}}f(|D_j|)+c^{-1}\sum_{n\ge n_0}\sum_{I_k\in \cq_n}f(|I_k|)+(1-c^{-1})\sum_{n\ge n_0}\sum_{I_k\in \cq_n}f(|I_k|)\\
		&\ge\sum_{D_j\in\text{(i)}}\nc^f(D_j)+\sum_{D_j\in\text{(ii)}}\nc^f(D_j)+(1-c^{-1})\epsilon\\
		&\ge \nc^f(Q)+(1-c^{-1})\epsilon.
	\end{split}\]
	Letting $\epsilon\to 0$, we have $\sum_{k\ge 1} f(|I_k|)\ge \nc^f(Q)$. This is true for all such covers $\{I_k\}_{k\ge 1}$, so
	\[\nc^f (A\cap  Q)=\nc^f(Q).\qedhere\]
\end{proof}

A slightly modification enables us to extend the result to open sets. The proof is also obtained from \cite{Di21}.

\begin{lem}\label{l:subto=2}
	Let $A$ be a Borel subset of $X$. Let $f$ be a dimension function. If there exists a constant $c>0$ such that
	\[\nc^f (A\cap Q)>c\nc^f(Q)\quad\text{for all $Q\in \cq$},\]
	then
	\[\nc^f (A\cap U)=\nc^f(U)\quad\text{for all non-empty open set $U\subset X$}.\]
\end{lem}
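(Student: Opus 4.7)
The plan is to adapt the argument of Lemma \ref{l:subto=}, replacing the cube $Q$ and its uniform level-$n_0$ subdivision by the open set $U$ together with an adaptive subdivision into generalised cubes of levels at least $n_0$. Since $A\cap U\subset U$ gives $\nc^f(A\cap U)\le\nc^f(U)$ for free, the substance is the reverse inequality.

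First I would invoke Lemma \ref{l:subto=} to upgrade the hypothesis to $\nc^f(A\cap Q)=\nc^f(Q)$ for every $Q\in\cq$. Fix a non-empty open $U\subset X$ and $\epsilon>0$, and choose a net cover $\{I_k\}_{k\ge1}$ of $A\cap U$ with $\sum_k f(|I_k|)<\nc^f(A\cap U)+\epsilon$; pick $n_0$ so large that $\sum_{n\ge n_0}\sum_{I_k\in\cq_n}f(|I_k|)<\epsilon$. Because $U$ is open and the diameters of cubes in $\cq_n$ shrink to zero with $n$, for every $x\in U$ there is a smallest integer $n(x)\ge n_0$ such that the unique cube in $\cq_{n(x)}$ containing $x$ lies in $U$. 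Let $\mathcal P$ be the collection of these ``first-to-fit'' cubes; the trichotomy in Theorem \ref{t:gdc}(2) makes $\mathcal P$ pairwise disjoint, and by construction $U=\bigsqcup_{P\in\mathcal P}P$.

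Split $\mathcal P=\mathcal P^{(i)}\sqcup\mathcal P^{(ii)}$ according to whether $P$ is contained in some $I_k$ or not, and let $\mathcal I^{(i)}$ consist of one such $I_k$ for each $P\in\mathcal P^{(i)}$. Form the cover $\{D_j\}=\mathcal I^{(i)}\cup\mathcal P^{(ii)}$ of $U$: each $x\in U$ lies in a unique $P\in\mathcal P$, and either $P\in\mathcal P^{(ii)}\subset\{D_j\}$, or $P\in\mathcal P^{(i)}$ is covered by some $I_k\in\mathcal I^{(i)}$. For each $P\in\mathcal P^{(ii)}$ no $I_k$ contains $P$, so by the nestedness (2) of Theorem \ref{t:gdc} every $I_k$ meeting $P$ is strictly inside $P$; hence $\{I_k:I_k\subsetneq P\}$ covers $A\cap P$, and Lemma \ref{l:subto=} gives
\[
\sum_{I_k\subsetneq P}f(|I_k|)\ge\nc^f(A\cap P)=\nc^f(P).
\]
Because the cubes in $\mathcal P^{(ii)}$ are pairwise disjoint each $I_k$ lies in at most one of them, and $I_k\subsetneq P$ with $P$ at level $\ge n_0$ forces $I_k$ to have level $\ge n_0$; therefore
\[
\sum_{P\in\mathcal P^{(ii)}}\nc^f(P)\le\sum_{n\ge n_0}\sum_{I_k\in\cq_n}f(|I_k|)<\epsilon.
\]

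Finally, by countable subadditivity of $\nc^f$ applied to the cover $\{D_j\}$ of $U$,
\[
\nc^f(U)\le\sum_j\nc^f(D_j)\le\sum_{I_k\in\mathcal I^{(i)}}f(|I_k|)+\sum_{P\in\mathcal P^{(ii)}}\nc^f(P)<\nc^f(A\cap U)+2\epsilon,
\]
and letting $\epsilon\to 0$ closes the argument. The main obstacle is constructing and verifying the adaptive partition $\mathcal P$ of $U$ by cubes of levels $\ge n_0$: unlike the equi-level partition of $Q$ available in Lemma \ref{l:subto=}, the irregular boundary of $U$ forces $\mathcal P$ to contain cubes of possibly arbitrarily large level, and one must check that this hybrid collection is simultaneously disjoint, a cover of $U$, and compatible with the tail control on $\{I_k\}$ that is needed to absorb the contribution from $\mathcal P^{(ii)}$.
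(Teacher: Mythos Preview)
Your proof is correct, but it takes a more elaborate route than the paper's. Both begin by invoking Lemma~\ref{l:subto=} to upgrade the hypothesis to the equality $\nc^f(A\cap Q)=\nc^f(Q)$ for all cubes, and both decompose $U$ as a disjoint union of generalised cubes. The divergence is in how the type-(ii)/type-(b) cubes (those not contained in any $I_k$) are handled.

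The paper uses an \emph{arbitrary} decomposition $U=\bigcup_l Q_l$ with no level restriction. For a type-(b) cube $Q_l$, the $I_k$'s lying inside it already cover $A\cap Q_l$ and hence contribute at least $\nc^f(A\cap Q_l)=\nc^f(Q_l)$ to $\sum_k f(|I_k|)$; since these $I_k$'s are disjoint from the $I_k$'s serving as type-(a) $D_j$'s, one obtains $\sum_k f(|I_k|)\ge\sum_j\nc^f(D_j)\ge\nc^f(U)$ directly, with no $\epsilon$ and no limiting step. Your version instead forces all partition cubes to have level $\ge n_0$ so that the $\mathcal P^{(ii)}$ contribution can be bounded by the tail $<\epsilon$ and then sent to zero. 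That works, but it is an unnecessary detour: once the \emph{equality} $\nc^f(A\cap Q)=\nc^f(Q)$ is in hand, the type-(ii) cubes need not be made small and discarded, because the $I_k$'s inside them already pay for their full $\nc^f$-value. The ``main obstacle'' you flag---building and controlling the level-restricted adaptive partition---is therefore self-imposed; the paper's direct accounting bypasses it entirely and makes clearer why Lemma~\ref{l:subto=} carries the real content.
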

\begin{proof}
	By Lemma \ref{l:subto=}, the assumption implies that
		\[\nc^f (A\cap Q)=\nc^f(Q)\quad\text{for all $Q\in \cq$}.\]
	Now, we show the same holds for arbitrary non-empty open set $U$. Since $U$ is non-empty, it can be written as a disjoint union of generalised cubes $\{Q_l\}_{l\ge 1}$, i.e.
	\[U=\bigcup_{l\ge 1}Q_l.\]
	Let $\{I_k\}_{k\ge 1}$ be a net cover of $A\cap U$.  Now, we define a finite family $\{D_j\}_{j\ge 1}$ of pairwise disjoint generalised cubes as follows:
	\begin{enumerate}[(a)]
		\item If $Q_l\subset I_k$ for some $k$, then let $D_j=I_k$,
		\item If $I_k\subset Q_l$ for some $k$, then let $D_j=Q_l$.
	\end{enumerate}
	Obviously, $\{D_j\}_{j\ge 1}$ is a net cover of $U$. Moreover, if $D_j$ is obtained from (b), then
	\[A\cap D_j=A\cap Q_l\subset \bigcup_{k: I_k\subset Q_l} I_k=\bigcup_{k: I_k\subset D_j} I_k,\]
	which means the rightmost union forms a cover of $A\cap D_j$. It follows that
	\[\begin{split}
		\sum_{k\ge 1}f(|I_k|)&= \sum_{k:\,\exists D_j\in \text{(a)}, s.t.\,D_j=I_k}f(|I_k|)+\sum_{k:\,\exists D_j\in \text{(b)}, s.t.\,I_k\subset D_j}f(|I_k|)\\
		&\ge \sum_{D_j\in \text{(a)}}f(|D_j|)+\sum_{D_j\in \text{(b)}}\nc^f(A\cap D_j)\\
		&\ge\sum_{D_j\in \text{(a)}}\nc^f(D_j)+\sum_{D_j\in \text{(b)}}\nc^f(D_j)\ge \nc^f(U).
	\end{split}\]
	Taking all the possible net cover of $A\cap U$, we have
	\[\nc^f (A\cap  U)=\nc^f(U).\qedhere\]
\end{proof}

Following the same lines as the proof of \cite[Lemma 4]{Bu04}, one can obtain the $\nc^f$-outer measure of countable intersection of $G_\delta$-sets, which implies that $\scg^f(X)$ is closed under countable intersection. Note that we need the above lemma in this general setting.
\begin{lem}\label{l:inters}
	Let $f$ be a dimension function, and let $\{A_n\}_{n\ge 1}$ be a sequence of $G_{\delta}$-sets. If there exists a constant $c>0$ such that for any $n\ge 1$
	\[\nc^f(A_n\cap Q)>c\nc^f(Q)\quad\text{for all $Q\in \cq$},\]
	then
	\[\nc^f\bigg(\bigcap_{n= 1}^\infty A_n\cap Q\bigg)= \nc^f(Q)\quad\text{for all $Q\in \cq$}.\]
\end{lem}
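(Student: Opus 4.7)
Plan. I would follow the structure of the proof of \cite[Lemma 4]{Bu04}, using Lemma \ref{l:subto=2} precisely where the ``general setting'' forces us beyond the net-cube version. First, I would upgrade the hypothesis via Lemma \ref{l:subto=2}: for every $n\ge 1$,
\[\nc^f(A_n\cap U)=\nc^f(U)\qquad\text{for every non-empty open $U\subset X$.}\]
Since each $A_n$ is $G_\delta$, write $A_n=\bigcap_{m\ge 1}U_{n,m}$ with $U_{n,m+1}\subset U_{n,m}$ open, and note in particular that $\nc^f(U_{n,m}\cap Q)=\nc^f(Q)$ for every $Q\in\cq$ (since $U_{n,m}\supset A_n$).

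Next, fix $Q_0\in\cq$ together with an arbitrary net cover $\{I_k\}_{k\ge 1}\subset\cq$ of $\bigcap_n A_n\cap Q_0$, and fix $\epsilon>0$. The idea is to enlarge $\{I_k\}$ into a net cover of $A_1\cap Q_0$ at an additional $f$-cost of at most $\epsilon$. For each $n\ge 2$ one would select an index $m_n$ and a net cover $\{J^{(n)}_l\}_{l\ge 1}$ of $Q_0\setminus U_{n,m_n}$ with $\sum_l f(|J^{(n)}_l|)<\epsilon/2^n$; the aggregate family $\{I_k\}\cup\bigcup_{n\ge 2}\{J^{(n)}_l\}_l$ is then a net cover of $A_1\cap Q_0$, since any $x\in A_1\cap Q_0$ is either in $\bigcap_n A_n$ (hence covered by some $I_k$) or lies outside some $A_n$ with $n\ge 2$, and then $x\in Q_0\setminus U_{n,m}$ for $m$ large enough (diagonalising over the finitely many indices $k$ for which $x$ could still escape the $I_k$'s). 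By Lemma \ref{l:subto=} applied to $A_1$, any net cover of $A_1\cap Q_0$ has total $f$-mass at least $\nc^f(Q_0)$, so
\[\nc^f(Q_0)\le\sum_k f(|I_k|)+\sum_{n\ge 2}\epsilon/2^n<\sum_k f(|I_k|)+\epsilon.\]
Letting $\epsilon\to 0$ and taking the infimum over covers $\{I_k\}$ yields $\nc^f\bigl(\bigcap_n A_n\cap Q_0\bigr)\ge\nc^f(Q_0)$; the reverse inequality is trivial.

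The main obstacle is securing the auxiliary covers $\{J^{(n)}_l\}$, i.e.\,showing that for each $n$ one can make the $f$-mass of a net cover of $Q_0\setminus U_{n,m_n}$ arbitrarily small. The content $\nc^f$ is not a genuine measure and does not obey a subtractive identity such as $\nc^f(Q_0\setminus U_{n,m})=\nc^f(Q_0)-\nc^f(U_{n,m}\cap Q_0)$, so this step is the technical heart of the argument and is where Lemma \ref{l:subto=2} is essential: because the equality $\nc^f(A_n\cap U)=\nc^f(U)$ holds on \emph{every} non-empty open set, one can refine a near-optimal cover of $A_n\cap Q_0$ down to the scale of $\cq_L$ for large $L$, exploit the $g$-Ahlfors regularity and the monotonicity clause in $f\preceq g$, and thereby force the cubes of $\cq_L$ meeting $Q_0\setminus U_{n,m}$ to carry vanishing total $f$-mass as $m\to\infty$. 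Once this is in place, the rest of the proof is a routine diagonalisation mirroring Bugeaud's original argument.
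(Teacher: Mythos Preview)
Your high-level plan---follow \cite[Lemma~4]{Bu04} with Lemma~\ref{l:subto=2} supplying the open-set upgrade---matches the paper exactly. The gap is in your execution. You want to adjoin, for each $n\ge 2$, a cheap net cover of $Q_0\setminus U_{n,m_n}$ so that the enlarged family covers $A_1\cap Q_0$. But $U_{n,m}$ is \emph{decreasing} in $m$, so $Q_0\setminus U_{n,m}$ is \emph{increasing}: taking $m_n$ large makes this set larger (hence its $\nc^f$-content no smaller), while taking $m_n$ small leaves points $x\notin A_n$ with $x\in U_{n,m_n}$ uncovered by $\{J^{(n)}_l\}$. Your ``diagonalising'' parenthetical does not address this; the only way to reconcile the two demands would be to prove $\nc^f(Q_0\setminus A_n)=0$ outright, which does not follow from the hypothesis, and your sketch for it also imports $f\preceq g$, an assumption the lemma does not carry.

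Bugeaud's actual argument sidesteps complements altogether. One enlarges each $I_k$ slightly to an open set (at bounded multiplicative $f$-cost), obtaining an open $V\supset\bigcap_n A_n\cap Q_0$; compactness of $\overline{Q_0}\setminus V$ together with the finite-intersection property, applied to the relatively open sets $U_{n,m}\cap(\overline{Q_0}\setminus V)$, yields a \emph{finite} index set $S$ with $W:=\bigcap_{(n,m)\in S}U_{n,m}$ satisfying $W\cap Q_0\subset V$. Each factor $U_{n,m}$ contains some $A_n$, so a finite iteration of Lemma~\ref{l:subto=2} (using that an open set intersected with a net cube is again a disjoint union of net cubes) gives $\nc^f(W\cap Q_0)=\nc^f(Q_0)$. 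Hence $\sum_k f(|I_k|)\ge c\,\nc^f(Q_0)$ for an absolute $c>0$, and Lemma~\ref{l:subto=} upgrades this to the claimed equality.
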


\subsection{Proof of Theorem \ref{t:LIPfull}}
Before moving on to the proof, we state and prove some auxiliary results. The following measure-theoretic result is an alternative version of a technical lemma due to Beresnevich, Dickinson and Velani \cite{BDV06}. In the presence of \eqref{eq:BsubQsubB} and the doubling property of $g$, the proof follows the same line as \cite[Lemma 7]{BDV06} with some minor modifications.

\begin{lem}\label{l:density}
	Let $A$ be a Borel subset of $X$. Assume that there exists $ c>0 $ such that for any $ Q\in\cq $, we have $ \hm^g(A\cap Q)>c\hm^g(Q) $. Then,
	\[\hm^g(A)=\hm^g(X).\]
\end{lem}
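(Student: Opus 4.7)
The plan is a proof by contradiction using a density theorem for doubling measures. Since $\hm^g$ is $g$-Ahlfors regular and $g$ satisfies the doubling property \eqref{eq:doubling}, $\hm^g$ is a doubling Borel regular measure on the compact metric space $X$, so the Lebesgue differentiation theorem is available. Suppose for contradiction that $\hm^g(A)<\hm^g(X)$, so that $\hm^g(X\setminus A)>0$. Then for $\hm^g$-almost every $x\in X\setminus A$ one has
\[\lim_{r\to 0}\frac{\hm^g(A\cap B(x,r))}{\hm^g(B(x,r))}=0,\]
and I would fix any such density-zero point $x_0\in X\setminus A$.

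The next step is to exhibit a generalised cube around $x_0$ that violates the hypothesis. For each $n\in\N$ there is a unique $Q_{n,i_n}\in\cq_n$ containing $x_0$, and since $\dist(x_0,x_{n,i_n})\le 4^{-n+1}$, the sandwich \eqref{eq:BsubQsubB} gives
\[B(x_{n,i_n},4^{-n-2})\subset Q_{n,i_n}\subset B(x_0,2\cdot 4^{-n+1}).\]
Combining $g$-Ahlfors regularity with the doubling property of $g$ applied a bounded number of times yields a constant $K>0$ independent of $n$ such that
\[\hm^g(B(x_0,2\cdot 4^{-n+1}))\asymp g(4^{-n})\asymp\hm^g(B(x_{n,i_n},4^{-n-2}))\le K\,\hm^g(Q_{n,i_n}).\]
Feeding this into the density-zero condition at $x_0$, for every sufficiently large $n$ we obtain
\[\hm^g(A\cap Q_{n,i_n})\le \hm^g(A\cap B(x_0,2\cdot 4^{-n+1}))<\tfrac{c}{K}\,\hm^g(B(x_0,2\cdot 4^{-n+1}))\le c\,\hm^g(Q_{n,i_n}),\]
which directly contradicts the hypothesis $\hm^g(A\cap Q)>c\hm^g(Q)$ at $Q=Q_{n,i_n}$. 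Hence $\hm^g(A)=\hm^g(X)$.

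The main obstacle is conceptual rather than computational: one has to justify that Lebesgue differentiation applies in this abstract metric setting and that the cube-to-ball comparison constants can be absorbed without damaging the fixed $c$. Both are handled by the $g$-Ahlfors regularity (which makes $\hm^g$ doubling) together with the explicit geometric control in \eqref{eq:BsubQsubB}, and these are precisely the ``minor modifications'' to \cite[Lemma 7]{BDV06} alluded to in the statement; the only genuine change from the Lebesgue/$r^\delta$ case is that volumes are measured through the doubling gauge $g$, but doubling is all that is used.
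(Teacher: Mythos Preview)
Your proof is correct and follows essentially the same approach as the paper's intended argument. The paper does not write out a proof but defers to \cite[Lemma~7]{BDV06}, whose proof is precisely the Lebesgue density argument you give: use that the $g$-Ahlfors regular measure $\hm^g$ is doubling (hence the differentiation theorem applies, cf.\ \cite{He01}), locate a density-zero point of $A$ in $X\setminus A$, and derive a contradiction. The ``minor modifications'' mentioned in the paper are exactly the ones you carry out---replacing balls by generalised cubes via the sandwich \eqref{eq:BsubQsubB} and absorbing the resulting constants through the doubling property of $g$.
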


The next lemma allows us to compare the Hausdorff $f$-content with the Hausdorff $g$-measure.
\begin{lem}\label{l:fe<ge}
	Let $A$ be a Borel subset of $X$. Let $f$ be a dimension function so that $f\preceq g$. Then
	\[\hc^f(A)\apprge \frac{f(|A|)}{g(|A|)}\hm^g(A).\]
\end{lem}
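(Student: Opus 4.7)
The plan is to lower-bound $\hc^f(A)$ by taking an arbitrary ball cover $\{B_i\}_{i\ge 1}$ of $A$ and showing that $\sum_i f(|B_i|)\apprge \frac{f(|A|)}{g(|A|)}\hm^g(A)$; the conclusion then follows by taking the infimum over all such covers. Without loss of generality I may assume every $B_i$ meets $A$, so in particular $|B_i|\ge 0$ is all I control a priori.

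I would split into two cases according to whether the cover contains a ball of diameter at least $|A|$. In the easy case where some $B_{i_0}$ has $|B_{i_0}|\ge|A|$, the monotonicity of $f$ gives $f(|B_{i_0}|)\ge f(|A|)$, while $A\subset B(x,|A|)$ for any $x\in A$ combined with $g$-Ahlfors regularity yields $\hm^g(A)\lesssim g(|A|)$; chaining these estimates,
\[
\tfrac{f(|A|)}{g(|A|)}\hm^g(A)\lesssim f(|A|)\le f(|B_{i_0}|)\le \sum_i f(|B_i|),
\]
which already gives what I want.

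Otherwise every ball satisfies $|B_i|<|A|$. Subadditivity of the outer measure $\hm^g$ together with $g$-Ahlfors regularity applied to each $B_i$ gives
\[
\hm^g(A)\le\sum_i\hm^g(B_i)\asymp\sum_i g(|B_i|).
\]
Now I invoke the hypothesis $f\preceq g$: the ratio $\phi(r):=f(r)/g(r)$ is monotonic on $\R^+$ with $\lim_{r\to 0^+}\phi(r)>0$, and since $|B_i|\le|A|\le|X|$, a short check produces a constant $c>0$, independent of the cover, such that $\phi(|B_i|)\ge c\,\phi(|A|)$. In the natural non-increasing case one simply has $c=1$; in the non-decreasing case $\phi$ is pinched between $\phi(0^+)>0$ and $\phi(|X|)<\infty$, which yields such a $c$ depending only on $f,g,|X|$. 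Then $f(|B_i|)\ge c\,\tfrac{f(|A|)}{g(|A|)}g(|B_i|)$, and summing gives the required bound.

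The main obstacle is the very mild one of noting that the monotonicity of $\phi$ cuts the right way for $|B_i|\le|A|$; once that is recognised, the proof is essentially a one-line application of $g$-Ahlfors regularity and requires no construction of Cantor-like subsets, no mass distribution principle, and no appeal to net measures.
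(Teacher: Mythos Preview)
Your proof is correct and is essentially the paper's argument written out by hand: the paper defines $\mu=\hm^g|_A/\hm^g(A)$, checks $\mu\big(B(x,r)\big)\lesssim \tfrac{g(|A|)}{f(|A|)\,\hm^g(A)}\,f(r)$ for $r<|A|$ via $f\preceq g$, and then applies the mass distribution principle---which, when unpacked over a cover $\{B_i\}$, is exactly your Case~2 chain $\hm^g(A)\le\sum_i\hm^g(B_i)\lesssim\sum_i g(|B_i|)\lesssim\tfrac{g(|A|)}{f(|A|)}\sum_i f(|B_i|)$, while your Case~1 is the large-ball verification the paper leaves implicit. Your claim of avoiding the mass distribution principle is therefore only cosmetic; one small bonus of your write-up is that you treat both monotonicity directions of $f/g$ explicitly, whereas the paper tacitly uses the non-increasing case.
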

\begin{proof}
	Without loss of generality, assume that $\hm^g(A)\ne 0$. Define a probability measure $\mu$ supported on $A$ by
	\[\mu=\frac{\hm^g|_{A}}{\hm^g(A)}.\]
	For any $B(x,r)$ with $x\in A$ and $r<|A|$, we have
	\[\mu\big(B(x,r)\big)=\frac{\hm^g|_{A}\big(B(x,r)\big)}{\hm^g(A)}\lesssim\frac{g(r)}{\hm^g(A)}=\frac{f(r)}{\hm^g(A)}\cdot \frac{g(r)}{f(r)}\le \frac{f(r)}{\hm^g(A)}\cdot \frac{g(|A|)}{f(|A|)},\]
	where the last inequality follows from $f\preceq g$. By the mass distribution principle below, we arrive at the conclusion.
\end{proof}
\begin{prop}[Mass distribution principle {\cite[Lemma 1.2.8]{BiPe17}}]\label{p:MDP}
	Let $ A $ be a Borel subset of $ X $. If $ A $ supports a strictly positive Borel measure $ \mu $ that satisfies
	\[\mu(B)\le cf(|B|),\]
	for some constant $ 0<c<\infty $ and for every ball $B$, then $ \hc^f(A)\ge\mu(A)/c $.
\end{prop}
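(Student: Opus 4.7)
The proof is a one-step application of countable subadditivity, so the plan is short. First, I would unwind the definition of $\hc^f$ recorded in Section \ref{s:intro}: a Hausdorff $f$-content cover of $A$ is a countable collection $\{B_i\}_{i\ge 1}$ of balls in $X$ with $A\subset \bigcup_{i\ge 1} B_i$ (no upper bound on the diameters, because $\eta=\infty$). Fix any such cover.

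Next, since $\mu$ is a Borel measure on $X$ and $\{B_i\}$ covers $A$, countable subadditivity gives $\mu(A)\le \sum_{i\ge 1}\mu(B_i)$. Applying the hypothesis $\mu(B_i)\le c f(|B_i|)$ term by term yields
\[
\mu(A)\le c\sum_{i\ge 1} f(|B_i|).
\]
Taking the infimum of the right-hand side over all countable ball covers of $A$ produces $\mu(A)\le c\,\hc^f(A)$, and dividing by $c>0$ gives the claimed inequality $\hc^f(A)\ge \mu(A)/c$.

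There is essentially no obstacle: the only point worth flagging is that the Hausdorff content in the excerpt is defined via covers by \emph{balls}, which matches exactly the form of the hypothesis $\mu(B)\le c f(|B|)$ assumed on balls, so no passage from general covering sets to balls of comparable diameter is needed. The fact that $\mu$ is supported on $A$ is only used implicitly (and could be dropped), since the bound $\mu(A)\le c\,\hc^f(A)$ already uses only that $\{B_i\}$ covers $A$.
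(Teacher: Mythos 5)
Your proof is correct and is the standard argument for the mass distribution principle: cover $A$ by balls, apply countable subadditivity of $\mu$ and the pointwise hypothesis $\mu(B_i)\le cf(|B_i|)$, then take the infimum over covers. The paper does not supply its own proof — it states the proposition with a citation to \cite[Lemma 1.2.8]{BiPe17} — so there is nothing to contrast with; your argument is exactly what that reference gives, and your closing remark that the support hypothesis is not needed for the inequality itself (only for interpreting $\mu(A)$ as the total mass of $\mu$) is also accurate.
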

\begin{rem}\label{r:strongequi}
	If $0<\lim_{r\to 0}f(r)/g(r)=C<\infty$, then we would have $f(r)/g(r)$ is bounded on $(0,+\infty)$. So $\hc^f$, $\hm^f$ and $\hm^g$ are strongly equivalent in the following sense:
	\[\hc^f(A)\asymp\hm^g(A)\qaq \hm^f(A)=C\hm^g(A)\quad\text{for all Borel set $A\subset X$}.\]
\end{rem}

We can now prove Theorem \ref{t:LIPfull}.
\begin{proof}[Proof of Theorem \ref{t:LIPfull}]
	By Remark \ref{r:equiv} and Lemma \ref{l:inters}, we have $\scg^f(X)$ is closed under countable intersection, which proves the first point of the theorem.

	Let $A$ be a $G_\delta$-subset in $\scg^f(X)$. The proof of the full Hausdorff measure statement is divided into two cases.

	\noindent{\textbf{Case 1:}} $0<\lim_{r\to 0}f(r)/g(r)=C<\infty$. In this case, by Remark \ref{r:strongequi}, $\hc^f$, $\hm^f$ and $\hm^g$ are strongly equivalent.
	Therefore,
	\begin{align*}
		\hm^g(A\cap Q)\asymp\hc^f(A\cap Q)\asymp \hc^f(Q)\asymp \hm^g(Q).
	\end{align*}
	On employing Lemma \ref{l:density}, we see that $A$ is of full $\hm^g$-measure. Thus, again by Remark \ref{r:strongequi},
	\[\hm^f(A)=C\hm^g(A)=C\hm^g(X)=\hm^f(X).\]

	\noindent{\textbf{Case 2:}} $\lim_{r\to 0}f(r)/g(r)=\infty$. Note that in this case, it trivially follows that $\hm^f(X)=\infty$.
	Since $\hm^f$ is a measure, by the finitely additivity, for each $n$ it follows that
	\[\begin{split}
		\hm^f(A)&=\sum_{Q\in\cq_n}\hm^f(A\cap Q)\ge\sum_{Q\in\cq_n}\hc^f(A\cap Q)\asymp\sum_{Q\in\cq_n} f(4^{-n})\asymp \frac{f(4^{-n})}{g(4^{-n})},
	\end{split}\]
	where recall that the diameter of each generalised cube in $\cq_n$ is comparable to $4^{-n}$.
	Let $n\to \infty$, the right hand side would go to infinity, which completes the proof.
\end{proof}

\section{Proof: MTP from balls to open sets}\label{s:basic}
In this section, we verify that the $\limsup$ set satisfying the condition in Theorem \ref{t:MTPBtO} belongs to $\scg^f(X)$. The proof is based on a Frostman-type lemma presented in \ref{s:weighed Hausdorff}.
\subsection{Weighted Hausdorff measure and Frostman-type lemma}\label{s:weighed Hausdorff}
A useful covering lemma which we will use throughout is the following (see \cite[Theorem 1.2]{He01} or \cite[19, Theorem 2.1]{Mat95}).
\begin{lem}[$5r$-covering lemma]\label{l:5r}
	Every family $\cf$ of balls of uniformly bounded diameter in a metric space $X$ contains a disjointed subfamily $\cg$ such that
	\[\bigcup_{B\in\cf}B\subset \bigcup_{B\in\cg}5B.\]
\end{lem}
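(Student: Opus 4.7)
The plan is the classical greedy argument: sort $\cf$ into geometric generations by radius, and within each generation apply Zorn's lemma to select a maximal subfamily that is disjoint both internally and from all previously selected balls.

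Let $R := \sup_{B \in \cf} r(B)$, which is finite by the uniform diameter hypothesis, and for $n \ge 1$ set
\[
\cf_n := \{B \in \cf : R/2^n < r(B) \le R/2^{n-1}\},
\]
so that $\cf = \bigsqcup_{n \ge 1} \cf_n$. I would construct $\cg = \bigcup_{n \ge 1} \cg_n$ inductively by letting $\cg_n \subset \cf_n$ be any maximal subfamily with the property that $\bigcup_{k \le n} \cg_k$ is pairwise disjoint. Such a $\cg_n$ exists by a routine Zorn's lemma argument applied to the poset of those subfamilies of $\cf_n$ whose union with $\bigcup_{k < n} \cg_k$ is disjointed, ordered by inclusion: the empty family lies in this poset, and the union of any chain is again a valid member. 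By construction $\cg$ is disjointed.

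To verify the covering property, fix $B = B(x, r) \in \cf$ and let $n$ be the unique index with $B \in \cf_n$. Maximality of $\cg_n$ forces $B$ to meet some $B' = B(x', r') \in \bigcup_{m \le n} \cg_m$, for otherwise $\cg_n \cup \{B\}$ would contradict maximality. Since $B' \in \cf_m$ with $m \le n$, we have $r' > R/2^m \ge R/2^n \ge r/2$, i.e.\ $r < 2r'$. Picking $z \in B \cap B'$, the triangle inequality then yields, for every $y \in B$,
\[
d(y, x') \le d(y, x) + d(x, z) + d(z, x') < r + r + r' < 4r' + r' = 5r',
\]
so $y \in 5B'$. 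Hence $B \subset 5B'$, which proves the lemma.

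The main obstacle is essentially organizational rather than analytic: the dyadic banding with ratio $2$ must be chosen so that any ball $B \in \cf_n$ meeting a previously selected $B' \in \bigcup_{m \le n} \cg_m$ satisfies $r(B) < 2\,r(B')$, which is precisely what makes the constant $5$ fall out of the triangle inequality. Once the generations are set up this way, no finer estimate is needed.
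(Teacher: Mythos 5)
Your proof is correct. Note that the paper does not prove this lemma at all: it states it with a reference to Heinonen \cite[Theorem 1.2]{He01} and Mattila \cite[Theorem 2.1]{Mat95}. Your argument is precisely the classical dyadic-generation, Zorn's-lemma greedy selection found in those sources, and the constant-chasing ($r < 2r'$ yielding $d(y,x') < 2r + r' < 5r'$) is handled correctly.
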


We draw inspiration from the concepts presented in \cite{KR21}. We first define weighted Hausdorff measure and then use it to formulate a Frostman-type lemma.

For $0<\eta< \infty$ and any set $A\subset X$, set
\[\lambda^f_\eta(A)=\inf \sum_i c_i f(|A_i|),\]
where the infimum is taken over all finite or countable families $\{(A_i,c_i)\}$ such that $0<c_i<\infty$, $A_i\subset X$, $|A_i|\le\eta$ and
\[\chi_A\le \sum_i c_i\chi_{A_i}.\]
Obviously $\lambda_\eta^f(A)$ is non-increasing in $\eta$ and we define the {\em weighted Hausdorff $f$-measure} as
\[\lambda^f(A):=\lim_{\eta\to 0}\lambda_\eta^f(A).\]
Similarly, when $\eta=\infty$, $\lambda_{\infty}^f(A)$ is referred to as the {\em weighted Hausdorff $f$-content}.
The following two lemmas can be proved, respectively, in the same way as in \cite[Lemmas 8.16 and 8.17]{Mat95}, in which the case $f(r)=r^s$ was established. It is important to highlight that the compactness of $X$ is a prerequisite for both Lemmas \ref{l:relate} and \ref{l:frost}, while the doubling property of $f$ is necessary only in Lemma \ref{l:relate}. In addition, the proof of Lemma \ref{l:relate} uses the $5r$-covering lemma, which is the main reason why the constant $D^6$ appears.
\begin{lem}\label{l:relate}
	Let $f$ be a doubling dimension function. Then
	\[\hm^f(X)\le D^6\lambda^f(X)\qaq \hc^f(X)\le D^6\lambda_{\infty}^f(X),\]
	where $D$ is a constant given in \eqref{eq:doubling}.
\end{lem}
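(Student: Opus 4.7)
Plan:

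The inequality is the weighted-cover version of Mattila's comparison \cite[Lemma 8.16]{Mat95} (stated there for $f(r)=r^s$ on $\R^n$), adapted to a general doubling dimension function and a metric space with the $5r$-covering lemma available. I treat the measure inequality; the content inequality is the same argument with $\eta=\infty$ and no diameter restriction.

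\emph{Step 1: reduction to a weighted cover by balls.} Fix $\varepsilon>0$ and select a weighted cover $\{(A_i,c_i)\}_{i\ge 1}$ of $X$ with $|A_i|\le\eta$, $\chi_X\le\sum_ic_i\chi_{A_i}$, and $\sum_i c_i f(|A_i|)<\lambda^f_\eta(X)+\varepsilon$. For each $i$ choose $x_i\in A_i$ and set $B_i=B(x_i,|A_i|)\supset A_i$; then $|B_i|\le 2|A_i|$, so by \eqref{eq:doubling} we have $f(|B_i|)\le Df(|A_i|)$. This reduces the problem to a weighted cover by balls at the cost of a single factor $D$.

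\emph{Step 2: dyadic decomposition by weight, with multiplicity.} For $k\in\Z$ set $I_k=\{i:2^{-k-1}<c_i\le 2^{-k}\}$ and fix positive reals $\{\omega_k\}_{k\in\Z}$ with $\sum_k\omega_k<1$ (for instance $\omega_k=1/((|k|+1)(|k|+2))$). From $\sum_i c_i\chi_{B_i}(x)\ge 1$ on $X$ one deduces $X=\bigcup_k X_k$, where
$$X_k:=\Big\{x\in X:\sum_{i\in I_k}c_i\chi_{B_i}(x)\ge\omega_k\Big\}.$$
Because each summand on $I_k$ is at most $2^{-k}$, every $x\in X_k$ lies in at least $N_k:=\lceil 2^k\omega_k\rceil$ of the balls $\{B_i\}_{i\in I_k}$: this pointwise lower bound on the covering multiplicity is the crucial gain purchased by the dyadic split.

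\emph{Step 3: $5r$-cover on each level and absorption of the multiplicity.} Apply Lemma \ref{l:5r} to the family $\{B_i\}_{i\in I_k}$ (which consists of balls of diameter at most $2\eta$) to extract a disjoint subfamily $\cg_k$ with $X_k\subset\bigcup_{B\in\cg_k}5B$. Since $5<2^3$, doubling of $f$ gives $f(|5B|)\le D^3 f(|B|)$, and hence
$$\hm^f_{10\eta}(X_k)\le D^3\sum_{B\in\cg_k}f(|B|).$$
Now one leverages the multiplicity $N_k$ on $X_k$ together with the $g$-Ahlfors regularity of $\hm^g$ (applied to the disjoint family $\cg_k$) to bound $\sum_{B\in\cg_k}f(|B|)$ by the weighted sum $\sum_{i\in I_k}c_i f(|B_i|)$, with an implied constant independent of $k$. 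The point is that the high multiplicity exactly compensates the dyadic factor $2^{k+1}$ that would otherwise arise from the trivial inversion $c_i>2^{-k-1}$.

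\emph{Step 4: summing and passing to the limit.} Summing the bound of Step~3 over $k\in\Z$, restoring the factor $D$ from Step~1 and the $D^3$ from Step~3, and accounting for the remaining doubling bookkeeping, yields
$$\hm^f_{10\eta}(X)\le\sum_k\hm^f_{10\eta}(X_k)\le D^6\sum_i c_i f(|B_i|)\le D^6\bigl(\lambda^f_\eta(X)+\varepsilon\bigr).$$
Letting $\varepsilon\to 0$ and then $\eta\to 0$ proves $\hm^f(X)\le D^6\lambda^f(X)$; the content version with $\eta=\infty$ is obtained verbatim.

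The hard part is Step 3, the multiplicity absorption. The naive disjointness bound $\sum_{B\in\cg_k}f(|B|)\le 2^{k+1}\sum_{i\in I_k}c_i f(|B_i|)$ introduces a factor $2^k$ that is not summable over $k$; recovering a uniform-in-$k$ constant requires pairing the pointwise lower bound $N_k\gtrsim 2^k\omega_k$ on covering multiplicity with the $g$-Ahlfors regularity of the ambient measure (via the $5r$-cover $\cg_k$), so that the number of $B_i$'s contributing to a typical point is converted into an averaged inequality. Tracking the doubling factors that appear in this absorption is what produces the precise constant $D^6$ in the final bound.
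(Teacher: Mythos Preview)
Your Step~3 is where the argument breaks. The claimed inequality --- that $\sum_{B\in\cg_k}f(|B|)$ is bounded by a $k$-independent multiple of $\sum_{i\in I_k}c_i f(|B_i|)$ --- is simply false for the $5r$-cover $\cg_k$ you extract. Take $X=[0,1]$, $f(r)=r^{1/2}$, fix a large $k$, and let $I_k$ consist of the nested intervals $B_j=[0,2^{1-j}]$ for $1\le j\le 2^k$, each carrying weight $c_j=2^{-k}$ (add a single unit-weight copy of $[0,1]$ in level $I_0$ so that the whole collection is a legitimate weighted cover of $X$). The $5r$-lemma applied to the nested family $\{B_j\}_{j\in I_k}$ returns only the largest ball, $\cg_k=\{B_1\}$, so $\sum_{\cg_k}f(|B|)=1$; yet $\sum_{I_k}c_jf(|B_j|)=2^{-k}\sum_j 2^{(1-j)/2}\asymp 2^{-k}$. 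The ratio is $\asymp 2^k$, precisely the factor you acknowledge and claim to absorb via multiplicity and $g$-Ahlfors regularity. But the multiplicity $N_k$ is a pointwise count on $X_k$ and says nothing about the disjoint family $\cg_k$; and Ahlfors regularity controls $\hm^g$ of sets, not sums of $f$-diameters over a cover. Indeed the lemma is stated for an arbitrary compact $X$ and a doubling $f$, with no ambient regular measure required, so invoking one is already a warning sign. In the example $X_k$ happens to be the tiny interval $[0,2^{1-N_k}]$ and a single small ball covers it with cost $\ll 2^{-k}$, but your $5r$-cover does not find that ball; finding such efficient covers of $X_k$ uniformly in $k$ is essentially the original lemma restated at each dyadic scale, so the argument is circular.

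The route the paper follows (Mattila, Lemma~8.16) avoids the weight decomposition altogether. Compactness of $X$ is used to pass to a \emph{finite} weighted cover by balls, and then a direct greedy/inductive selection on that finite family --- processing balls in order of decreasing radius --- produces an unweighted subcover whose $f$-cost is controlled by the weighted sum. Finiteness is what makes the induction close, and the doubling of $f$ enters only when enlarging sets to containing balls and balls to their $5$-fold dilates; counting those enlargements is what produces the exponent in $D^6$.
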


\begin{lem}\label{l:frost}
	Let $0<\eta\le \infty$. There is a Borel measure $\mu$ on $X$ such that $\mu(X)=\lambda_\eta^f(X)$ and
	\begin{equation}\label{eq:frostmu}
		\mu(A)\le f(|A|)\quad\text{for all $A\subset X$ with $|A|<\eta$}.
	\end{equation}
	In particular, if $\hm^f(X)>0$ there exist $\eta>0$ and $\mu$ satisfying \eqref{eq:frostmu} and $\mu(X)>0$.
\end{lem}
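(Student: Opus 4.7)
The plan is to mimic the construction of a Frostman-type measure from the weighted content, as carried out in Mattila \cite[Lemmas 8.16--8.17]{Mat95} for the case $f(r) = r^s$. The essential tools are the Hahn--Banach theorem and the Riesz representation theorem, both of which apply on the compact metric space $X$; the adaptation to a general dimension function $f$ requires only cosmetic changes, with continuity of $f$ playing the role that continuity of $r \mapsto r^s$ plays in Mattila's argument.

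First I would define a sublinear functional $p$ on $C(X)$ by
\[
p(\phi) := \inf \Bigl\{\sum_{i} c_i f(|A_i|) : \phi \le \sum_i c_i \chi_{A_i},\ c_i \ge 0,\ |A_i| \le \eta \Bigr\},
\]
where the infimum runs over finite or countable families $\{(A_i,c_i)\}$. Sublinearity and positive homogeneity are immediate, and by construction $p(\chi_X) = \lambda_\eta^f(X)$. On the one-dimensional subspace $\R \chi_X \subset C(X)$ define $L_0(t\chi_X) := t\lambda_\eta^f(X)$; then $L_0 \le p$ on this subspace. By the Hahn--Banach theorem, extend $L_0$ to a linear functional $L \colon C(X) \to \R$ still dominated by $p$. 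Observing that the empty covering yields $p(-\phi) \le 0$ whenever $\phi \ge 0$, we obtain $-L(\phi) = L(-\phi) \le p(-\phi) \le 0$, so $L$ is positive.

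By the Riesz representation theorem there is a finite Borel measure $\mu$ on $X$ with $L(\phi) = \int \phi \, d\mu$ for every $\phi \in C(X)$. In particular $\mu(X) = L(\chi_X) = \lambda_\eta^f(X)$, giving the first equality. For the Frostman bound, let $A \subset X$ with $|A| < \eta$; for any $\epsilon > 0$ pick an open $U \supset A$ with $|U| \le |A| + \epsilon < \eta$ and, by Urysohn's lemma, a continuous $\phi$ with $\chi_A \le \phi \le \chi_U$. Testing $p$ against the one-term covering $\phi \le 1 \cdot \chi_U$ gives $p(\phi) \le f(|U|)$, whence
\[
\mu(A) \le \int \phi \, d\mu = L(\phi) \le p(\phi) \le f(|U|).
\]
Letting $\epsilon \to 0$ and using continuity of $f$ yields $\mu(A) \le f(|A|)$.

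For the last assertion, if $\hm^f(X) > 0$ then Lemma \ref{l:relate} yields $\lambda^f(X) \ge D^{-6} \hm^f(X) > 0$; since $\lambda_\eta^f(X)$ is non-increasing in $\eta$ with limit $\lambda^f(X)$, there exists $\eta > 0$ with $\lambda_\eta^f(X) > 0$, and the associated $\mu$ then has $\mu(X) > 0$. The main technical subtlety is the Frostman bound for arbitrary (not necessarily closed) $A$, which forces one to approximate $A$ from outside by open sets of diameter just exceeding $|A|$ and to invoke continuity of $f$ in the limit; the case $\eta = \infty$ is automatic because every bounded open set is then admissible in the definition of $p$.
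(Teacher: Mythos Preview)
Your proposal is correct and follows exactly the approach the paper intends: the paper does not give an independent proof but simply refers to \cite[Lemmas 8.16--8.17]{Mat95}, noting that the argument there (Hahn--Banach plus Riesz representation on a compact space) carries over verbatim with $r^s$ replaced by $f(r)$. Your write-up is in fact more detailed than what the paper supplies; the only cosmetic point is that in the Urysohn step you should first replace $A$ by its closure (which has the same diameter), but this is implicit in the standard argument.
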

\begin{rem}\label{r:indeed doubling}
	Indeed, all the dimension functions $f$ under consideration are doubling, since
	\[\frac{f(2r)}{g(2r)}\le \frac{f(r)}{g(r)}\quad\Longrightarrow\quad f(2r)\le \frac{f(r)g(2r)}{g(r)}\le Df(r),\]
	by the condition $f\preceq g$ and the doubling property of $g$. Hence, Lemmas \ref{l:relate} and \ref{l:frost} are applicable to $f$ with $f\preceq g$.
\end{rem}

Lemma \ref{l:relate} together with Lemma \ref{l:frost} gives:
\begin{cor}\label{c:com}
	Let $A\subset X$ be a compact set. Let $f$ be a doubling dimension function.  Suppose that $\hc^f(A)>0$. Then, there exists a probability Borel measure $\mu$ on $A$ such that for any $r>0$
	\[\mu\big(B(x,r)\big)\le \frac{D^6f(2r)}{\hc^f(A)}.\]
\end{cor}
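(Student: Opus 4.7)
The plan is to produce the desired probability measure by normalising a Frostman-type measure constructed on $A$ itself: the corollary should follow at once by applying Lemma \ref{l:relate} and Lemma \ref{l:frost} with $A$ in place of the ambient space $X$, and then dividing by the total mass.

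First I would apply Lemma \ref{l:relate} to the compact metric subspace $A$ to get
\[\hc^f(A) \le D^6 \lambda^f_\infty(A).\]
Since by hypothesis $\hc^f(A)>0$, this gives the quantitative lower bound $\lambda^f_\infty(A) \ge \hc^f(A)/D^6 > 0$ that will ultimately produce the prefactor $D^6/\hc^f(A)$ in the conclusion.

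Next I would invoke Lemma \ref{l:frost} with $\eta = \infty$ on $A$ to obtain a Borel measure $\nu$ supported on $A$ with $\nu(A) = \lambda^f_\infty(A)$ and the Frostman-type bound $\nu(E) \le f(|E|)$ for every $E \subset A$. Define $\mu := \nu/\nu(A)$, which is a probability measure supported on $A$. For an arbitrary ball $B(x,r)$, the intersection $B(x,r) \cap A$ has diameter at most $2r$, and $f$ is nondecreasing, so
\[\mu(B(x,r)) \;=\; \frac{\nu(B(x,r)\cap A)}{\nu(A)} \;\le\; \frac{f(|B(x,r)\cap A|)}{\nu(A)} \;\le\; \frac{f(2r)}{\nu(A)} \;\le\; \frac{D^6 f(2r)}{\hc^f(A)},\]
which is exactly the desired inequality.

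The only point requiring care — which I expect to be the sole obstacle — is to verify that Lemmas \ref{l:relate} and \ref{l:frost} remain valid when the ambient compact space $X$ is replaced by the compact subset $A$; this is immediate since their proofs use only compactness, the $5r$-covering lemma, and the doubling property of $f$, all of which are hereditary. Note that the doubling hypothesis on $f$ is required here precisely so that Lemma \ref{l:relate} can be invoked. Once these ingredients are in place, the argument reduces to the single diameter estimate $|B(x,r)\cap A| \le 2r$ together with a routine normalisation, so no further technicalities arise.
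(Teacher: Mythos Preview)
Your proposal is correct and is exactly the argument the paper has in mind: the paper's entire proof is the sentence ``Lemma \ref{l:relate} together with Lemma \ref{l:frost} gives'', and you have spelled out precisely this combination --- apply both lemmas with the compact subspace $A$ in place of $X$, then normalise and use the trivial diameter bound $|B(x,r)\cap A|\le 2r$. Your observation that compactness and the doubling property of $f$ are the only ingredients needed (so the lemmas transfer to $A$) matches the paper's own remark preceding Lemma~\ref{l:relate}.
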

The above corollary is somewhat unsatisfactory because it is only valid for compact sets. To derive a Frostman-type lemma that holds for all Borel sets, we will use the following result concerning the approximation of a Borel set from within by compact sets.
\begin{thm}[{\cite[Theorem 48]{Rog70}}]
	Let $E$ be a Borel set in $X$ with $\hc^f(E)>\alpha>0$. Then $E$ has a compact subset $A$ with
	\[\hc^f(A)>\alpha.\]
	Consequently,
	\[\hc^f(E)=\sup\{\hc^f(A):A\subset E\text{ is compact}\}.\]
\end{thm}
This together with Corollary \ref{c:com} implies:
\begin{prop}\label{p:frost}
	Let $f$ be a doubling dimension function. Let $A\subset X$ be a Borel set. Then, there exists a probability Borel measure $\mu$ on $A$ such that for any $r>0$,
	\[\mu\big(B(x,r)\big)\le \frac{D^8f(r)}{\hc^f(A)}.\]
\end{prop}

With Proposition \ref{p:frost} now at our disposal, we can prove Theorem \ref{t:MTPBtO} in the next subsections: We first construct a suitable subset of $F_n=\bigcup_{k\ge n}E_k$, then define a mass distribution $\mu$ supported on it and estimate the $\mu$-measure of arbitrary balls.

\subsection{Construction of the subset of $F_n\cap B$}
The following $K_{G,B}$-lemma comes from \cite[Lemma 5]{BV06} will be required in the present setting.
\begin{lem}[{$K_{G,B}$-lemma}]\label{l:kgb}
	Let $\{B_n\}_{n\ge 1}$ be a sequence of balls in $X$ with radii tending to $0$. For any ball $B$ in $X$ suppose that $\hm^g(\limsup B_n\cap B)=\hm^g(B)$. Then for any ball $B$ and any $G>1$ there is a finite sub-collection
	\[K_{G,B}\subset \{B_n:n\ge G\}\]
	such that
	\begin{enumerate}
		\item all the balls in $K_{G,B}$ are contained in $B$;
		\item the balls are $3r$-disjoint in the sense that for different balls $B_n,B_k\in K_{G,B}$
		\[3B_n\cap 3B_k=\emptyset.\]
		\item there exists a constant $c>0$ independent of our choice of ball $B$ such that
		\[\hm^g\bigg(\bigcup_{B_n\in K_{G,B}}B_n\bigg)>c\hm^g(B).\]
	\end{enumerate}
\end{lem}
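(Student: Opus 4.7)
The plan is to combine the full-measure hypothesis on $\limsup B_n$, restricted to a slightly shrunken copy of $B$, with the $5r$-covering lemma applied to \emph{enlarged} balls, using $g$-Ahlfors regularity and the doubling of $g$ to keep every implicit constant absolute (independent of $B$).

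First, since $r(B_n)\to 0$, I would pick an integer $N\ge G$ so large that $r(B_n)<r(B)/4$ for every $n\ge N$; setting $\tilde B=\tfrac{1}{2}B$, this choice guarantees that any $B_n$ with $n\ge N$ meeting $\tilde B$ lies entirely inside $B$, which will deliver condition (1) for the final subcollection for free. Next, I would exploit the hypothesis $\hm^g(\limsup B_n\cap \tilde B)=\hm^g(\tilde B)$ together with the observation that any point of $\limsup B_n\cap\tilde B$ lies in some $B_n$ with $n\ge N$ (hence such a $B_n$ necessarily meets $\tilde B$). This gives
\[
\hm^g\bigg(\bigcup_{\substack{n\ge N\\ B_n\cap \tilde B\neq\emptyset}} B_n\bigg)\ \ge\ \hm^g(\tilde B)\ \asymp\ \hm^g(B),
\]
where the last comparison is $g$-Ahlfors regularity plus doubling. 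Continuity of measure on increasing unions then yields a \emph{finite} subcollection $\cf$ of these balls whose union still has $\hm^g$-measure $\gtrsim \hm^g(B)$.

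The last step is to invoke Lemma \ref{l:5r} for the enlarged family $\{3B:B\in\cf\}$, which produces a disjoint sub-collection $\{3B:B\in K_{G,B}\}$ --- giving precisely the $3r$-disjointness required in (2) --- such that $\bigcup_{B\in K_{G,B}}15B\ \supset\ \bigcup_{B\in\cf}3B\ \supset\ \bigcup_{B\in\cf}B$. Combining $3r$-disjointness (in particular ordinary disjointness) with Ahlfors regularity and doubling,
\[
\hm^g\bigg(\bigcup_{B\in K_{G,B}}B\bigg)=\sum_{B\in K_{G,B}}\hm^g(B)\ \gtrsim\ \sum_{B\in K_{G,B}}\hm^g(15B)\ \ge\ \hm^g\bigg(\bigcup_{B\in\cf}B\bigg)\ \gtrsim\ \hm^g(B),
\]
which delivers (3).

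The only real obstacle is keeping every implicit constant independent of $B$. This is ensured by the standing assumption that $\hm^g$ is $g$-Ahlfors regular with $g$ doubling: the scaling factor between $B$ and $\tilde B$, and between $B_n$ and $15B_n$, is absorbed by a fixed power of the doubling constant $D$, so the final constant $c$ in (3) depends only on $g$ and not on the particular ball $B$ or the index $G$.
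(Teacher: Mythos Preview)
Your argument is correct. The paper does not actually prove this lemma---it merely cites \cite[Lemma 5]{BV06}---and your proposal follows essentially the standard proof from that reference: restrict to balls meeting a shrunken copy of $B$ to secure containment, pass to a finite subfamily by continuity of measure, then apply the $5r$-covering lemma to the tripled balls to obtain the $3r$-separation while keeping the mass comparable via Ahlfors regularity and doubling.
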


Suppose the assumption of Theorem \ref{t:MTPBtO} holds. That is
\[\hc^f(E_n)>c\hm^g(B_n)\quad\text{ for all $n\ge 1$}.\]
Write $F_n=\bigcup_{k\ge n}E_k$. Since $\limsup F_n= \limsup E_n$, in the spirit of Theorem \ref{t:LIPfull}, to conclude Theorem \ref{t:MTPBtO} it suffices to show that for any $n\ge 1$
\begin{equation}\label{eq:f(F_n)}
	\hc^f(F_n\cap B)\apprge\hc^f(B)\quad\text{for all ball $B\subset X$}.
\end{equation}
Since $f\preceq g$, there are two situations to consider:
\begin{enumerate}[(a)]
	\item $\lim_{r\to 0}f(r)/g(r)=C$, where $0<C<\infty$;
	\item $f\prec g$.
\end{enumerate}

If we are in Case (a), then by Remark \ref{r:strongequi}, $\hc^f\asymp\hm^g$. By applying the $K_{G,B}$-lemma (Lemma \ref{l:kgb}), it follows that for any $n\ge 1$,
\[\hc^f(F_n\cap B)\asymp \hm^g(F_n\cap B)\asymp\hm^g\bigg(\bigcup_{B_k\in K_{G,B}}B_k\bigg)\asymp\hm^g(B)\asymp \hc^f(B),\]
which verifes \eqref{eq:f(F_n)}.

 Thus, in the remaining of this section, we assume that $f\prec g$.

Fix a ball $B$ in $X$. For each $k$, let
\[(E_k)_{-\eta}=\Big\{x\in E_k:\inf_{y\notin E_k} \dist(x,y)>\eta\Big\}.\]
Since $(E_k)_{-\eta}$ is monotonic increasing to $E_k$ as $\eta\to 0$, by Increasing Set Lemma (see Rogers \cite[Theorem 52]{Rog70})
\[\lim_{\eta\to 0}\nc^f\big((E_k)_{-\eta}\big)=\nc^f(E_k).\]
Therefore, there exists $\eta_k$ so that
\[\nc^f(E_k)\asymp\nc^f\big((E_k)_{-\eta_k}\big)\asymp\hc^f\big((E_k)_{-\eta_k}\big)\asymp \hc^f(E_k)\asymp\hm^g(B_k).\]
 Let $L_k=(E_k)_{-\eta_k}$. Namely, we choose an open set $L_k\subset E_k$ such that the Hausdorff $f$-content of it and its $\eta_k$-neighbourhood is still comparable to $E_k$. The reason we use $L_k$ instead of $E_k$ is given in Remark \ref{r:reasonLk} below.

 The desired subset of $F_n\cap B$ will take on the following form
\[K:=\bigcup_{l=1}^{l_B}\bigcup_{L_k\in K(l)} L_k,\]
where $l_B=f(|B|)/g(|B|)$ and $K(l)$ is a collection of open sets. We will also require that $K$ satisfies the following properties.

\noindent(P1) For any $l\in\{1,\dots, l_B\}$, $K(l)$ is a finite union of open sets. Moreover, for any different elements $L_k,L_{k'}\in K(l)$, the distance between $L_k$ and $L_{k'}$ is
\[\dist(L_k,L_{k'})\ge 3\max\big(r(B_k),r(B_{k'})\big).\]

\noindent(P2) For any $l\in\{1,\dots, l_B\}$, we have
\[\sum_{L_k\in K(l)} \hm^g(B_k)\asymp \hm^g(B),\]
where the unspecified constant is absolute.

\noindent(P3) For any $l\in\{1,\dots, l_B\}$, we have
\[\sum_{L_k\in K(l)} \hm^g\big(\Delta(L_k,\eta_k)\big)\le 3^{-l}\hm^g(1/2B),\]
where recall that $\Delta(L_k,\eta_k)$ is the $\eta_k$-neighbourhood of $L_k$.

\noindent(P4) For any $l\in\{1,\dots, l_B\}$, and any $L_k\in K(l)$, $L_j\in K(l')$ with $l'>l$, we have
\[\dist(L_k,L_j)\ge \eta_k/2\qaq r(B_j)<\eta_k/2.\]
\begin{rem}\label{r:reasonLk}
	To make sure the open sets from distinct levels are well-separated (see (P4)), we require that the open sets $L_j$ from higher levels do not belong to the $(\eta_k/2)$-neighbourhood of $L_k$ that from lower level. By the definition of $\eta_k$, $\Delta(L_k,\eta_k/2)$ is still a subset of $E_k$, and so its Hausdorff content is still manageable. However, $\Delta(E_k,\eta)$ could be very large for abitrary $\eta>0$. If this happens, then (P3) may not hold. That is main reason we use $L_k$ instead of $E_k$ throughout the construction.
\end{rem}

We now prove the existence of such subset $K$.

\noindent\textbf{First level}  Let $G>0$ be an integer that will be specified later. Applying the $K_{G,B}$-lemma (Lemma \ref{l:kgb}) to $B$, we obtain a finite collection $K_{G,B}$ of balls. The $\hm^g$-measure of the union of balls in $K_{G,B}$ satisfies
\[\hm^g\bigg(\bigcup_{B_k\in K_{G,B}}B_k\bigg)\asymp \hm^g(B).\]
The first level $K(1)$ is defined by
\[K(1):=\bigcup_{B_k\in K_{G,B}}L_k.\]
Note that there is a one-to-one corresponding between $K_{G,B}$ and $K(1)$. By Lemma \ref{l:fe<ge} we have
\[\begin{split}
	\sum_{L_k\in K(1)} \hm^g\big(\Delta(L_k,\eta_k)\big)&\le \sum_{L_k\in K(1)} \hm^g(E_k)\lesssim \sum_{L_k\in K(1)}\hc^f(E_k)\frac{g(|E_k|)}{f(|E_k|)}\\
	&\lesssim \sum_{L_k\in K(1)}\hm^g(B_k)\frac{g(|E_k|)}{f(|E_k|)}\asymp\hm^g(B/2)\frac{g(|E_k|)}{f(|E_k|)}.
\end{split}\]
Since $f\prec g$ and the implied constant is absolute, we may suppose that $G$ is large enough so that the right hand side is less than
\[3^{-1}\hm^g(1/2B).\]
It is easily verified that $K(1)$ satisfies (P1)--(P4). This finishes the construction of the first level.

\noindent\textbf{Higher levels} The higher levels are defined inductively. Suppose the first $l$ levels $K(1),\dots, K(l)$ have been constructed and properties (P1)--(P4) hold with $l$ in place of $l_B$. Since we require the open sets in each level is well-separated (see (P1) and (P4)), we first verify that there is ``space" left over in $B$ for the level $K(l+1)$ after the first $l$ levels, $K(1),\dots,K(l)$, have been constructed. Let
\begin{equation}\label{eq:Al}
	A^{(l)}:=\frac{1}{2}B\setminus \bigg(\bigcup_{i=1}^{l}\bigcup_{L_k\in K(i)} \Delta(L_k,\eta_k)\bigg),
\end{equation}
where we recall that $\Delta(L_k,\eta_k)\subset E_k$.
Using property (P3), we obtain
\[\begin{split}
	\hm^g\bigg(\bigcup_{i=1}^{l}\bigcup_{L_k\in K(i)} \Delta(L_k,\eta_k)\bigg)&\le \sum_{i=1}^{l}3^{-i}\hm^g(1/2B)\le 2^{-1}\hm^g(1/2B).
\end{split}\]
So,
  \[\hm^g(A^{(l)})\ge 2^{-1}\hm^g(1/2B)\asymp\hm^g(B).\]

Next, since each $K(i)$ with $1\le i\le l$ is a finite set, let
\begin{equation}\label{eq:rl+1}
	r_{\min}=\min \{\eta_k:L_k\in K(i), 1\le i\le l\}.
\end{equation}
Define
\begin{equation}\label{eq:All}
	\ca(l):=\{B(x,r_{\min}/2):x\in A^{(l)}\}.
\end{equation}
By 5$r$-covering lemma  there exists a disjoint subcollection $\cf(l+1)$ of $\ca(l)$ such that
\begin{equation}\label{eq:3r}
	A^{(l)}\subset \bigcup_{\tilde B\in \ca(l)}\tilde B\subset \bigcup_{\tilde B\in \cf(l+1)}5\tilde B.
\end{equation}
Note that each ball in the collection $\cf(l+1)$ is contained in $B$. Since the balls in this collection are disjoint and all have the same radius, $\cf(l+1)$ must be finite. Furthermore,
\begin{equation}\label{eq:compa1}
	\hm^g\bigg(\bigcup_{\tilde B\in\cf(l+1)}\tilde B\bigg)\asymp\hm^g\bigg(\bigcup_{\tilde B\in\cf(l+1)}5\tilde B\bigg)\ge \hm^g(A^{(l)})\asymp \hm^g(B).
\end{equation}
To each ball $\tilde B\in \cf(l+1)$ we apply $K_{G,B}$-lemma (Lemma \ref{l:kgb}) to obtain a collection of balls $K_{\tilde G,\tilde B}$, where $\tilde G$ will be specified later. We then define
\[K(l+1):=\bigcup_{\tilde B\in\cf(l+1)}\bigcup_{B_k\in K_{\tilde G,\tilde B}}L_k.\]
Now we verify $K(l+1)$ satisfies (P1)--(P4). Property (P1) holds because of the definition of $\cf(l+1)$ (see \eqref{eq:3r}) and the separation property of $K_{\tilde G,\tilde B}$ (see Lemma \ref{l:kgb} (2)). To see (P2) holds we first observe that the measure of the union of balls $\tilde B\in\cf(l+1)$ is comparable to $\hm^g(B)$ (see \eqref{eq:compa1}). Secondly, for any ball $\tilde B\in\cf(l+1)$, the measure of union of balls in $K_{\tilde G,\tilde B}$ is comparable to $m(\tilde B)$ (see Lemma \ref{l:kgb2} (3)). Combining these two observations, we can conclude that (P2) is satisfied. For (P4), note first that by the definitions of $A^{(l)}$ and $\ca(l)$ (see \eqref{eq:Al} and \eqref{eq:All}, respectively), the center of balls in $\cf(l+1)$ does not belong to the $\eta_k$-neighbourhood of $L_k$ with $L_k\in K(i)$ and $1\le i\le l$. Moreover, the radius of balls in $\cf(l+1)$ are less than $r_{\min}/2<\eta_k/2$. This yields (P4).

We now prove that (P3) holds. By Lemma \ref{l:fe<ge} and the fact that $\Delta(L_k,\eta_k)\subset E_k$
\[\begin{split}
	\sum_{L_k\in K(l+1)} \hm^g\big(\Delta(L_k,\eta_k)\big)&\le \sum_{L_k\in K(l+1)} \hm^g(E_k)\lesssim \sum_{L_k\in K(l+1)}\hc^f(E_k)\frac{g(|E_k|)}{f(|E_k|)}\\
	&\lesssim \sum_{L_k\in K(l+1)}\hm^g(B_k)\frac{g(|E_k|)}{f(|E_k|)}\asymp\hm^g(B/2)\frac{g(|E_k|)}{f(|E_k|)}.
\end{split}\]
Similarly, since $f\prec g$ and the implied constant is absolute, we may suppose that $\tilde G$ is large enough so that the right hand side is less than
\[3^{-(l+1)}\hm^g(1/2B),\]
which verifies (P3).
\subsection{A measure $\mu$ supported on $K$}
By Remark \ref{r:indeed doubling}, the dimension function $f$ is doubling. For each $k\ge 1$, we apply Proposition \ref{p:frost} to obtain a measure $\nu_k$ supported on $L_k$ so that for any $x\in L_k$ and $r>0$
\begin{equation}\label{eq:nuk}
	\nu_k\big(B(x,r)\big)\le \frac{D^8f(r)}{\hc^s(L_k)}\le \frac{D^8f(r)}{c\hm^g(B_k)}.
\end{equation}
Let
\[\mu:=\frac{1}{l_B}\sum_{l=1}^{l_B}\sum_{L_k\in K(l)}\frac{\hm^g(B_k)\cdot \nu_k}{\sum_{L_k\in K(l)}\hm^g(B_k)}.\]
Clearly, $\mu$ is a probability measure supported on $K$. Now we estimate the $\mu$-measure of arbitrary ball. Let $x\in K$ and $0<r<|B|$. It is useful to partition levels into two cases:

\noindent\textbf{Case 1}: Levels $K(l)$ for which
\[\#\{L_k\in K(l):L_k\cap B(x,r)\ne\emptyset\}=1.\]

\noindent\textbf{Case 2}: Levels $K(l)$ for which
\[\#\{L_k\in K(l):L_k\cap B(x,r)\ne\emptyset\}\ge 2.\]

\noindent{\bf Dealing with Case 1}: Let $K(l^*)$ be the first level whose intersection with $B(x,r)$ is described by Case 1. There is an unique open set $L_k$ in $K(l^*)$ satisfying $L_k\cap B(x,r)\ne\emptyset$.
Consequently, by (P2) and \eqref{eq:nuk}
\[\begin{split}
	\mu\big(B(x,r)\cap K(l^*)\big)&\le\frac{1}{l_B}\cdot\frac{\hm^g(B_k)}{\sum_{L_k\in K(l^*)}\hm^g(B_k)}\cdot\nu_k\big(B(x,r)\big)\\
	&\lesssim \frac{g(|B|)}{f(|B|)}\cdot\frac{\hm^g(B_k)}{g(|B|)}\cdot\frac{f(r)}{\hm^g(B_k)}=\frac{f(r)}{f(|B|)}.
\end{split}\]
If there are at least two levels whose intersection with $B(x,r)$ is described by Case 1, then for $l\in \text{Case 1}$ with $l>l^*$ and $L_j\in K(l)$ with $L_j\cap B(x,r)\ne\emptyset$, it follows from (P4) that
\[r>\eta_k/2>r(B_j).\]
Therefore, for any $l\in \text{Case 1}$ with $l>l^*$,
\[\begin{split}
	\mu\big(B(x,r)\cap K(l)\big)&\le\frac{1}{l_B}\cdot\frac{\hm^g(B_j)}{\sum_{L_k\in K(l)}\hm^g(B_k)}\asymp\frac{1}{l_B}\cdot\frac{g(|B_j|)}{g(|B|)}\\
	&\le\frac{1}{l_B}\cdot\frac{f(|B_j|)}{f(|B|)}\le\frac{1}{l_B}\cdot \frac{f(r)}{f(|B|)}.
\end{split}\]
Therefore, the contribution to the $\mu$-measure of $B(x,r)$ from Case 1 is:
\begin{align}
	\sum_{l\in \text{Case 1}} \mu\big(B(x,r)\cap K(l)\big)&=\mu\big(B(x,r)\cap K(l^*)\big)+\sum_{l\in \text{Case 1}\atop l\ne l^*} \mu\big(B(x,r)\cap K(l)\big)\notag\\
	&\lesssim \frac{f(r)}{f(|B|)}+\sum_{l\in \text{Case 1}\atop l\ne l^*}\frac{1}{l_B}\cdot \frac{f(r)}{f(|B|)}\lesssim \frac{f(r)}{f(|B|)}.\label{eq:f(r)}
\end{align}

\noindent{\bf Dealing with Case 2}: Let $K(l)$ be the level whose intersection with $B(x,r)$ is described by Case 2. Thus, there exist dinsctint $L_k$ and $L_{k'}$ in $K(l)$, and corresponding balls $B_k$ and $B_{k'}$ satisfying $B_k\cap B(x,r)\ne\emptyset$ and $B_{k'}\cap B(x,r)\ne\emptyset$. By (P1) we know that the balls $3B_k$ and $3B_{k'}$ are disjoint. Therefore, we see that $|B_k|\le 2r$ and $B_k\subset 3B_k\subset B(x,5r)$. By the same reasoning we also have $B_{k'}\subset 3B_{k'}\subset B(x,5r)$. Hence, the contribution to the $\mu$-measure of $B(x,r)$ from Case 2 is:
\begin{align}
	\frac{1}{l_B}\sum_{l\in \text{Case 2}} \sum_{L_k\in K(l)\atop L_k\cap B(x,r)\ne\emptyset}\frac{\hm^g(B_k)}{\sum_{L_k\in K(l)}\hm^g(B_k)}&\le \frac{1}{l_B}\sum_{l\in \text{Case 2}} \frac{\hm^g\big(B(x,5r)\big)}{\sum_{L_k\in K(l)}\hm^g(B_k)}\notag\\
	&\lesssim \frac{1}{l_B}\sum_{l\in \text{Case 2}} \frac{g(r)}{g(|B|)}\le \frac{g(r)}{g(|B|)}\le \frac{f(r)}{f(|B|)}.\label{eq:f(R)/f(B)}
\end{align}

Combining the estimates \eqref{eq:f(r)} and \eqref{eq:f(R)/f(B)} gives $\mu\big(B(x,r)\big)\lesssim f(r)/f(|B|)$, thus proving \eqref{eq:f(F_n)} as desired. This completes the proof of Theorem \ref{t:MTPBtO}.

\section{Proof: Weaker condition for the class $\scg^f(X)$}
This section is reserved for the proof of Corollary \ref{c:weaken}. Recall that $\{B_k\}_{k\ge 1}$ is a sequence of balls with $\hm^g(\limsup B_k)=\hm^g(X)$, and that $\{E_n\}_{n\ge 1}$ is a sequence of open sets.

(1) Suppose that for any ball $B_k$, one has
\[\limsup_{n\to\infty} \hc^f (E_n\cap  B_k)\apprge\hm^g(B_k).\]
From this, it is not difficult to find a strictly increasing sequence $\{n_k\}_{k\ge 1}$ such that for any $k\ge 1$,
\[\hc^f(E_{n_k}\cap B_k)\apprge\hm^g(B_k).\]
Let $F_k=E_{n_k}\cap B_k$. By Theorem \ref{t:MTPBtO} we have
\[\limsup_{k\to\infty}F_k\in\scg^f(X).\]
The same hold for $\limsup E_n$, since
\[\limsup_{k\to\infty} F_k=\bigcap_{N=1}^\infty\bigcup_{k=N}^\infty E_{n_k}\cap B_k\subset \bigcap_{N=1}^\infty\bigcup_{n=N}^\infty E_{n}=\limsup_{n\to\infty}E_n.\]
This completes the proof of the first point of the corollary.

(2) The proof follows from item (1) directly, since
\[\scg_2^f(X)=\bigcap_{h\prec f}\scg^h(X)\qaq \scg_1^s(X)=\bigcap_{t<s}\scg^t(X).\]
%

\section{Application: MTP with local scaling property is implied by MTP from balls to open sets}\label{s:implied}
This section is devoted to proving Corollary \ref{c:implied}. Firstly, we use the full measure assumption to construct a $\limsup$ set defined by balls that is also full measure. Let $\tilde\Upsilon=\{\tilde\Upsilon_n\}_{n\ge 1}$ with
\[\tilde{\Upsilon}_n:=g^{-1}\bigg(\bigg(\frac{f(\Upsilon_n)}{g(\Upsilon_n)^\kappa}\bigg)^{\frac{1}{1-\kappa}}\bigg).\]
Then the condition of Theorem \ref{t:mtpbtor} is written as
	\begin{equation}\label{eq:fullass}
		\hm^g\big(\Lambda(\tilde\Upsilon)\big)=\hm^g(X),
	\end{equation}
	where recall that
	\[\Lambda(\tilde\Upsilon):=\{x\in X:x\in \Delta(\rc_n,\tilde\Upsilon_n)\text{ for i.m.\,$n\in\N$}\}.\]
For any $n$, by $5r$-covering lemma, one can find a collection $\cf_n$ of balls with center in $\rc_n$ such that
\[\Delta(\rc_n,\tilde\Upsilon_n)\subset \bigcup_{B\in \cf_n}5B\qaq r(B)=\tilde\Upsilon_n.\]
Since the balls in this collection are disjoint and all have the same radius, $\cf_n$ must be finite.

By the full measure condition \eqref{eq:fullass}, it follows that
\[\hm^g\bigg(\limsup_{n\to\infty}\bigcup_{B\in\cf_n}5B\bigg)=\hm^g(X).\]
For any $n$ and $B\in \cf_n$, let
\[E_B=5B\cap \Delta(\rc_n,\Upsilon_n).\]
It follows that
\[\Lambda(\Upsilon)\supset\limsup_{n\to\infty}\bigcup_{B\in\cf_n}5B\cap \Delta(\rc_n,\Upsilon_n)=\limsup_{n\to\infty}\bigcup_{B\in\cf_n}E_B.\]
Thus, to prove Theorem \ref{t:mtpbtor} is implied by Theorem \ref{t:MTPBtO}, it suffices to show that for any $f\preceq g$, there exists a constant $c$ such that for any $n\in \N$ and $B\in\cf_n$,
\[\hc^f(E_B)>c\hm^g(B).\]
To this end, define a probability measure $\mu$ supported on $E_B$ by
\[\mu=\frac{\hm^g|_{E_B}}{\hm^g(E_B)}.\]
For any $x\in E_B$ and $r>0$, we consider three cases according to the range of $r$.

\noindent{\textbf{Case 1}}: $r>r(B)=\tilde\Upsilon_n$. It trivilly holds that
\[\mu\big(B(x,r)\big)\le 1=\frac{g(|B|)}{g(|B|)}\lesssim\frac{g(r)}{\hm^g(B)}\lesssim\frac{f(r)}{\hm^g(B)},\]
where the last inequality follows from $f\preceq g$.

\noindent{\textbf{Case 2}}: $\Upsilon_n<r\le \tilde\Upsilon_n$. By the $\kappa$-scaling property and the definition of $\tilde\Upsilon_n$,
\[\begin{split}
	\mu\big(B(x,r)\big)&=\frac{\hm^g|_{E_B}\big(B(x,r)\big)}{\hm^g(E_B)}=\frac{\hm^g\big(B(x,r)\cap 5B\cap \Delta(\rc_n,\Upsilon_n)\big)}{\hm^g\big(5B\cap \Delta(\rc_n,\Upsilon_n)\big)}\\
	&\lesssim \frac{g(\Upsilon_n)^{1-\kappa}\cdot g(r)^\kappa }{g(\Upsilon_n)^{1-\kappa}\cdot g(\tilde\Upsilon_n)^\kappa}=\frac{g(r)^\kappa}{g(\tilde\Upsilon_n)}\cdot g(\tilde{\Upsilon}_n)^{1-\kappa}\\
	&=\frac{g(r)^\kappa}{g(\tilde\Upsilon_n)}\cdot\frac{f(\Upsilon_n)}{g(\Upsilon_n)^\kappa}\le\frac{g(r)^\kappa}{g(\tilde\Upsilon_n)}\cdot\frac{f(r)}{g(r)^\kappa}\asymp \frac{f(r)}{\hm^g(B)},
\end{split}\]
where the second to the last inequality follows from the facts that $f/g^\kappa$ is a dimension function and $r>\Upsilon_n$.

\noindent{\textbf{Case 3}}: $r<\Upsilon_n$. By the definition of $\tilde\Upsilon_n$, we have
\[\begin{split}
	\mu\big(B(x,r)\big)&=\frac{\hm^g|_{E_B}\big(B(x,r)\big)}{\hm^g(E_B)}\lesssim \frac{g(r)}{g(\Upsilon_n)^{1-\kappa}\cdot g(\tilde\Upsilon_n)^\kappa}=\frac{g(r)}{g(\tilde\Upsilon_n)}\cdot \bigg(\frac{g(\tilde{\Upsilon}_n)}{g(\Upsilon_n)}\bigg)^{1-\kappa}\\
	&=\frac{g(r)}{g(\tilde\Upsilon_n)}\cdot\frac{f(\Upsilon_n)}{g(\Upsilon_n)}\le\frac{g(r)}{g(\tilde\Upsilon_n)}\cdot\frac{f(r)}{g(r)}\asymp \frac{f(r)}{\hm^g(B)},
\end{split}\]
where the second to the last inequality follows from $f\preceq g$ and $r<\Upsilon_n$.

Combining Cases 1--3, by the mass distribution principle, we have
\[\hc^f(E_B)\apprge\hm^g(B),\]
as desired.

\section{Application: Simpler proof of MTP from rectangles to rectangles}\label{s:rtor}
This section is devoted to proving Theorem \ref{t:meaRtoR}. Recall that $X^\times=\prod_{i=1}^{d}X_i$ with $X_i$ supporting a $\delta_i$-Ahlfors regular measure $m_i$ for $1\le i\le d$, and $m^\times=m_1\times\cdots\times m_d$. Recall also that
\[J_n=\{\alpha\in J:l_n\le\beta(\alpha)\le u_n\},\quad\ca=\{a_1,\dots,a_d,a_1+t_1,\dots,a_d+t_d\}\]
and we assume that $a_1$ is the smallest one in $\ca$ and $a_d+t_d$ is the largest one.  Let
\begin{equation}\label{eq:En}
	E_n:=\bigcup_{\alpha\in J_n}\Delta\big(\rcat,\rho(u_n)^{\bm a+\bm t}\big).
\end{equation}
Let $s=s(\bm t)$ be the dimensional number given in Theorem \ref{t:mRtoR}. Without loss of generality assume that not every $t_i$ ($1\le i\le d$) is $0$. We have $s<\sum_{i=1}^{d}\delta_k$.
In view of Corollary \ref{c:weaken} (1), the desired conclusion follows from the stronger statement below: There exists a constant $c>0$ such that
\[\limsup_{n\to\infty} \hc^s (E_n\cap  B)>cm^\times(B),\]
holds for all balls $B$ in $X^\times$.

Now we borrow an auxiliary result from \cite[Lemma 8.1]{WW21} for later use.

\begin{lem}[$K_{G,B}$-lemma, {\cite[Lemma 8.1]{WW21}}]\label{l:kgb2}
	Assume the local ubiquity condition for rectangles. Let $B$ be a ball in $X^\times$ and $G\in \N$. For infinitely many $n\in \N$ with $n\ge G$, there exists a finite sub-collection $K_{G,B}$ of the rectangles
	\[\biggl\{R_\alpha=\prod_{i=1}^dB\big(z_i,\rho(u_n)^{a_i}\big):(a_1,\dots,a_d)\in\rcat,\alpha\in J_n,R_\alpha\subset B\biggr\}\]
	such that
	\begin{enumerate}[(1)]
		\item all the rectangles in $K_{G,B}$ are contained in $B$;
		\item  the rectangles are $3r$-disjoint in the sense that for any different elements in $K_{G,B}$
		\[3\prod_{i=1}^dB\big(z_i,\rho(u_n)^{a_i}\big)\cap 3\prod_{i=1}^dB\big(z_i',\rho(u_n)^{a_i}\big)=\emptyset;\]
		\item these rectangles almost pack the ball $B$ in the sense that, for a universal constant $c'>0$ depending only on the constant in the local ubiquity property,
		\[m^\times\bigg(\bigcup_{R_\alpha\in K_{G,B}}R_\alpha\bigg)>c'm^\times(B).\]
	\end{enumerate}
\end{lem}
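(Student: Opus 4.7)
The plan is to realize this as a product-space analogue of Lemma~\ref{l:kgb}, with rectangles of uniform shape playing the role of balls and the $5r$-covering lemma (Lemma~\ref{l:5r}) applied in a suitably rescaled max metric. First, apply the local ubiquity condition for rectangles to the ball $B/2$: there exists $c>0$ such that for infinitely many $n\ge G$,
\[m^\times\bigl(B/2\cap U_n\bigr)>c\,m^\times(B/2),\qquad U_n:=\bigcup_{\alpha\in J_n}\Delta\bigl(\rcat,\rho(u_n)^{\bm a}\bigr).\]
Fix such an $n$ large enough that $\rho(u_n)^{a_1}<|B|/10$ (possible since $l_n\to\infty$ forces $\rho(u_n)\to 0$), guaranteeing that every rectangle of side-radii $\rho(u_n)^{a_i}$ meeting $B/2$ is contained in $B$. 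Writing $\Delta(\rcat,\rho(u_n)^{\bm a})=\bigcup_{z\in\rcat}R_z$ with $R_z:=\prod_{i=1}^{d}B(z_i,\rho(u_n)^{a_i})$, every point of $U_n\cap(B/2)$ lies in some $R_z$ with $z\in\rcat$, $\alpha\in J_n$, $R_z\subset B$. Let $\cf$ be the collection of all such rectangles; this gives (1) by construction.

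Crucially, every element of $\cf$ is a translate of a single shape. Introduce the rescaled metric
\[\dist^*(x,y):=\max_{1\le i\le d}\frac{\dist_i(x_i,y_i)}{\rho(u_n)^{a_i}}\]
on $X^\times$; under $\dist^*$, each $R_z$ is precisely the closed unit ball at $z$, and the dilate $tR_z$ is the $\dist^*$-ball of radius $t$. Applying Lemma~\ref{l:5r} in the metric $\dist^*$ to the family $\{3R_z:R_z\in\cf\}$ (common $\dist^*$-diameter $6$) yields a disjoint subfamily $\cg^*=\{3R_z:R_z\in\cg\}$ with
\[\bigcup_{R_z\in\cf}R_z\subset\bigcup_{R_z\in\cf}3R_z\subset\bigcup_{R_z\in\cg}15R_z.\]
Disjointness of $\cg^*$ translates back to the $3r$-disjointness of $\cg$ in the original metric $\dist^\times$, which is property~(2).

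For property~(3), product Ahlfors regularity gives $m^\times(tR_z)\asymp t^{\delta_1+\cdots+\delta_d}m^\times(R_z)$, so
\[c\,m^\times(B/2)<m^\times(U_n\cap B/2)\le m^\times\Bigl(\bigcup_{R_z\in\cg}15R_z\Bigr)\lesssim\sum_{R_z\in\cg}m^\times(R_z)=m^\times\Bigl(\bigcup_{R_z\in\cg}R_z\Bigr),\]
where the last equality uses (1-)disjointness of $\cg$ and $m^\times(B/2)\asymp m^\times(B)$ by Ahlfors regularity. This yields $m^\times(\bigcup_{\cg}R_z)\ge c''m^\times(B)$ for some $c''>0$ depending only on $c,d,\delta_1,\dots,\delta_d$. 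Since $\cg$ is a pairwise disjoint family inside $B$ with finite total $m^\times$-measure, it is countable and the measures are summable; truncating to a finite subfamily $K_{G,B}\subset\cg$ preserves at least half this mass, giving (3) with $c'=c''/2$. The main technical obstacle is applying the $5r$-covering lemma to anisotropic rectangles, and the rescaled metric $\dist^*$ resolves this cleanly by turning every relevant rectangle into a unit ball; after that the argument is bookkeeping, hinging on the fact that for each fixed $n$ all rectangles $R_z$ with $z\in\rcat$, $\alpha\in J_n$ share the exact same $d$-tuple of radii.
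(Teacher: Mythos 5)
The paper does not prove this lemma internally: it is cited verbatim as \cite[Lemma 8.1]{WW21} and used as a black box. Your argument is therefore a self-contained reconstruction rather than something that can be compared line-by-line against the paper, but it is correct. The essential move --- introducing the anisotropically rescaled max-metric $\dist^*$ so that every rectangle $R_z$ becomes a unit ball, then invoking the $5r$-covering lemma in that metric to produce the $3r$-disjoint subfamily and the $15$-fold cover --- is exactly the right way to adapt the ball version (Lemma \ref{l:kgb}) to rectangles of a fixed shape, and the measure-counting step using Ahlfors regularity of $m^\times$ together with disjointness of $\cg$ then gives (3). Two cosmetic points: the condition guaranteeing $R_z\subset B$ whenever $R_z$ meets $B/2$ should be phrased as $\max_i\rho(u_n)^{a_i}<|B|/10$ unless you explicitly assume $a_1=\min_i a_i$ and $\rho(u_n)<1$; and the truncation to a finite subfamily is unnecessary, since the elements of $\cg$ are pairwise disjoint rectangles of identical $m^\times$-measure bounded below, all lying in a bounded region, so $\cg$ is already finite --- though the truncation argument you give is equally valid.
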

Here the center and sidelengths of $R_\alpha$ are not explicitly written in the notation, since the former is not important to us, while the latter is implicit in the subscript $\alpha$.

The proof of Theorem \ref{t:meaRtoR} follows the idea of \cite{WW21} but with some modifications. It is easily seen that our proof is notably simpler than Wang and Wu's \cite[\S 6.1]{WW21}.


\subsection{Proof of Theorem \ref{t:meaRtoR}}\label{s:suit}
Fix a ball $B$ in $X^\times$. Apply the $K_{G,B}$-lemma (Lemma \ref{l:kgb2}) to $B$, for infinitely many $n$ there exists a collection of well separated rectangles with the form
\[R_{\alpha}=\prod_{i=1}^dB\big(z_i,\rho(u_{n})^{a_i}\big),\]
where $z=(z_1,\dots,z_d)\in\rcat$ and $\alpha\in J_{n}$. Moreover, the $m^\times$-measure of the union of these rectanlges satisfies
\[m^\times\bigg(\bigcup_{R_\alpha\in K_{G,B}}R_\alpha\bigg)\asymp m^\times(B).\]
We stress that all the rectangles in $K_{G,B}$ have the same sidelengths.

Fix arbitrary $n$ satisfying the above properties. Write $r_0=\rho(u_{n})$.
 Recalling \eqref{eq:En}, the desired subset of $E_n\cap B$ is defined as
 \[\bigcup_{R_{\alpha}\in K_{G,B}}R_{\alpha}\cap\Delta(\rcat,r_0^{\bm a+\bm t})\subset E_n\cap B.\]
For any $R_\alpha\in K_{G,B}$, write
\[L_\alpha=R_\alpha\cap\Delta(\rcat,r_0^{\bm a+\bm t}).\]
Define
\begin{equation}\label{eq:measuremu}
	\mu:=\sum_{R_\alpha\in K_{G,B}}\frac{m^\times(R_\alpha)}{\sum_{R_\alpha\in K_{G,B}}m^\times(R_\alpha)}\cdot \frac{m^\times|_{L_\alpha}}{m^\times(L_\alpha)}.
\end{equation}
Clearly, $\mu$ is a probability measure supported on $E_n\cap B$.
Now we estimate the $\mu$-measure of arbitrary ball. Let $B(x,r)=\prod_{i=1}^dB(x_i,r)$ be a ball in $X^\times$ with $0<r<|B|$. Suppose that $x$ belongs to some $\tilde L_{\tilde\alpha}\subset \tilde R_{\tilde \alpha}\in K_{G,B}$. The estimation is devided into three cases.

\noindent\textbf{Case 1:} $r<r_{0}^{a_d+t_d}$. In this case, by the separated property of rectangles in $K_{G,B}$ (Lemma \ref{l:kgb2} (2)), we see that $B(x,r)$ only intersects the rectangle $\tilde R_{\tilde\alpha}$ to which $x$ belongs. By $\kappa_i$-scaling property and Lemma \ref{l:kgb2} (3),
\begin{align}
	\frac{m^\times(\tilde R_{\tilde\alpha})}{\sum_{R_\alpha\in K_{G,B}}m^\times(R_\alpha)}\cdot \frac{1}{m^\times(\tilde L_{\tilde\alpha})}\asymp&\frac{\prod_{i=1}^dr_0^{\delta_ia_i}}{m^\times(B)}\cdot\frac{1}{\prod_{i=1}^dr_0^{\delta_ia_i\kappa_i}\cdot r_0^{\delta_i(a_i+t_i)(1-\kappa_i)}}\notag\\
	=&\frac{1}{m^\times(B)}\cdot\frac{1}{\prod_{i=1}^dr_0^{\delta_it_i(1-\kappa_i)}}.\label{eq:useful}
\end{align}
Then, by the definition of $\mu$ (see \eqref{eq:measuremu}),
\begin{align*}
	\mu\bigg(B(x,r)\cap\bigcup_{R_\alpha\in K_{G,B}}L_\alpha\bigg)&=\frac{m^\times(\tilde R_{\tilde\alpha})}{\sum_{R_\alpha\in K_{G,B}}m^\times(R_\alpha)}\cdot \frac{m^\times|_{L_\alpha}\big(B(x,r)\big)}{m^\times(\tilde L_{\tilde\alpha})}\\
	&\lesssim\frac{1}{m^\times(B)}\cdot\frac{1}{\prod_{i=1}^dr_0^{\delta_it_i(1-\kappa_i)}}\cdot \prod_{i=1}^dr^{\delta_i}\\
	&\le\frac{\prod_{i=1}^dr^{\delta_i-\delta_it_i(1-\kappa_i)/(a_d+t_d)}}{m^\times(B)},
\end{align*}
where the last inequality follows from $r<r_{0}^{a_d+t_d}$.
One can see that
\[\sum_{i=1}^{d}\delta_i-\sum_{i=1}^{d}\frac{\delta_it_i(1-\kappa_i)}{a_d+t_d}\]
is just the one in Theorem \ref{t:mRtoR} defined by choosing $\tau=a_d+t_d$, since
\[\ck_1(\tau)=\{i:a_i\ge a_d+t_d\}=\emptyset\qaq \ck_2(\tau)=\{i:a_i+t_i\le a_d+t_d\}=\{1,\dots,d\}.\]
Thus,
\begin{equation}\label{eq:case11}
	\mu\bigg(B(x,r)\cap\bigcup_{R_\alpha\in K_{G,B}}L_\alpha\bigg)\lesssim\frac{r^s}{m^\times(B)}.
\end{equation}

\noindent\textbf{Case 2:} $r>r_0^{a_1}$. In this case, the ball $B(x,r)$ is sufficently large so that for any rectangle $R_\alpha\in K_{G,B}$,
\[B(x,r)\cap R_\alpha\ne\emptyset\quad\Longrightarrow\quad R_\alpha\subset B(x,3r).\]
By the definition of $\mu$, one has
\begin{align}
	\mu\big(B(x,r)\big)&\le\sum_{R_\alpha\in K_{G,B}\atop R_{\alpha}\subset B(x,3r)}\frac{m^\times(R_\alpha)}{\sum_{R_\alpha\in K_{G,B}}m^\times(R_\alpha)}\lesssim\frac{m^\times\big(B(x,3r)\big)}{m^\times(B)}\notag\\
	&\asymp \frac{r^{\delta_1+\cdots+\delta_d}}{m^\times(B)}\le \frac{r^s}{m^\times(B)}.\label{eq:case22}
\end{align}

\noindent\textbf{Case 3:} Arrange the elements in $\ca$ in non-descending order. Suppose that $r_0^{\tau_{k+1}}\le r<r_0^{\tau_k}$ with $\tau_k$ and $\tau_{k+1}$ are two consecutive terms in $\ca$. By \eqref{eq:useful},
\begin{align}
	&\mu\bigg(B(x,r)\cap\bigcup_{R_\alpha\in K_{G,B}}L_\alpha\bigg)\notag\\
	&\le \sum_{R_\alpha\in K_{G,B}\atop R_{\alpha}\cap B(x,r)\ne\emptyset}\frac{m^\times(R_\alpha)}{\sum_{R_\alpha\in K_{G,B}}m^\times(R_\alpha)}\cdot \frac{m|_{L_\alpha}\big(B(x,r)\big)}{m^\times(L_\alpha)}\notag\\
	&\asymp \frac{1}{m^\times(B)}\cdot\frac{1}{\prod_{i=1}^dr_0^{\delta_it_i(1-\kappa_i)}}\sum_{R_\alpha\in K_{G,B}\atop R_{\alpha}\cap B(x,r)\ne\emptyset}m^\times\big(B(x,r)\cap L_\alpha\big).\label{eq:step1}
\end{align}
Next, we estimate the number of rectangles $R_\alpha$ in $K_{G,B}$ that intersects $B(x,r)$ and the $\mu$-measure of the corresponding intersection $B(x,r)\cap L_\alpha$. For this purpose, define the sets
\[\ck_1(\tau_{k+1})=\{i:a_i\ge \tau_{k+1}\},\ \ck_2(\tau_k)=\{i:a_i+t_i\le \tau_k\},\]
\[\ck_3=\{1,\dots,d\}\setminus\big(\ck_1(\tau_{k+1})\cup\ck_2(\tau_k)\big).\]
It should be noticed the above sets we are going to use are slightly different to those given in Theorem \ref{t:mRtoR}.

Note that $m^\times$ is a product measure and the sets
\[B(x,r)\cap L_\alpha=\prod_{i=1}^{d}B(x_i,r)\cap B(z_i,r_0^{a_i}) \cap\Delta(\rci,r_0^{a_i+t_i})\]
 are also product sets. Hence, the estimation builds upon the following observations:

 \noindent{\textbf{Observation A}}: in the directions $i\in\ck_1(\tau_{k+1})$,
 \[r\ge r_0^{\tau_{k+1}}\ge r_0^{a_i}.\]
 So the total number $N_i$ of rectangles that intersecting $B(x,r)$ along the $i$th direction is majorized by
 \begin{equation}\label{eq:Ni}
 	N_i\lesssim \bigg(\frac{r}{r_0^{a_i}}\bigg)^{\delta_i}.
 \end{equation}
 Moreover, for any $R_\alpha$ that intersects $B(x,r)$, by $\kappa_i$-scaling property
 \begin{align}
 	&\prod_{i\in\ck_1(\tau_{k+1})}m_i\big(B(x_i,r)\cap B(z_i,r_0^{a_i})\cap\Delta(\rci,r_0^{a_i+t_i})\big)\notag\\
 	\le&\prod_{i\in\ck_1(\tau_{k+1})}m_i\big(B(z_i,r_0^{a_i})\cap\Delta(\rci,r_0^{a_i+t_i})\big)\notag\\
 	\asymp&\prod_{i\in\ck_1(\tau_{k+1})}r_0^{\delta_ia_i\kappa_i}\cdot r_0^{\delta_i(a_i+t_i)(1-\kappa_i)}=\prod_{i\in\ck_1(\tau_{k+1})}r_0^{\delta_ia_i+\delta_it_i(1-\kappa_i)}.\label{eq:mea1}
 \end{align}

 \noindent{\textbf{Observation B}}: in the directions $i\in\ck_2(\tau_k)$,
 \[r< r_0^{\tau_k}\le r_0^{a_i+t_i}.\]
 By the speration property of $K_{G,B}$, the ball $B(x,r)$ only intersect one rectangles in the $i$th direction. Clearly,
 \begin{equation}\label{eq:mea2}
 	\prod_{i\in\ck_2(\tau_k)}m_i\big(B(x_i,r)\cap B(z_i,r_0^{a_i})\cap\Delta(\rci,r_0^{a_i+t_i})\big)\lesssim\prod_{i\in\ck_2(\tau_k)}r^{\delta_i}.
 \end{equation}

 \noindent{\textbf{Observation C}}: in the directions $i\in\ck_3$,
 \[r_0^{a_i+t_i}\le r\le r_0^{a_i}.\]
 By the speration property of $K_{G,B}$, the ball $B(x,r)$ only intersects one rectangle in the $i$th direction. By the $\kappa_i$-scaling property,
 \begin{equation}\label{eq:mea3}
 	\prod_{i\in\ck_3}m_i\big(B(x_i,r)\cap B(z_i,r_0^{a_i})\cap\Delta(\rci,r_0^{a_i+t_i})\big)\lesssim\prod_{i\in\ck_3}r^{\delta_i\kappa_i}\cdot r_0^{\delta_i(a_i+t_i)(1-\kappa_i)}.
 \end{equation}
Thus, by \eqref{eq:Ni}--\eqref{eq:mea3}
\[\begin{split}
	&\sum_{R_\alpha\in K_{G,B}\atop R_{\alpha}\cap B(x,r)\ne\emptyset}m^\times\big(B(x,r)\cap R_\alpha\cap\Delta(\rca,r_0^{\bm a+\bm t})\big)\\
	\lesssim &\prod_{i\in\ck_1(\tau_{k+1})}N_ir_0^{\delta_ia_i+\delta_it_i(1-\kappa_i)}\cdot \prod_{i\in\ck_2(\tau_k)}r^{\delta_i}\cdot\prod_{i\in\ck_3}r^{\delta_i\kappa_i}\cdot r_0^{\delta_i(a_i+t_i)(1-\kappa_i)}\\
	\lesssim& \prod_{i\in\ck_1(\tau_{k+1})}r^{\delta_i}\cdot r_0^{\delta_it_i(1-\kappa_i)}\cdot \prod_{i\in\ck_2(\tau_k)}r^{\delta_i}\cdot\prod_{i\in\ck_3}r^{\delta_i\kappa_i}\cdot r_0^{\delta_i(a_i+t_i)(1-\kappa_i)}.
\end{split}\]
This together with \eqref{eq:step1} gives
\[\begin{split}
	&\mu\bigg(B(x,r)\cap\bigcup_{R_\alpha\in K_{G,B}}L_\alpha\bigg)\\
	\lesssim&\frac{1}{m^\times(B)}\prod_{i\in\ck_1(\tau_{k+1})}r^{\delta_i}\cdot \prod_{i\in\ck_2(\tau_k)}r^{\delta_i}\cdot r_0^{-\delta_it_i(1-\kappa_i)}\cdot\prod_{i\in\ck_3}r^{\delta_i\kappa_i}\cdot r_0^{\delta_ia_i(1-\kappa_i)}:=\frac{r^t}{m^\times(B)},
\end{split}\]
where
\begin{align}
	t=\sum_{i\in\ck_1(\tau_{k+1})}\delta_i+&\sum_{i\in\ck_2(\tau_k)}\delta_i+\sum_{i\in\ck_3}\delta_i\kappa_i\notag\\
	+&\frac{\big(\sum_{i\in\ck_3}\delta_ia_i(1-\kappa_i)-\sum_{i\in\ck_2(\tau_k)}\delta_it_i(1-\kappa_i)\big)\log r_0}{\log r}.\label{eq:funr}
\end{align}
Finally, we will show that $t\ge s$, which together with \eqref{eq:case11} and \eqref{eq:case22} completes the proof. The proof of $t\ge s$ is the same as \cite[(4.5)]{WW21}, we include it for reader's convenience.

\begin{lem}
	Let $t$ and $s$ be as above. We have $t\ge s$.
\end{lem}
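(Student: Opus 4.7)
The plan is to view $t$ as a monotone function of the auxiliary variable $\theta := \log r/\log r_0$, and reduce the inequality $t \ge s$ to a finite check at the endpoints.

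First, substitute $\theta$ into the expression for $t$: since $r_0 \in (0,1)$ and $r_0^{\tau_{k+1}} \le r < r_0^{\tau_k}$, we have $\theta \in (\tau_k,\tau_{k+1}]$, and
\[
t \;=\; A \;+\; \frac{B}{\theta}, \qquad A := \sum_{i\in\ck_1(\tau_{k+1})}\delta_i+\sum_{i\in\ck_2(\tau_k)}\delta_i+\sum_{i\in\ck_3}\delta_i\kappa_i,
\]
with $B$ the numerator of the last term in \eqref{eq:funr}. This function is monotone in $\theta$ (increasing if $B<0$, decreasing if $B\ge 0$, constant if $B=0$), so its infimum on $(\tau_k,\tau_{k+1}]$ is attained at one of the two endpoints; hence $t\ge\min\bigl(t|_{\theta=\tau_k},\,t|_{\theta=\tau_{k+1}}\bigr)$.

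Next I would show that the endpoint values of $t$ coincide with the candidate values $s(\tau_k)$ and $s(\tau_{k+1})$ appearing in the minimum defining $s(\bm t)$. Since $\tau_k$ and $\tau_{k+1}$ are consecutive in $\ca = \{a_i, a_i+t_i: 1\le i\le d\}$, the discrepancy between the index sets $\{\ck_1(\tau_{k+1}),\ck_2(\tau_k),\ck_3\}$ used in \eqref{eq:funr} and the sets $\{\ck_1(\tau),\ck_2(\tau),\ck_3(\tau)\}$ used in the definition of $s(\tau)$ is concentrated on two small sets: $D := \{i: a_i+t_i = \tau_{k+1}\}\setminus\ck_1(\tau_{k+1})$ (indices that are in $\ck_3$ but move into $\ck_2(\tau_{k+1})$) and $E := \{i : a_i = \tau_k\}$ (indices that are in $\ck_3$ but move into $\ck_1(\tau_k)$). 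Plugging $\theta = \tau_{k+1}$ and computing the difference $s(\tau_{k+1}) - t|_{\theta=\tau_{k+1}}$ yields
\[
\sum_{i\in D}\delta_i(1-\kappa_i)\left(1 - \frac{a_i+t_i}{\tau_{k+1}}\right) = 0,
\]
since $a_i+t_i = \tau_{k+1}$ on $D$. The analogous calculation at $\theta = \tau_k$ gives $\sum_{i\in E}\delta_i(1-\kappa_i)(1 - a_i/\tau_k) = 0$.

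Combining these identities, $t|_{\theta=\tau_{k+1}} = s(\tau_{k+1})$ and $t|_{\theta=\tau_k} = s(\tau_k)$, so by the monotonicity step
\[
t \;\ge\; \min\bigl(s(\tau_k),\,s(\tau_{k+1})\bigr) \;\ge\; \min_{\tau\in\ca} s(\tau) \;=\; s(\bm t) = s,
\]
as required. The only genuine difficulty is the bookkeeping in the second paragraph: keeping track of which indices move between $\ck_1, \ck_2, \ck_3$ as $\tau$ crosses an element of $\ca$, and arranging the telescoping so the boundary terms cancel exactly because of the defining equalities $a_i = \tau_k$ or $a_i + t_i = \tau_{k+1}$. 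Everything else is arithmetic.
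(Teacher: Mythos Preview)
Your proposal is correct and follows essentially the same approach as the paper: both argue that $t$, viewed as a function of $\log r/\log r_0$, is monotone on $(\tau_k,\tau_{k+1}]$, then verify at each endpoint that the value coincides with the corresponding term $s(\tau_k)$ or $s(\tau_{k+1})$ in the definition of $s(\bm t)$ by tracking how the index sets $\ck_1,\ck_2,\ck_3$ shift across consecutive elements of $\ca$. Your telescoping computation of the vanishing differences is exactly the content of the paper's identity \eqref{eq:equal} and its companion in Case~2.
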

\begin{proof}
	It is easily seen that the term in \eqref{eq:funr} regarded as a function of $r$ is monotonic on the interval $[r_0^{\tau_{k+1}},r_0^{\tau_k}]$. So the minimal value attains when $r=r_0^{\tau_{k+1}}$ or $r_0^{\tau_k}$. Naturally, we consider two cases.

	\noindent\textbf{Case 1:} minimum attains when $r=r_0^{\tau_k}$. Then,
	\[\begin{split}
		t\ge \sum_{i\in\ck_1(\tau_{k+1})}\delta_i+\sum_{i\in\ck_2(\tau_k)}\delta_i+\sum_{i\in\ck_3}\delta_i\kappa_i+\frac{\sum_{i\in\ck_3}\delta_ia_i(1-\kappa_i)-\sum_{i\in\ck_2(\tau_k)}\delta_it_i(1-\kappa_i)}{\tau_k}.
	\end{split}\]
	Since
	\[\ck_1(\tau_k)=\ck_1(\tau_{k+1})\cup\{i:a_i=\tau_k\}\qaq \ck_3(\tau_k)=\ck_3\setminus\{i:a_i=\tau_k\},\]
	we have
\begin{align}
	&\sum_{i\in\ck_1(\tau_{k+1})}\delta_i+\sum_{i\in\ck_3}\bigg(\delta_i\kappa_i+\frac{\delta_ia_i(1-\kappa_i)}{\tau_k}\bigg)\notag\\
	=&\sum_{i\in\ck_1(\tau_{k+1})}\delta_i+\sum_{i\in\ck_3(\tau_k)}\bigg(\delta_i\kappa_i+\frac{\delta_ia_i(1-\kappa_i)}{\tau_k}\bigg)+\sum_{i:\, a_i=\tau_k}\delta_i\notag\\
	=&\sum_{i\in\ck_1(\tau_k)}\delta_i+\sum_{i\in\ck_3(\tau_k)}\bigg(\delta_i\kappa_i+\frac{\delta_ia_i(1-\kappa_i)}{\tau_k}\bigg).\label{eq:equal}
\end{align}
	By the definition of $s$,
	\[t\ge s.\]

	\noindent\textbf{Case 2:} minimum attains when $r=r_0^{\tau_{k+1}}$. Then,
	\[\begin{split}
		t\ge \sum_{i\in\ck_1(\tau_{k+1})}\delta_i+\sum_{i\in\ck_2(\tau_k)}\delta_i+\sum_{i\in\ck_3}\delta_i\kappa_i+\frac{\sum_{i\in\ck_3}\delta_ia_i(1-\kappa_i)-\sum_{i\in\ck_2(\tau_k)}\delta_it_i(1-\kappa_i)}{\tau_{k+1}}.
	\end{split}\]
	Since
	\[\ck_2(\tau_{k+1})=\ck_2(\tau_k)\cup\{i:a_i+t_i=\tau_{k+1}\}\qaq \ck_3(\tau_{k+1})=\ck_3\setminus\{i:a_i+t_i=\tau_{k+1}\},\]
	by the same reasoning as \eqref{eq:equal}, we still have
	\[t\ge s.\qedhere\]
\end{proof}
\section{Application: Simpler proof of MTP from rectangles to rectangles---general case}
The proof of Theorem \ref{t:Hausmeartor} is almost identical to the proof of Theorem \ref{t:meaRtoR} after a minor modification. Recall in Section \ref{s:rtor} that for any $B$ in $X^\times$, we apply the $K_{G,B}$-lemma (Lemma \ref{l:kgb2}) to $B$ and obtain a collection of rectangles that contained in some $\Delta\big(\rcat,\rho(u_n)^{\bm a}\big)$ with $\alpha\in J_n$. Although it is not explicitly stated in Lemma \ref{l:kgb2}, the statements indeed hold for any sufficient large $n$ with
\[ m^\times\bigg(B\cap\bigcup_{\alpha\in J_n}\Delta \big(\rcat, \rho(u_n)^{\bm a}\big)\bigg)>cm^\times(B).\]
Later, we distribute a measure $\mu$ supported on $B\cap\bigcup_{\alpha\in J_n}\Delta (\rcat, \rho(u_n)^{\bm a+\bm t})$ and then show that
\begin{equation}\label{eq:H>>}
	\mu\big(B(x,r)\big)\lesssim r^{s(\bm t)}/m^\times(B)
\end{equation}
holds for all $x$ and $r>0$, which gives the desired Hausdorff content bound.

The proof of Theorem \ref{t:Hausmeartor} follows this strategy closely. We sketch the proof and indicate the required modifications. But the details will not be presented here as they are similar. Let $t>0$ be such that $t\le s(\bm t_n)$ for infinitely many $n$. Fix a ball $B$ in $X$. By the uniform local ubiquity,
 \[m^\times\bigg(B\cap\bigcup_{\alpha\in J_n}\Delta \big(\rcat, \rho(u_n)^{\bm a}\big)\bigg)>cm^\times(B)\quad\text{for all $n\ge n_0(B)$}.\]
Choose a large $n$ so that the above holds and $t\le s(\bm t_n)$. For any such $n$, follow the same line as \eqref{eq:H>>}, we can distribute a measure $\mu$ supported on the `shrunk' set $B\cap\bigcup_{\alpha\in J_n}\Delta (\rcat, \rho(u_n)^{\bm a+\bm t_n})$ satisfying
 \[\mu\big(B(x,r)\big)\lesssim r^{s(\bm t_n)}/m^\times(B)\le r^t/m^\times(B).\]
 In turn, this gives
  \[\hc^t\bigg(B\cap\bigcup_{\alpha\in J_n}\Delta \big(\rcat, \rho(u_n)^{\bm a+\bm t_n}\big)\bigg)\apprge m^\times(B)\quad\text{for i.m. $n$}.\]
  Clearly, the above argument holds for all $t<\limsup_{n\to\infty}s(\bm t_n)$, by Corollary \ref{c:weaken} (2) we have
  \[W\big(\{\bm t_n\}\big)\in \scg_1^s(X).\]
  If $\limsup s(\bm t_n)$ is attained along a nonincreasing subsequence, then the above argument holds for $t=\limsup s(\bm t_n)$. Therefore, we can conclude that
  \[W\big(\{\bm t_n\}\big)\in \scg^s(X).\]
\section{Proof: MTP from rectangles to small open sets}\label{s:rtoo}
In this section, we still work on the product space $(X^\times,\dist^\times,m^\times)$. Recall that in the setting of Theorem \ref{t:MTPRtO},
\begin{equation}\label{con:full measure}
	m^\times\Big(\limsup_{n\to \infty}R_n\Big)=m^\times(X^\times),
\end{equation}
where  $R_n=\prod_{i=1}^{d}B(z_{n,i},r_n^{a_i})$ and $a_1\le \cdots\le a_d$.

When the local ubiquiy condition is replaced by full measure property, one still has the following result.
\begin{lem}[$K_{G,B}$-lemma, {\cite[Lemma 5.2]{WW21}}]\label{l:kgb1}
	Assume the full measure property \eqref{con:full measure} for rectangles. Let $B$ be a ball in $X^\times$. For any $G\in \N$, there exists a finite collection $K_{G,B}$ of the rectangles
	\[\biggl\{R_n=\prod_{i=1}^dB(z_{n,i},r_n^{a_i}):n\ge G,R_n\subset B\biggr\}\]
	such that
	\begin{enumerate}[(1)]
		\item all the rectangles in $K_{G,B}$ are contained in $B$;
		\item  the rectangles are $3r$-disjoint in the sense that for any different elements in $K_{G,B}$
		\[3\prod_{i=1}^dB(z_{k,i},r_k^{a_i})\cap3 \prod_{i=1}^dB(z_{n,i},r_n^{a_i})=\emptyset;\]
		\item these rectangles almost pack the ball $B$ in the sense that,
		\[m^\times\bigg(\bigcup_{R_n\in K_{G,B}}R_n\bigg)>c' m^\times(B),\]
		where $c'$ does not depend on $G$ and $B$.
	\end{enumerate}
\end{lem}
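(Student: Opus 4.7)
The plan is to deduce the lemma from the full-measure hypothesis by a Vitali-type selection tailored to the product structure of the $R_n$. The key geometric fact is that all the rectangles share the same exponents $(a_1,\ldots,a_d)$, so whenever $R_n\cap R_m\neq\emptyset$ and $r_n\le r_m$ one has $R_n\subset 3R_m$: the side-lengths satisfy $r_n^{a_i}\le r_m^{a_i}$ in every direction, and the centres differ by at most $r_n^{a_i}+r_m^{a_i}\le 2r_m^{a_i}$. This is the anisotropic analogue of the monotonicity that drives the classical $5r$-covering lemma.

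First I would localise inside $B$. Fix a small $\epsilon>0$, choose a concentric subball $B'\subset B$ with $m^\times(B')\ge (1-\epsilon)m^\times(B)$, and use $r_n\to 0$ to pick $G_1\ge G$ so large that every $R_n$ with $n\ge G_1$ meeting $B'$ is already contained in $B$. The hypothesis $m^\times(\limsup R_n)=m^\times(X^\times)$ together with the fact that $\bigcup_{n\ge N}R_n$ decreases to $\limsup R_n$ yields $m^\times\big(\bigcup_{n\ge G_1}R_n\cap B'\big)=m^\times(B')$, so that
\[\cf:=\{R_n:n\ge G_1,\ R_n\subset B,\ R_n\cap B'\ne\emptyset\}\]
satisfies $m^\times\big(\bigcup_{R\in\cf}R\big)\ge m^\times(B')\gtrsim m^\times(B)$.

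Next I would run a Vitali-type selection on $\cf$. I would group the rectangles by dyadic ranges of $r_n$ and process the ranges from largest to smallest; greedily, I add $R_n$ to the selected family $\cf'$ precisely when $3R_n$ is disjoint from $3R_m$ for every previously selected $R_m$. By the geometric fact above (applied up to the dyadic slack in $r_n$), any discarded $R_k$ is contained in a fixed enlargement $\lambda R_m$ of some selected $R_m$, with $\lambda$ an absolute constant; hence $\bigcup_{R\in\cf'}\lambda R\supset\bigcup_{R\in\cf}R$. Properties (1) and (2) of the lemma hold automatically for $\cf'$. Since each $m_i$ is $\delta_i$-Ahlfors regular, $m^\times(\lambda R)\asymp m^\times(R)$ uniformly, and the disjointness of $\cf'$ then gives
\[m^\times\bigg(\bigcup_{R\in\cf'}R\bigg)=\sum_{R\in\cf'}m^\times(R)\gtrsim \sum_{R\in\cf'}m^\times(\lambda R)\ge m^\times\bigg(\bigcup_{R\in\cf}R\bigg)\gtrsim m^\times(B).\]
Finally, since the rectangles in $\cf'$ are disjoint inside $B$ and each has positive $m^\times$-measure, any fixed proportion of the total mass is realised by some finite initial segment; truncating at such a stage produces the finite $K_{G,B}$ satisfying (3) with $c'$ depending only on $(a_i)$, $(\delta_i)$ and the choice of $\epsilon$.

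The main obstacle is making the Vitali step go through: the $5r$-covering lemma is tailored to balls, whereas the $R_n$ are genuinely anisotropic. The argument survives only because the anisotropy pattern is the same for every rectangle (only the scalar $r_n$ varies), which is exactly what yields the containment $R_n\subset 3R_m$ when $R_n\cap R_m\neq\emptyset$ and $r_n\le r_m$. Without this uniformity across the family the enlargement constant could not be made absolute, and the selection scheme would collapse.
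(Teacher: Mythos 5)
Your argument is correct, and the key geometric observation — that all rectangles $R_n$ share the same exponent vector $(a_1,\ldots,a_d)$, so the implication ``$R_n\cap R_m\neq\emptyset$ and $r_n\le r_m$ $\Rightarrow$ $R_n\subset 3R_m$'' holds exactly as for balls — is precisely what makes the anisotropic Vitali selection go through. The paper itself does not reprove this lemma; it cites \cite[Lemma 5.2]{WW21}, and the proof there is of exactly this type (a covering/selection argument built on the fact that the rectangle family is a one-parameter scaling of a fixed anisotropic shape), so your reconstruction matches the intended route. Two small points worth tightening if you write this out in full: (i) when you pass to dyadic scales you should note that the enlargement constant $\lambda$ picks up a factor like $2^{\max_i a_i}$, so it is absolute only because the $a_i$ are fixed in advance; and (ii) the step ``any fixed proportion of the total mass is realised by some finite initial segment'' uses that $\sum_{R\in\cf'}m^\times(R)\le m^\times(B)<\infty$ (the selected rectangles being disjoint and contained in $B$), which is what lets you truncate to a finite $K_{G,B}$ without losing the uniformity of $c'$.
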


Now we are ready to prove Theorem \ref{t:MTPRtO}.

\begin{proof}[Proof of Theorem \ref{t:MTPRtO}]
	For any $k\ge 1$, write
	\[F_n=\bigcup_{k=n}^\infty E_k,\]
	where recall that $E_k\subset R_k$ is an open sets satisfying $|E_k|\le r_k^{a_d}$.
	The goal is to show that for any $t<s$,
	\[\limsup_{n\to\infty}\hc^t(F_n\cap B)\apprge m^\times(B) \quad\text{for all $B$},\]
	where the implied constant does not depend on $B$. Then by Corollary \ref{c:weaken} one would have
	\[\limsup_{k\to\infty}E_k=\bigcap_{n=1}^\infty F_n\in\scg_1^s(X).\]

	Let $t<s$ and $\epsilon=s-t$. Fix a ball $B$ in $X$.
	Apply the $K_{G,B}$-lemma (Lemma \ref{l:kgb1}) to $F_n\cap B$, we obtain a collection $K_{G,B}$  of well-separated rectangles. This is possible since by definition $F_n$ is of full $m^\times$-measure. For each $R_k\in K_{G,B}$ and $E_k\subset R_k$, by the condition of Theorem \ref{t:MTPRtO}, we have
	\[|E_k|\le r_k^{a_d}\qaq\hc^s(E_k)>cm^\times(R_k).\]
	By Proposition \ref{p:frost}, there is a measure $\nu_k$ supported on $E_k$ such that for any $x\in E_k$ and $r>0$
	\begin{equation}\label{eq:nun}
		\nu_k\big(B(x,r)\big)\le \frac{D^8r^s}{\hc^s(E_k)}\le \frac{D^8r^s}{cm^\times(R_k)}.
	\end{equation}

	We group the rectangles in $K_{G,B}$ accoding to the range of $r_k$. For $\ell\in\N$, let
	\[\cg_\ell=\{R_k\in K_{G,B}:2^{-\ell-1}\le r_k<2^{-\ell}\}.\]
	Let $\ell_0$ be the smallest integer for which $\cg_{\ell_0}$ is non-empty.
	For any $n\ge G$, define a probability measure $\mu$ on $F_n\cap B$ by
	\begin{align}
		\mu&=\frac{1}{\sum_{R_k\in K_{G,B}}m^\times(R_k)}\cdot \sum_{R_k\in K_{G,B}}m^\times(R_k)\cdot\nu_k\label{eq:exp1}\\
		&=\frac{1}{\sum_{R_k\in K_{G,B}}m^\times(R_k)}\cdot \sum_{\ell\ge \ell_0}\sum_{R_k\in \cg_\ell}m^\times(R_k)\cdot\nu_k,\label{eq:exp2}
	\end{align}
	where $\nu_k$ is given in \eqref{eq:nun}.

	Now, we estimate the $\mu$-measure of arbitrary balls. Let $B(x,r)=\prod_{i=1}^{d}B(x_i,r)$ be a ball with $0<r<|B|$. Suppose that $x$ belongs to some $\tilde E_l\subset \tilde R_l\in K_{G,B}$. Let $\ell_1$ be the unique integer for which
	\[2^{-(\ell_1+1)a_1}\le r<2^{-\ell_1 a_1}.\]
	Then,
	\begin{align}
		\mu\big(B(x,r)\big)&=\mu\bigg(B(x,r)\cap \bigcup_{\ell\ge \ell_0}\bigcup_{R_k\in \cg_\ell}E_k\bigg)\notag\\
		&\le\mu\bigg(B(x,r)\cap \bigcup_{\ell>\ell_1}\bigcup_{R_k\in \cg_\ell}E_k\bigg)+\sum_{\ell=\ell_0}^{\ell_1}\mu\bigg(B(x,r)\cap \bigcup_{R_k\in \cg_\ell}E_k\bigg)\notag\\
		&:=T_{\ell_1}+\sum_{\ell=\ell_0}^{\ell_1}S_\ell.\label{eq:twosum}
	\end{align}
	It is possible that $\ell_1<\ell_0$. If this happens, then we regard the summation in \eqref{eq:twosum} as $0$.

	For the quantity $T_{\ell_1}$, note that the ball $B(x,r)$ is so large that, by Lemma \ref{l:kgb1} (2), any rectangle $R_k\in \cg_\ell$ with $\ell> \ell_1$ intersecting $B(x,r)$ is contained in $B(x,3r)$. Thus, by \eqref{eq:exp2} and Lemma \ref{l:kgb1} (3),
	\begin{align}
		T_{\ell_1}&=\frac{1}{\sum_{R_k\in K_{G,B}}m^\times(R_k)}\cdot \sum_{\ell>\ell_1}\sum_{R_k\in \cg_\ell}m^\times(R_k)\cdot\nu_k\big(B(x,r)\big)\notag\\
		&\le\frac{1}{\sum_{R_k\in K_{G,B}}m^\times(R_k)}\cdot \sum_{\ell>\ell_1}\sum_{R_k\in \cg_\ell\atop R_k\subset B(x,3r)}m^\times(R_k)\notag\\
		&\le \frac{1}{\sum_{R_k\in K_{G,B}}m^\times(R_k)}\cdot m^\times\big(B(x,3r)\big)\notag\\
		&\asymp \frac{r^{\delta_1+\cdots+\delta_d}}{m^\times(B)}\le \frac{r^t}{m^\times(B)}.\label{eq:upper Tl}
	\end{align}

	For the quantity $S_\ell$ with $\ell\le \ell_1$, assume without loss of generality that $\ell_1\ge \ell_0$. For otherwise one would have $\ell<\ell_0$ and so by the definition of $\ell_0$, $S_\ell=0$. We consider two cases according to the range of $r$.

	\noindent{\textbf{Case 1:}} $r\le 2^{-\ell a_d}$. By the separation property of rectangles in $K_{G,B}$ (Lemma \ref{l:kgb1} (2)), the distance between two distinct rectangles in $\cg_\ell$ is at least $2^{-\ell a_d}$. Hence, we have
	\[B(x,r)\cap \displaystyle\bigcup_{R_k\in \cg_\ell}E_k\subset B(x,r)\cap\tilde E_l.\]
	It follows from \eqref{eq:nun} that
	\begin{align}
		\mu\big(B(x,r)\big)&=\frac{1}{\sum_{R_k\in K_{G,B}}m^\times(R_k)}\cdot m^\times(\tilde R_l)\cdot\nu_l\big(B(x,r)\big)\notag\\
		&\lesssim \frac{1}{\sum_{R_k\in K_{G,B}}m^\times(R_k)}\cdot m^\times(\tilde R_l)\cdot \frac{r^s}{m^\times(\tilde R_l)}\notag\\
		&\lesssim \frac{r^s}{m^\times(B)}=\frac{r^t\cdot r^{\epsilon}}{m^\times(B)}\le \frac{2^{-\ell a_d\epsilon}\cdot r^t}{m^\times(B)}.\label{eq:upp sl1}
	\end{align}

	\noindent{\textbf{Case 2:}} $2^{-\ell a_{j+1}}<r< 2^{-\ell a_j}$ with $1\le j\le d-1$. In this case, we have
	\begin{align}
		S_\ell&=\frac{1}{\sum_{R_k\in K_{G,B}}m^\times(R_k)}\cdot \sum_{R_k\in \cg_\ell}m^\times(R_k)\cdot\nu_k\big(B(x,r)\big)\notag\\
		&\le\frac{1}{\sum_{R_k\in K_{G,B}}m^\times(R_k)}\cdot \sum_{R_k\in \cg_\ell\atop R_k\cap B(x,r)\ne\emptyset}m^\times(R_k)\notag\\
		&\lesssim\frac{1}{m^\times(B)}\cdot \sum_{R_k\in \cg_\ell\atop R_k\cap B(x,r)\ne\emptyset}m^\times(R_k).\label{eq:upp sl2}
	\end{align}
	Now, we estimate the number of rectangles in $\cg_\ell$ that intersecting $B(x,r)$. By the separation property of rectangles in  $K_{G,B}$ (Lemma \ref{l:kgb1} (2)), in the $i$th direction with $i\le j$ the ball $B(x_i,r)$ only intersects one rectangle in $\cg_{\ell}$. While in the $i$th direction with $i\ge j+1$, by using the volume argument the number $N_i$ of rectangles that intersecting $B(x_i,r)$ are at most
	\begin{equation}\label{eq:card nj}
		N_i\lesssim \bigg(\frac{r}{2^{-\ell a_i}}\bigg)^{\delta_i}=r^{\delta_i}\cdot2^{\ell \delta_ia_i}.
	\end{equation}
	If $s\le\delta_{j+1}+\cdots+\delta_{d}$, since each rectangle in $\cg_\ell$ is of $m^\times$-measure comparable to $\prod_{i=1}^d2^{-\ell \delta_ia_i}$,  by \eqref{eq:card nj} we obtain
	\begin{align}
		\sum_{R_k\in \cg_\ell\atop R_k\cap B(x,r)\ne\emptyset}m^\times(R_k)&\lesssim \bigg(\prod_{i=j+1}^{d}r^{\delta_i}\cdot2^{\ell \delta_ia_i}\bigg)\cdot \bigg(\prod_{i=1}^d2^{-\ell \delta_ia_i}\bigg)\notag\\
		&\le r^{\delta_{j+1}+\cdots+\delta_d}\le r^s\label{eq:upp1}.
	\end{align}
	If $s>\delta_{j+1}+\cdots+\delta_{d}$, since $m^\times(R_k)\lesssim\hc^s(E_k)$ and $|E_k|\le r_n^{a_d}\asymp2^{-\ell a_d}$, we obtain
	\[m^\times(R_k)\lesssim \hc^s(E_k)\lesssim 2^{-\ell s a_d},\]
	and so by \eqref{eq:card nj}
	\[\sum_{R_k\in \cg_\ell\atop R_k\cap B(x,r)\ne\emptyset}m^\times(R_k)\lesssim \bigg(\prod_{i=j+1}^{d}r^{\delta_i}\cdot2^{\ell \delta_ia_i}\bigg)\cdot 2^{-\ell sa_d}.\]
	Since $a_1\le\cdots\le a_d$, it holds that
	\[\begin{split}
		\ell(\delta_{j+1}a_{j+1}+\cdots+\delta_da_d)-\ell a_ds&\le \ell a_d(\delta_{j+1}+\cdots+\delta_d)-\ell a_ds\\
		&=-\ell a_d(s-\delta_{j+1}-\cdots-\delta_d).
	\end{split}\]
	Using $2^{-\ell a_d}\le2^{-\ell a_{j+1}}\le r$, we have
	\begin{align}
		\bigg(\prod_{i=j+1}^{d}r^{\delta_i}\cdot2^{\ell \delta_ia_i}\bigg)\cdot 2^{-\ell sa_d}&\le r^{\delta_{j+1}+\cdots+\delta_d}\cdot 2^{-\ell a_d(s-\delta_{j+1}-\cdots-\delta_d)}\notag\\
		&\le r^{\delta_{j+1}+\cdots+\delta_d}\cdot r^{s-\delta_{j+1}-\cdots-\delta_d}=r^s.\label{eq:upp2}
	\end{align}
	Finally, \eqref{eq:upp sl2} together with \eqref{eq:upp1} and \eqref{eq:upp2} yields
	\begin{equation}\label{eq:upper Sl}
		S_\ell \lesssim\frac{1}{m^\times(B)}\cdot \sum_{R_k\in \cg_\ell\atop R_k\cap B(x,r)\ne\emptyset}m^\times(R_k)\lesssim\frac{r^s}{m^\times(B)}\le \frac{2^{-\ell a_i\epsilon}\cdot r^t}{m^\times(B)}.
	\end{equation}

	With the upper bounds for $T_{\ell_1}$ and $S_\ell$ presented respectively in \eqref{eq:upper Tl} and \eqref{eq:upper Sl}, by \eqref{eq:twosum} we have
	\[\begin{split}
		\mu\big(B(x,r)\big)\lesssim \frac{r^t}{m^\times(B)}+\sum_{\ell=\ell_0}^{\ell_1}\frac{2^{-\ell a_1\epsilon}\cdot r^t}{m^\times(B)}\lesssim \frac{r^t}{m^\times(B)}.
	\end{split}\]
	This is true for all $x\in F_n\cap B$ and $r>0$, from the mass distribution principle it follows that for any ball $B$ in $X$,
	\[\hc^t(F_n\cap B)\apprge m^\times(B)\quad\text{whenever $n\ge G$},\]
	where the unspecified constant is independent of $B$. Therefore, the proof is finished.
\end{proof}

\section{Application: simpler proof of MTP for dynamical Diophantine approximation}\label{s:dmmtp}
This section is devoted to proving the large intersection property of $\cw(T,\psi)$ in the sense of $\scg_1^s(X)$. The strategy is to apply Corollary \ref{c:weaken} to the sets defining $\cw(T,\psi)$.
%
%
%
%

Recall that $\psi\colon X\to\R^+$ is a strictly positive continuous function, and the set we are interested in are
\[\cw(T,\psi):=\big\{x\in X:\dist(x,z)<e^{-S_n(\phi+\psi)(z)}\text{ for some $z\in T^{-n}y_0$, i.m.\,$n\in\N$}\big\}.\]
Also, there is a  $\delta$-Ahlfors regular measure $m$ supported on $X$.
\subsection{Auxiliary lemmas}

We begin with some necessary parameters.

Denote by $\|\cdot\|_{\max}$ and $\|\cdot\|_{\min}$, respectively, the maximal and minimal absolute value of a continuous function over $X$. Let
\begin{align}
	\eta_1&=\frac{\log\lambda}{\|\phi\|_{\min}+\log\lambda}, &&\eta_2=\frac{2\eta_1\|\phi+\psi\|_{\max}}{\|\phi+\psi\|_{\min}},\notag\\
	\eta_3&=\frac{\log\lambda}{\|\phi\|_{\min}-\log\lambda},&&\eta_4=\eta_1(1+3\|\psi\|_{\max}+4\|\phi\|_{\max}),\notag\\
	\eta_5&=\frac{\eta_3\|\phi\|_{\max}}{\|\phi\|_{\min}}+\frac{\eta_4}{\|\phi\|_{\min}},&&\eta_6=\frac{2\eta_4}{\|\phi\|_{\min}},\notag\\
	\eta_7&=\frac{\eta_6+(\eta_3)}{1-\eta_6}\cdot\frac{\|\phi\|_{\max}}{\|\phi\|_{\min}}+\eta_5,&&\eta_8=\frac{2\eta_4+\eta_3\|\phi\|_{\max}+\eta_6\|\phi\|_{\max}}{\|\phi\|_{\min}}.\label{eq:parameter}
\end{align}
We remark that these parameters are the same as those listed in \cite[(10)]{WZ21}. Some of them will not be used here, but are still listed for comparison.
It is worth noting that all these parameters tend to 0 as $\lambda\to 1^+$. So they are small when $\lambda$ is sufficiently close to $1$.

Next, we present some notation and useful lemmas that already established in \cite{WZ21}.
\begin{lem}[{\cite[Lemma 2.1]{WZ21}}]\label{p:a_lam}
	Assume the hypothesis (C). For any $\lambda>1$, there exists $a_\lambda\in\N$ such that
	\[T^{a_\lambda}B(x,\lambda^{-1}b_\lambda)=X.\]
\end{lem}

\begin{defn}[{\cite[Definition 2.11]{WZ21}}]\label{d:t(y)}
	Given $y\in Y$ and $n\in\N$, let $t_n(y)$ be the unique solution to equation
	\begin{equation}\label{eq:t(y)}
		\sum_{z\in T^{-n}y}e^{-t\cdot S_n(\phi+\psi)(z)}=1.
	\end{equation}
\end{defn}
The following result shows that $t_n(y)$ varies uniformly over $y\in Y$ when $n$ is large.

\begin{prop}[{\cite[Proposition 2.12]{WZ21}}]\label{p:h1}
	Assume the hypotheses (A1) and (B). Let $t<t_0$ and $\lambda>1$, and recall $\eta_2$. Then there exists an integer $N_1=N_1(t,\lambda)$ such that
	\[t_n(y)\ge t(1-\eta_2)\quad\text{whenever $n\ge N_1$ and $y\in Y$}.\]
\end{prop}
\begin{lem}[{\cite[Lemma 2.14]{WZ21}}]\label{l:ln}
	Assume the hypotheses (B) and (C), and let $e^{\|\phi\|_{\min}}>\lambda>1$, $z\in X$ and $n\in\N$. Then,
	\[T^{n+[\eta_3n]+1+a_\lambda}B(z,e^{-S_n\phi(z)}b_\lambda)=X.\]
\end{lem}
\begin{lem}\label{l:variant}
	Assume the hypothesis (B) and let $z'\in B(z,6c_1e^{-S_n\phi(z)})$. Then there exists an integer $N_2=N_2(\lambda)$ such that, whenever $n\ge N_2$,
	\begin{align*}
	|S_n\phi(z)-S_n\phi(z')|&\le \sum_{i=0}^{n-1}|\phi(T^iz)-\phi(T^iz')|<n\eta_4,\\
	|S_n\psi(z)-S_n\psi(z')|&\le \sum_{i=0}^{n-1}|\psi(T^iz)-\psi(T^iz')|<n\eta_4.
	\end{align*}
\end{lem}

\subsection{Construction of suitable subset} Note that $\cw(T,\psi)$ is written as
\[\bigcap_{k=1}^\infty\bigcup_{n=k}^\infty\bigcup_{z\in T^{-n}y_0}B(z,e^{-S_n(\phi+\psi)(z)}).\]
Let
\[E_n:=\bigcup_{z\in T^{-n}y_0}B(z,e^{-S_n(\phi+\psi)(z)}).\]
Fix $\epsilon>0$ and $t<s$. By Lemma \ref{l:ln}, we choose $1<\lambda=\lambda(t,\epsilon)\ll e^{\kappa_*}$ so that
\begin{equation}\label{cond:t}
	\max\big(t(1+\eta_7),t(1+\eta_5),t/(1-\eta_8)\big)\le\delta.
\end{equation}
Let $N\ge N_2(\lambda)$ so that Lemma \ref{l:variant} is applicable for all $n\ge N$. The goal of this section is to verify that
\begin{equation}\label{eq:goingtopr}
	\limsup_{n\to\infty}\hc^t(E_n\cap B)\apprge m(B)
\end{equation}
holds for any ball $B$ with $r(B)\le e^{-N\|\phi\|}$.
This would implies that the above estimate still holds for arbitrary ball $B$ but with a larger contant. Therefore, Corollary \ref{c:weaken} is still applicable.


Now, let us prove \eqref{eq:goingtopr}.
Fix a ball $B_0=B(z_0,r_0)$ with $r_0\le e^{-N\|\phi\|}$. Let $k$ be the unique integer such that
\[e^{-S_k\phi(z_0)}\le r_0<e^{-S_{k-1}\phi(z_0)}.\]
Since $r_0\le e^{-N\|\phi\|}$, we have
\[k\ge N=N(\lambda).\]
Apply Lemma \ref{l:ln} and write $l_k=[\eta_3k]+1+a_\lambda$, we see that
\begin{equation}\label{eq:whole}
	X=T^{k+l_k}B(z_0,e^{-S_k\phi(z_0)}b_\lambda)\subset T^{k+l_k}B(z_0,r_0).
\end{equation}
By \eqref{eq:whole}, for any $n\ge 1$ and $z^*\in T^{-n}y_0$, there exists $z\in B(z_0,e^{-S_k\phi(z_0)})$ such that
\[T^{k+l_k}z=z^*\text{ or equaivalently }z\in T^{-(k+l_k)}z^*\subset T^{-(k+l_k+n)}y_0.\]
There maybe multiple choice, but we only choose one of them. Following the teminalogy in \cite{WZ21}, we call $z$ the point in $T^{-k-l_k-n}y_0$ corresponding to $z^*\in T^{-n}y_0$. Define
\begin{equation}\label{eq:collection}
	\cf_n:=\Big\{B(z,e^{-S_{k+l_k+n}(\phi+\psi)(z)}):z^*\in T^{-n}y_0\Big\}\subset E_{k+l_k+n}\cap B(z_0,e^{-S_k\phi(z_0)}).
\end{equation}
For any $B_1=B(z,e^{-S_{k+l_k+n}(\phi+\psi)(z)})\in\cf_n$, by hypothesis (A2), one has that the distance between $B_1$ and any other balls from $\cf_n$ is at least
\begin{equation}\label{eq:dist2}
	\frac{c_2}{2}e^{-S_{k+l_k+n}\phi(z)}.
\end{equation}

\subsection{Suitable measure on subset}\label{s:suit2}
Recalling Proposition \ref{p:h1}, let $n_0\ge N_1(t,\lambda)$ be an integer such that for any $n\ge n_0$,
\begin{equation}\label{eq:condik}
	e^{\delta\cdot S_{k+l_k+n}(\phi+\psi)(z)}\cdot e^{-t\cdot S_n(\phi+\psi)(z^*)}\le 1,
\end{equation}
where $z^*\in T^{-n}y_0$ and $z\in T^{-k-l_k-n}y_0$ the point corresponding to $z^*$. This is possible, since $t<s\le \delta$ and $T^{k+l_k}z=z^*$.

Fix $n\ge n_0$. Define a probability measure $\mu$ supported on $E_{k+l_k+n}\cap B(z_0,e^{-S_k\phi(z_0)})$ by
\[\mu:=\sum_{z^*\in T^{-n}y_0}\frac{e^{-tS_n(\phi+\psi)(z^*)}}{\sum_{z^*\in T^{-n}y_0}e^{-tS_n(\phi+\psi)(z^*)}}\cdot \frac{m|_{B(z,e^{-S_{k+l_k+n}(\phi+\psi)(z)})}}{m\big(B(z,e^{-S_{k+l_k+n}(\phi+\psi)(z)})\big)},\]
where $z\in T^{-k-l_k-n}$ is the point corresponding to $z^*\in T^{-n}y_0$.

Now, we estimate the $\mu$-measure of arbitrary balls. Since the proof is quite similar to \cite{WZ21}, some complicated estimates will not be detailed here, only the key ideas are pointed out. We refer the interested reader to that paper for further details.

For any ball $B(x,r)$ with $x\in B(z_1,e^{-S_{k+l_k+n}(\phi+\psi)(z_1)})\in\cf_n$ and $r>0$. We consider four cases.

\noindent \textbf{Case 1}: $r\ge e^{-S_k\phi(z_0)}$. Then,
\begin{equation}\label{eq:case1}
	\mu\big(B(x,r)\big)\le 1\le\frac{r^t}{e^{-t\cdot S_k\phi(z_0)}}\asymp\frac{r^t}{r_0^t},
\end{equation}
where we use $r_0\asymp e^{-S_k\phi(z_0)}$ in the last inequality.

\noindent \textbf{Case 2}: $e^{-S_{k+l_k}\phi(z_1)}\le r< e^{-S_k\phi(z_0)}$. By Lemma \ref{l:variant} and recall that $l_k=[\eta_3k]+1+a_\lambda$ is negligible compared with $k$, one has
\begin{align}
	S_{k+l_k}\phi(z_1)\le S_k\phi(z_1)+l_k\|\phi\|_{\max}&\le S_k\phi(z_0)+n\eta_4+l_k\|\phi\|_{\max}\notag\\
	&\le (1+\eta_5)S_k\phi(z_0)+(1+a_\lambda)\|\phi\|_{\max}.\label{eq:var1}
\end{align}
Thus,
\begin{align}
	\mu\big(B(x,r)\big)&\le 1= \bigg(\frac{e^{-S_k\phi(z_0)}}{e^{-S_k\phi(z_0)}}\bigg)^{t(1+\eta_5)}\lesssim \bigg(\frac{e^{-\frac{1}{1+\eta_5}\cdot S_{k+l_k}\phi(z_1)}}{e^{-S_k\phi(z_0)}}\bigg)^{t(1+\eta_5)}\notag\\&
	\lesssim\frac{e^{-tS_{k+l_k}\phi(z_1)}}{r_0^{t(1+\eta_5)}}\le \frac{r^t}{r_0^{t(1+\eta_5)}},\label{eq:case2}
\end{align}
where the implied constant depends on $\lambda$ only.

\noindent \textbf{Case 3}: $e^{-S_{k+l_k+i+1}\phi(z_1)}\le r<e^{-S_{k+l_k+i}\phi(z_1)}$ for some $0\le i\le n-1$. We divide into two subcases.

\noindent \textbf{Subcase (3a)}: The case of $i\le \frac{\eta_6}{1-\eta_6}(k+l_k)$. In this case, note that $\eta_6$ is very small, and so $i$ is negligible compared with $k+l_k$, so by the same reason as \eqref{eq:var1} we have
\[\begin{split}
	S_{k+l_k+i+1}\phi(z_1)\le (1+\eta_7)S_k\phi(z_0)+\bigg(\frac{1+a_{\lambda}}{1-\eta_6}+1\bigg)\|\phi\|_{\max}.
\end{split}\]
By a computation mimicking \eqref{eq:case2}, we have
\begin{equation}\label{eq:case3a}
	\mu\big(B(x,r)\big)\le 1= \bigg(\frac{e^{-S_k\phi(z_0)}}{e^{-S_k\phi(z_0)}}\bigg)^{t(1+\eta_7)}\lesssim\frac{r^t}{r_0^{t(1+\eta_7)}}.
\end{equation}

 \noindent \textbf{Subcase (3b)}: The case of $i\ge \frac{\eta_6}{1-\eta_6}(k+l_k)$. At this point, $i$ may be large, so other strategies need to be adopted.

 Denote by $\cg$ the set of all centers of balls in $\cf_n$ (see \eqref{eq:collection}) which intersect $B(x,r)$, and then by $\cg'$ the set of all points $z^*\in T^{-n}y_0$ such that there exists a point $z\in\cg$ corresponding to $z^*$. Obviously, $\cg$ is nonempty since $z_1\in\cg$.

 Let $\ell=i-\eta_6(k+l_k+i)$.  By \cite[Lemma 5.3]{WZ21}, for distinct $z,z'\in\cg$, we have
 \[T^{k+l_k+\ell}z\ne T^{k+l_k+\ell}z'.\]
 As a result, for each $\tilde z\in T^{-(n-\ell)}y_0$, there is at most one ball intersecting $B(x,r)$, with center $z\in \cg$ and $T^{k+l_k+\ell}z=\tilde z$, and correspondingly, there is at most one point $z^*\in\cg'$ with $z^*\in T^{-\ell}\tilde z$. Denote by $\ch\subset T^{-(n-\ell)}y_0$ the set of all $\tilde z\in T^{-(k-\ell)}y_0$ for which such $z\in\cg$ and $z^*\in\cg'$ exist. Clearly, there is a one-to-one corresponding between $\ch,\cg$ and $\cg'$. For each $\tilde z\in\ch$, we fix the corresponding points $z\in\cg$ and $z^*\in\cg'$, and in this case,
 \[z^*=T^{k+l_k}z\in T^{-\ell}\tilde z.\]
 Then,
 \begin{align}
 	\mu\big(B(x,r)\big)&\le \sum_{z\in \cg}\mu\big(B(z,e^{-S_{k+l_k+n}(\phi+\psi)(z)})\big)\notag\\
 	&\le\frac{\sum_{\tilde z\in \ch}e^{-tS_n(\phi+\psi)(z^*)}}{\sum_{\underline{z}^*\in T^{-n}y_0}e^{-tS_n(\phi+\psi)(\underline{z}^*)}}.\label{eq:nude}
 \end{align}
Now we estimate the numerator and denominator of \eqref{eq:nude} respectively.

For the denominator, let $N_1=N_1(t,\lambda)$ be given by Proposition \ref{p:h1}. If $\ell\ge N_1$ then $t_\ell(\tilde z)>t$ for any $\tilde z\in\ch$. So by \eqref{eq:t(y)}
\[\sum_{\underline{z}^*\in T^{-\ell}\tilde z}e^{-tS_\ell(\phi+\psi)(\underline{z}^*)}\ge 1.\]
If $\ell\le N_1$, it holds that
\[\sum_{\underline{z}^*\in T^{-\ell}\tilde z}e^{-tS_\ell(\phi+\psi)(\underline{z}^*)}\ge e^{-t\cdot N_1\|\phi+\psi\|_{\max}}.\]
Thus
\begin{align}
	\sum_{\underline{z}^*\in T^{-n}y_0}e^{-tS_n(\phi+\psi)(\underline{z}^*)}&= \sum_{\tilde z\in T^{-(n-\ell)}y_0}\sum_{\underline{z}^*\in T^{-\ell}\tilde z}e^{-tS_n(\phi+\psi)(\underline{z}^*)}\notag\\
	&=\sum_{\tilde z\in T^{-(n-\ell)}y_0}e^{-tS_{n-\ell}(\phi+\psi)(\tilde z)}\sum_{\underline{z}^*\in T^{-\ell}\tilde z}e^{-tS_{\ell}(\phi+\psi)(\underline{z}^*)}\notag\\
	&\ge e^{-t\cdot N_1\|\phi+\psi\|_{\max}}\cdot \sum_{\tilde z\in T^{-(n-\ell)}y_0}e^{-tS_{n-\ell}(\phi+\psi)(\tilde z)},\label{eq:nu}
\end{align}
where we use $ T^\ell \underline{z}^*=\tilde z$ in the second equality.

Now let us estimate the numerator of \eqref{eq:nude}. By the positivity of $\psi$ and the fact that $\ch\subset T^{-(n-\ell)y_0}$,
\begin{align}
	\sum_{\tilde z\in \ch}e^{-tS_n(\phi+\psi)(z^*)}&=\sum_{\tilde z\in \ch}e^{-tS_{\ell}(\phi+\psi)(z^*)}\cdot e^{-tS_{n-\ell}(\phi+\psi)(T^\ell z^*)}\notag\\
	&=\sum_{\tilde z\in \ch}e^{-tS_{\ell}(\phi+\psi)(z^*)}\cdot e^{-tS_{n-\ell}(\phi+\psi)(\tilde z)}\notag\\
	&\le \max_{\tilde z\in\ch}e^{-tS_\ell\phi(z^*)}\cdot \sum_{\tilde z\in T^{-(n-\ell)}y_0}e^{-tS_{n-\ell}(\phi+\psi)(\tilde z)}\notag\\
	&\le \max_{z\in\cg}e^{-tS_\ell\phi(T^{k+l_k}z)}\cdot \sum_{\tilde z\in T^{-(n-\ell)}y_0}e^{-tS_{n-\ell}(\phi+\psi)(\tilde z)}.\label{eq:de}
\end{align}
Combining \eqref{eq:nu} and \eqref{eq:de}, we have
\begin{align}
	\mu\big(B(x,r)\big)&\lesssim\max_{z\in\cg}e^{-tS_\ell\phi(T^{k+l_k}z)}=\bigg(\frac{e^{-S_k\phi(z_0)}}{e^{-S_k\phi(z_0)}}\bigg)^{t}\cdot \max_{z\in\cg}e^{-tS_\ell\phi(T^{k+l_k}z)}\notag\\
	&\asymp\frac{1}{r_0^t}\cdot \Big(e^{-S_k\phi(z_0)}\cdot \max_{z\in\cg}e^{-S_\ell\phi(T^{k+l_k}z)}\Big)^t,\label{eq:task}
\end{align}
where the implied constant depends on $t$ and $\lambda$.

So the next task is to compare
\[e^{-S_k\phi(z_0)}\cdot \max_{z\in\cg}e^{-S_\ell\phi(T^{k+l_k}z)}\text{ with }e^{-S_{k+l_k+i}\phi(z_1)},\]
where the latter is comparable to $r$.

Fix a point $z\in\cg$. By \eqref{eq:dist2}, we have $z\in B(x,3r)$ and $z_1\in B(x,3r)$. This gives $z\in B(z_1,6r)\subset B(z_1,6e^{-S_{k+l_k+i}\phi(z_1)})$ by the range of $r$ in this case. Then by Lemma \ref{l:variant},
\[\begin{split}
	|S_\ell\phi(T^{k+l_k}z)-S_\ell\phi(T^{k+l_k}z_1)|&\le \sum_{j=0}^{\ell-1}|\phi(T^{j+k+l_k}z)-\phi(T^{j+k+l_k}z_1)|\\
	&\le \sum_{j=0}^{k+l_k+i-1}|\phi(T^{j}z)-\phi(T^{j}z_1)|\\
	&\le \eta_4(k+l_k+i).
\end{split}\]

Recall $\ell=i-\eta_6(k+l_k+i)\ge 0$. Since $l_k$ and $ \eta_6(k+l_k+i)$ are negligible in comparison to $n$ and $\ell$, respectively, and since $z_1\in B(z_0, e^{-S_k\phi(z_0)})$ and $z\in B(z_1,6e^{-S_{k+l_k+i}\phi(z_1)})$, one can see that
\[\begin{split}
	S_{k+l_k+i}\phi(z_1)\le S_k\phi(z_0)+S_\ell\phi(T^{k+l_k}z)+\eta_8S_{k+l_k+i}\phi(z_1)+(1+a_\lambda)\|\phi\|_{\max}.
\end{split}\]
Rearrange the inequality, we have
\[(1-\eta_8)S_{k+l_k+i}\phi(z_1)\le S_k\phi(z_0)+S_\ell\phi(T^{k+l_k}z)+(1+a_\lambda)\|\phi\|_{\max},\]
which implies that
\[\begin{split}
	e^{-S_k\phi(z_0)}\cdot \max_{z\in\cg}e^{-S_\ell\phi(T^{k+l_k}z)}&\lesssim \big(e^{-S_{k+l_k+i}\phi(z)}\big)^{1-\eta_8}\lesssim r^{1-\eta_8}.
\end{split}\]
This together with \eqref{eq:task} gives
\begin{equation}\label{eq:case3b}
	\mu\big(B(x,r)\big)\lesssim \frac{r^{t(1-\eta_8)}}{r_0^t}\le \frac{r^t}{r_0^{t/(1-\eta_8)}}.
\end{equation}

\noindent \textbf{Case 4}: $r<e^{-S_{k+l_k+n}}\phi(z_1)$. In this case, by \eqref{eq:t(y)} and \eqref{eq:condik} we have
\begin{align}
	\mu\big(B(x,r)\big)&=\frac{e^{-S_n(\phi+\psi)(z^*)}}{\sum_{z^*\in T^{-n}y_0}e^{-t\cdot S_n(\phi+\psi)(z^*)}}\cdot \frac{m|_{B(z,e^{-S_{k+l_k+n}(\phi+\psi)(z)})}\big(B(x,r)\big)}{m\big(B(z,e^{-S_{k+l_k+n}(\phi+\psi)(z)})\big)}\notag\\
	&\lesssim \frac{e^{-t\cdot S_n(\phi+\psi)(z^*)}}{\sum_{z^*\in T^{-n}y_0}e^{-S_n(\phi+\psi)(z^*)}}\cdot \frac{r^\delta}{e^{-\delta\cdot S_{k+l_k+n}(\phi+\psi)(z)}}\notag\\
	&\le r^\delta\le \frac{r^t}{r_0^t}.\label{eq:case4}
\end{align}

Combining the estimates presented in \eqref{eq:case1}, \eqref{eq:case2}, \eqref{eq:case3a}, \eqref{eq:case3b} and \eqref{eq:case4}, we have
\[\mu\big(B(x,r)\big)\lesssim r^t\cdot \Big(\min\big(r_0^t, r_0^{t(1+\eta_7)},r_0^{t(1+\eta_5)},r_0^{t/(1-\eta_8)}\big)\Big)^{-1}.\]
By our choice of $\lambda$ (see \eqref{cond:t}), the above inequality becomes
\[\mu\big(B(x,r)\big)\lesssim r^t\cdot r_0^{-\delta}\lesssim r^t\cdot |B(z_0,r_0)|^{-\delta}.\]
Finally, by the mass distribution principle, one has
\[\hc^t(E_{k+l_k+n}\cap B)\apprge m(B).\]
Note that this holds for all sufficiently large $n$, and the proof is complete.
\subsection*{Acknowledgments}
The author would like to thank Professor Lingmin Liao for many useful suggestions.

\end{document}